\theoremstyle{plain}
\newtheorem{thm}{Theorem}[section]
\newtheorem{lemma}[thm]{Lemma}
\newtheorem{prop}[thm]{Proposition}
\newtheorem{cor}[thm]{Corollary}
\numberwithin{equation}{section}
\numberwithin{figure}{section}
\theoremstyle{definition}
\newtheorem{definition}[thm]{Definition}
\newtheorem{example}[thm]{Example}
\newtheorem{remark}[thm]{Remark}
\newtheorem{examples}[thm]{Examples}
\newtheorem{obs}[thm]{Observation}
\newtheorem{claim}[thm]{Claim}
\renewcommand{\div}[1]{\mathrm{div}_{#1}}
\newcommand{\G}{\Gamma}
\newcommand{\Z}{\mathbb{Z}}
\newcommand{\R}{\mathbb{R}}
\newcommand{\avoid}[2]{d^{\mathrm{av}}(#1,#2)}
\newcommand{\Cayley}[1]{\mathcal {C}_#1}
\newcommand{\cfs}{$\mathcal{CFS}$}
\newcommand{\CAT}{\operatorname{CAT}}
\newcommand{\cS}{\mathcal{S}}
\newcommand{\la}{\langle}
\newcommand{\ra}{\rangle}
\newcommand{\s}{\sigma}
\newcommand{\diam}{\operatorname{diam}}
\newcommand{\hide}[1]{}
\title{Divergence in right-angled Coxeter groups}
\author{Pallavi Dani and Anne Thomas}
\thanks{The first author was supported in part by Louisiana Board of Regents 
Support Fund Contract LEQSF(2011-14)-RD-A-06
and NSF Grant No.~DMS-1207868.  This research of the second author was supported in part by ARC Grant No.~DP110100440, and the second author was also supported in part by an Australian Postdoctoral Fellowship.}
\begin{document}
\maketitle

\begin{abstract}  Let $W$ be a $2$-dimensional right-angled Coxeter group.  We characterise such $W$ with linear and quadratic divergence, and construct right-angled Coxeter groups with divergence polynomial of arbitrary degree.  Our proofs use the structure of walls in the Davis complex.
\end{abstract}

\section{Introduction}
The divergence of a pair of geodesics is a classical notion related to curvature.  
Roughly speaking, given a pair of geodesic rays emanating from a basepoint, their divergence 
measures, as a function of~$r$, the length of a shortest ``avoidant'' path connecting their time-$r$ points.  A path is \emph{avoidant} if it stays at least distance $r$ away from the basepoint. 
In~\cite{gersten-quadratic}, Gersten used this idea to define a quasi-isometry invariant of spaces, 
also called divergence.  We recall the definitions of both notions of divergence in Section~\ref{sec:divergence}.   

The divergence of every pair of geodesics in Euclidean space is a linear function, and it follows from Gersten's definition that any group quasi-isometric to Euclidean space has linear divergence.  
In a $\delta$-hyperbolic space, any pair of non-asymptotic rays diverges exponentially; thus the divergence of any hyperbolic group is exponential.  
In symmetric spaces of non-compact type, the divergence is either linear or exponential, and  
Gromov suggested in~\cite{gromov} the same should be true in 
 $\CAT(0)$ spaces.

Divergence has been investigated for many important groups and spaces, 
and contrary to Gromov's expectation, quadratic divergence is common.  
Gersten first exhibited quadratic divergence for certain $\CAT(0)$ spaces
in~\cite{gersten-quadratic}.  He 
then proved in~\cite{gersten-3mfld} 
that the divergence of the fundamental 
group of a closed geometric $3$-manifold is either linear, quadratic or exponential, and 
characterised the (geometric) ones with quadratic divergence as the fundamental groups of graph manifolds.  
Kapovich--Leeb~\cite{kapovich-leeb} showed that all graph manifold groups have quadratic divergence. More recently, Duchin--Rafi~\cite{duchin-rafi} established that 
the divergence of Teichm\"uller space and the mapping class group is quadratic (for mapping class groups this was also obtained by Behrstock in~\cite{behrstock}).
Dru\c{t}u--Mozes--Sapir~\cite{drutu-mozes-sapir} have conjectured that the divergence of lattices in higher 
rank semisimple Lie groups is always linear, and proved this conjecture in some cases.  
Abrams et al~\cite{abddy} and independently Behrstock--Charney~\cite{behrstock-charney} have shown that if $A_\G$ is the right-angled Artin group associated to a graph $\G$, 
the group $A_\G$ has either linear or quadratic divergence, and its divergence is linear if and only if 
$\G$ is (the $1$-skeleton of) a join.

In this work we study the divergence of $2$-dimensional right-angled Coxeter groups.  Our first main result is Theorem~\ref{thm:linear and quadratic} below, which characterises such groups with linear and quadratic divergence in terms of their defining graphs.  This result can be seen as a step in the quasi-isometry classification of (right-angled) Coxeter groups, about which very little is known.  

We note that by~\cite{davis-jan}, every right-angled Artin group is a finite index subgroup of, and therefore quasi-isometric to, a right-angled Coxeter group.
However, in contrast to the setting of right-angled Artin groups, where one sees only linear and quadratic divergence, even the class of $2$-dimensional right-angled Coxeter groups exhibits a greater variety of divergence functions.  For example, there exist 2-dimensional right-angled Coxeter groups that are hyperbolic, and therefore have exponential divergence.  
Our second main result provides further evidence of this phenomenon: in Theorem~\ref{thm:all degrees} below, we construct right-angled Coxeter groups with divergence polynomial of any degree.

Given a finite simplicial graph $\G$, the associated \emph{right-angled Coxeter group} $W_\G$ has generating set $S$ the vertices of $\G$, and relations $s^2 = 1$ for all $s \in S$ and $st = ts$ whenever $s$ and $t$ are adjacent vertices.   
We restrict attention to~$W_\G$ one-ended and of dimension $2$; equivalently, $\G$ is connected,  triangle-free and has no separating vertices or edges.  The group $W_\G$ acts geometrically on its Davis complex $\Sigma_\G$.  As $\Sigma_\G$ is a $\CAT(0)$ square complex, $W_\G$ is a $\CAT(0)$ group.  
We investigate divergence by considering geodesics and paths in the Cayley graph of $W_\G$ with respect to the generating set $S$.  This Cayley graph may be identified with the $1$-skeleton of the Davis complex $\Sigma_\G$, and we use many properties of walls in the Davis complex to determine upper and lower bounds on lengths of avoidant paths.   
See Section~\ref{sec:coxeter and davis} for details and further background on $W_\G$ and $\Sigma_\G$, including references.

By Moussong's Theorem~\cite[Corollary 12.6.3]{davis-book}, $W_\G$ is hyperbolic if and only if $\G$ has no embedded cycles of length four.   
In order to investigate divergence for $W_\G$ not hyperbolic, we consider the set of embedded four-cycles in $\G$.  Each such four-cycle induces a family of isometrically embedded flats in $\Sigma_\G$.  In Section~\ref{sec:linear and quadratic} we define an explicit, easy-to-check condition, which we call \cfs, on the graph $\G$.  If $\G$ is \cfs\ then $\Sigma_\G$ has a distinguished collection of flats coming from a specific class of four-cycles in $\G$, with these flats intersecting along infinite bands, such that each generator of $W_\G$ is in the four-cycle for at least one such flat.  

\begin{thm}\label{thm:linear and quadratic}  Let $\G$ be a finite, simplicial, connected, triangle-free graph which has no separating vertices or edges.  Let $W_\G$ be the associated right-angled Coxeter group.  
\begin{enumerate}
\item\label{part:linear} The group $W_\G$ has linear divergence if and only if $\G$ is a join.
\item\label{part:quadratic} The group $W_\G$ has quadratic divergence if and only if $\G$ is \cfs\ and is not a join.
\end{enumerate}
\end{thm}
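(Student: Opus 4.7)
The plan is to decompose Theorem~\ref{thm:linear and quadratic} into four separate implications: (i) if $\G$ is a join then $W_\G$ has linear divergence; (ii) if $\G$ is \cfs\ then $W_\G$ has at most quadratic divergence; (iii) if $\G$ is not a join then $W_\G$ has divergence at least quadratic; and (iv) if $\G$ is neither \cfs\ nor hyperbolic, then $W_\G$ has divergence strictly greater than quadratic. Part (1) of the theorem then follows from (i) together with (iii), and part (2) from (ii) and (iii) for the $\Leftarrow$ direction and from (i) and (iv) for the $\Rightarrow$ direction; the hyperbolic case, in which $\G$ has no embedded four-cycle, contributes exponential divergence by Moussong's theorem, which handles the remaining possibility.

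For (i), write $\G = \G_1 * \G_2$; the triangle-free and no-separating-vertex hypotheses force each $\G_i$ to consist of at least two pairwise non-adjacent vertices, so each $W_{\G_i}$ is infinite and $W_\G \cong W_{\G_1} \times W_{\G_2}$ is a direct product of two infinite finitely generated groups. Linear avoidant paths are then produced in the standard way by routing through a ``perpendicular'' factor. For (ii), I would exploit the distinguished \cfs\ collection of flats directly: given two points at distance roughly $r$ from the identity in the Cayley graph of $W_\G$, each lies on some flat in the collection, so I travel along a flat for $O(r)$ steps to reach an infinite intersection band, switch flats, and iterate. Because every generator appears in a defining four-cycle of some flat and consecutive flats meet in infinite bands, at most $O(r)$ transitions are needed, yielding total path length $O(r^2)$.

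Parts (iii) and (iv) are the lower bounds, and I expect (iv) to be the principal obstacle. The unifying approach is a wall-counting argument in $\Sigma_\G$: for a suitably chosen pair of geodesic rays, any avoidant path connecting their time-$r$ points must cross a certain family of walls, with all crossings constrained to occur outside the ball of radius $r$, so each crossing contributes $\Omega(r)$ to the length. For (iii), the non-join hypothesis is equivalent to the complement graph $\bar\G$ being connected, and this combinatorial connectivity should translate into enough ``independent'' wall crossings to force $\Omega(r^2)$ total length. For (iv), the combinatorial failure of \cfs\ must be converted into a geometric obstruction: a generator excluded from the \cfs\ collection, or more generally a component of the four-cycle graph that fails to cover every generator, should prevent efficient flat-to-flat chaining and force any avoidant path to accumulate wall crossings at a rate producing a polynomial lower bound of degree $\geq 3$. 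Making this conversion precise is the main difficulty, since it requires pinning down exactly how the absence of \cfs\ forces every efficient transition between flats to pass through a region where walls pile up, inflating the length beyond quadratic; this is where the careful analysis of the wall structure of $\Sigma_\G$ advertised in the introduction will be essential.
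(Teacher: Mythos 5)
Your logical decomposition is correct, and items (i)--(iii) line up with the paper's proof: (i) is the direct-product/geodesic-extension argument (the paper cites the analogous result for RAAGs), (ii) is the flat-chaining bound via infinite intersection bands (the paper implements this by cutting a geodesic ray into ``pieces'' each supported on a single four-cycle and inducting on the number of pieces), and (iii) is the wall-counting lower bound built from a closed walk visiting every vertex of the complement graph $\G^c$ (the paper's Definition~\ref{def:gamma} and Proposition~\ref{prop:quadratic-lower}). One small inefficiency: the separate treatment of the hyperbolic case in (iv) is unnecessary. The paper's cubic lower bound for non-\cfs\ $\G$ is a pure wall/van~Kampen argument that never invokes flats, so it applies uniformly whether or not $\G$ contains any four-cycles; hyperbolicity simply makes $\G^4$ empty, hence non-\cfs, and the same argument runs.

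The genuine gap is in (iv), and you correctly flag it as ``the principal obstacle'' but then leave it as a wish (``should prevent efficient flat-to-flat chaining and force wall crossings to pile up'') rather than a mechanism. The paper's mechanism is substantially different from anything your sketch gestures at. It first reduces the cubic bound for the periodic geodesic $\gamma$ to a \emph{quadratic} lower bound $\div{\alpha,\beta}(r)\gtrsim r^2$ for a particular pair $\alpha,\beta$ running along parallel walls separated by one period of $w$ (Lemma~\ref{lem:vanKampen}), then sums these over roughly $n$ translates to get $\gtrsim n^3$. The quadratic lemma itself is proved via a van~Kampen diagram $D$ with boundary $\alpha\cup\eta\cup\beta$: one passes to the dual cell structure whose $1$-skeleton is made of the walls of $D$, slices $D$ into nested strips $S_i$, shows $\ell(\eta)\geq\ell(S_i)$ by counting transverse walls, and proves $\ell(S_i)\gtrsim i^2$ via a propagation argument with ``large'' (five-or-more-sided) and ``skew'' interior cells. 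The non-\cfs\ hypothesis enters at exactly one point: if some strip consisted entirely of squares, the incidence pattern of its walls would reconstruct a \cfs\ subgraph of $\G$ on the full vertex set, a contradiction; hence each strip contains a large cell, and large cells force the skew-cell count (and hence the strip length) to grow. Nothing in your proposal identifies the van~Kampen/strip device or the large-cell argument, and without some replacement for it the $\Rightarrow$ direction of part~(2) is unproven.
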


Note that part~\eqref{part:linear} is equivalent to saying that $W_\G$ has linear divergence if and only if it is reducible, since for $\G$ triangle-free, $W_\G$ is reducible if and only if $\G$ is a join.  Our proof of part~\eqref{part:linear} 
is similar to that of the corresponding result for $A_\G$ in~\cite{abddy}.

To establish a quadratic upper bound on divergence when the graph $\G$ is \cfs, we construct,
given a pair of geodesic segments 
based at a common point, an avoidant path between their endpoints which travels only in flats from the distinguished collection of flats guaranteed by the \cfs\ condition.  Since the divergence within a flat is linear, the quadratic upper bound comes from showing that this path only needs to pass through linearly many flats.  
As pointed out by the referee,
this quadratic upper bound could also be obtained using the thickness machinery developed by Behrstock--Dru\c{t}u~\cite{behrstock-drutu}. (See Remark~\ref{rmk:thick-cfs}.)

The more delicate direction of part~\eqref{part:quadratic} of Theorem~\ref{thm:linear and quadratic} is proving that \cfs\ graphs are 
exactly the class of graphs for which there is a quadratic upper bound on divergence.  We in fact establish a cubic lower bound on divergence when $\G$ is not \cfs.   
To obtain lower bounds on the lengths of avoidant paths, we consider van Kampen diagrams 
whose boundaries consist of a pair of geodesic segments with common basepoint and an avoidant path between their endpoints.  
The fact that the defining graph is not \cfs\ has certain implications on the cell-structure of the van Kampen diagram, which force a lower bound on the length of its boundary (and therefore of the avoidant path).

In contrast with the classes of groups discussed above, right-angled Coxeter groups may have divergence other than linear, quadratic or exponential.  We prove:

\begin{thm}\label{thm:all degrees}  For all $d \geq 1$, there is a right-angled Coxeter group $W_d$ with divergence polynomial of degree $d$.
\end{thm}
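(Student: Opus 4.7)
The plan is to produce a sequence of graphs $\G_1,\G_2,\ldots$ such that $W_d := W_{\G_d}$ has divergence polynomial of degree exactly $d$. The cases $d=1$ and $d=2$ are handled immediately by Theorem~\ref{thm:linear and quadratic}: take $\G_1$ to be any join, say the $4$-cycle $C_4$; and $\G_2$ to be any \cfs\ graph that is not a join, for instance two $4$-cycles glued along a shared edge.

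For $d \geq 3$, I would construct $\G_d$ from $\G_{d-1}$ by a gluing operation that attaches multiple copies of $\G_{d-1}$ along carefully chosen subgraphs (for instance along common $4$-cycles, or along pairs of vertices joined by a $4$-cycle) in such a way that $\G_d$ remains finite, simplicial, connected, triangle-free, and has no separating vertex or edge, while the Davis complex $\Sigma_{\G_d}$ decomposes ``hierarchically'' into isometrically embedded copies of $\Sigma_{\G_{d-1}}$ arranged in a tree-like pattern along a distinguished collection of $\Z^2$-flats. The gluing must be arranged so that adjacent copies of $\Sigma_{\G_{d-1}}$ share only a flat or quasi-line coming from the generators lying in the gluing subgraph, and (for $d \geq 3$) so that $\G_d$ itself is not \cfs, ensuring the divergence is strictly superquadratic.

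The upper bound $\mathrm{div}(W_d) \preceq r^d$ then follows by constructing an explicit avoidant path: given two geodesic segments of length $r$ from a basepoint, zig-zag through $O(r)$ of the embedded copies of $\Sigma_{\G_{d-1}}$ along the distinguished flat, spending at most $O(r^{d-1})$ inside each by the inductive upper bound applied to $W_{d-1}$, and paying only linearly on the flats that interpolate between consecutive copies. This gives a total avoidant path length of $O(r^d)$.

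The matching lower bound $\mathrm{div}(W_d) \succeq r^d$ is the main obstacle. I would iterate the van Kampen diagram and wall-combinatorics techniques used in the proof of the cubic lower bound in the non-\cfs\ direction of Theorem~\ref{thm:linear and quadratic}\eqref{part:quadratic}. Concretely, any disk diagram whose boundary consists of two long geodesic segments based at a common point together with a candidate avoidant path between their endpoints is cut by walls of $\Sigma_{\G_d}$ into subdiagrams supported on the copies of $\Sigma_{\G_{d-1}}$; each such subdiagram must contain the trace of an avoidant path in $\Sigma_{\G_{d-1}}$ of length $\Omega(r^{d-1})$ by the inductive lower bound, and the tree-like arrangement of the copies forces $\Omega(r)$ of them to be traversed, yielding an overall lower bound of $\Omega(r^d)$. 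The delicate technical step, where the argument is most likely to require care, is choosing the gluing so that walls enter and leave each copy of $\Sigma_{\G_{d-1}}$ transversely along the shared interface, preventing shortcuts across the gluing and allowing the inductive hypothesis on $W_{d-1}$ to be invoked with constants that do not degenerate under iteration.
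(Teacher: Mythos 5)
Your high-level plan — build a family $\G_d$ by induction, then prove matching $r^d$ upper and lower bounds for $W_{\G_d}$ by induction on $d$, reducing each step to divergence in subspaces carrying $W_{\G_{d-1}}$-structure — is the same overall strategy as the paper's. But the two key ingredients you sketch differ genuinely from what the paper does, and the lower bound as you describe it has a real gap.

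On the construction: you propose forming $\G_d$ by attaching several copies of $\G_{d-1}$ along shared subgraphs so that $\Sigma_{\G_d}$ decomposes hierarchically into copies of $\Sigma_{\G_{d-1}}$ glued along flats. This is essentially the amalgamated-product picture of Behrstock--Dru\c{t}u, and the paper explicitly notes its groups $W_d$ are \emph{not} of that form. Instead, the paper's $\G_d$ is a single small graph with $2(d+1)$ vertices $a_0,\ldots,a_d,b_0,\ldots,b_d$, obtained from $\G_{d-1}$ by adjoining just two new vertices $a_d$ and $b_d$ with prescribed adjacencies, so that $\G_{d-1}$ sits inside $\G_d$ as an induced subgraph and $\Cayley{\G_{d-1}}$ embeds isometrically in $\Cayley{\G_d}$. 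Both constructions could plausibly work, but your decomposition of $\Sigma_{\G_d}$ into copies of $\Sigma_{\G_{d-1}}$ meeting along flats is not something you can take for granted from a graph amalgamation; it requires checking and is in tension with the requirement that $\G_d$ have no separating edge, since you need the interfaces to be thin enough (a flat) yet not disconnect the graph.

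On the lower bound: you propose iterating the van Kampen / strip machinery used for the cubic lower bound in the non-\cfs\ case. The paper does not do this for the polynomial lower bound (Proposition~\ref{prop:poly-lower}); the van Kampen argument is used only once, in Lemma~\ref{lem:vanKampen}. The polynomial lower bound is instead proved by direct wall counting in the spirit of Proposition~\ref{prop:quadratic-lower}: one picks out $\Omega(r)$ walls of type $a_d$ or $b_d$ crossed by $\alpha$, runs geodesics $\mu_i$ and $\nu_i$ along those walls and along adjacent walls $H_i'$, observes that the pair $(\mu_i,\nu_i)$ satisfies the inductive hypotheses for $d-1$, and converts the inductive divergence bound on $(\mu_i,\nu_i)$ into a bound on $\ell(\eta_i)$ using the $\CAT(0)$ projection observation at the end of Section~\ref{sec:divergence}. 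The step in your proposal that does not yet work is the claim that ``each subdiagram supported on a copy of $\Sigma_{\G_{d-1}}$ must contain the trace of an avoidant path of length $\Omega(r^{d-1})$'': avoidance is measured from a fixed basepoint, and an arbitrary subdiagram has no reason to inherit a pair of based geodesics plus an $r$-avoidant path between their endpoints. The paper sidesteps this by never cutting the disk diagram at all in the polynomial case; it constructs the required pairs of geodesics explicitly and shows the relevant sub-arcs of $\eta$ avoid balls centred at the new basepoints $g_i x_i$ because $\eta$ avoids $B(e,r)$ and $|g_i x_i|\le r/2$. If you want to pursue the diagram-cutting route you would need to supply an argument of this kind, and it is not clear it simplifies anything.
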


\noindent In~\cite{gersten-3mfld}, Gersten asked 
whether polynomial divergence of degree $\geq 3$ is possible for $\CAT(0)$ groups.  
Macura~\cite{macura} constructed a family of $\CAT(0)$ groups $G_d$ with divergence polynomial of degree $d \geq 2$.  These groups $G_d$ are the same as the ``hydra groups" investigated by Dison--Riley~\cite{dison-riley}. 
Behrstock--Dru\c{t}u~\cite{behrstock-drutu} subsequently obtained examples of $\CAT(0)$ groups $H_d$ with divergence polynomial of any degree $d \geq 2$, with $H_d$ the amalgamated free product of two copies of $H_{d-1}$ along an infinite cyclic subgroup.  The groups $W_d$ that we construct are not of this form.  Most recently, Behrstock--Hagen~\cite{behrstock-hagen} used a similar construction to that of~\cite{behrstock-drutu} to obtain fundamental groups of $\CAT(0)$ cube complexes with divergence polynomial of any degree.
Theorem~\ref{thm:all degrees} provides an answer to Gersten's question within a well-known class of 
$\CAT(0)$ groups.

We prove Theorem~\ref{thm:all degrees} in Section~\ref{sec:higher-degree}, where we inductively construct a family of graphs $\G_d$ such that $W_d = W_{\G_d}$ has divergence polynomial of degree $d$.  
We prove upper and lower bounds on the divergence of $W_d$ 
in Propositions~\ref{prop:poly-upper} and~\ref{prop:poly-lower} respectively.
As discussed in Remark~\ref{rmk:thick}, the upper bound for the divergence of $W_d$ could also be derived from thickness considerations. 
Our arguments to obtain the lower bounds on divergence are considerably shorter than Macura's.  

After proving Theorem~\ref{thm:all degrees}, we noticed that Macura's group $G_d$ and our group $W_d$ both act geometrically on a $\CAT(0)$ square complex with all vertex links 
equal to the graph~$\G_d$ (namely the Cayley $2$-complex for $G_d$, and the Davis complex for $W_d
$, respectively).  A natural question is thus whether $G_d$ and $W_d$ are commensurable.  
Since our techniques for addressing this question are quite different to those used to prove 
Theorems~\ref{thm:linear and quadratic} and~\ref{thm:all degrees}, we discuss this question in 
Appendix~\ref{app:macura}.  We first show in Proposition~\ref{prop:commens}, using covering theory and complexes of groups, that 
$G_2$ and $W_2$ are commensurable.   While attempting to prove commensurability of $G_d$ 
and $W_d$ for $d > 2$, we were surprised to discover that their corresponding  square complexes 
are not in fact isometric (see Corollary~\ref{cor:not isometric}).  
Hence the strategy of finding a common finite cover to establish commensurability fails.  We do not 
know whether $G_d$ and $W_d$ are commensurable or even quasi-isometric for~$d > 2$.

\subsection*{Acknowledgements}  We thank the University of Sydney for travel support.  We also thank the organisers of the 2012 Park City Mathematics Institute Summer Program on Geometric Group Theory, during which part of this work was undertaken, and an anonymous referee for helpful comments.

\section{Divergence}\label{sec:divergence}
In this section we recall Gersten's definition of divergence as a quasi-isometry invariant 
from~\cite{gersten-quadratic}.  We restrict to spaces which are one-ended.  

Let $(X, d)$ be a one-ended geodesic metric space.  For $p \in X$, let $S(p,r)$ and $B(p,r)$ denote the sphere and open ball of radius $r$ about $p$.  A path in $X$ is said to be \emph{$(p,r)$-avoidant} if it lies in 
$X-B(p,r)$.  Then, given a pair of points $x,y \in X-B(p,r)$, the \emph{$(p,r)$-avoidant distance} $d^{\mathrm{av}}_{p,r}(x,y)$
between them is the infimum of the lengths of all $(p,r)$-avoidant paths connecting $x$ and $y$.  

Now fix a basepoint $e\in X$.  In the rest of the paper we will write \emph{$r$-avoidant} or simply \emph{avoidant} for 
$(e,r)$-avoidant, and  
$\avoid{x}{y}$ for $d^{\mathrm{av}}_{e,r}(x,y)$, indicating the basepoint and radius only if they differ from 
$e$ and $r$.  

For each $0 <\rho \le 1$, let 
$$
\delta_{\rho}(r) = \sup_{x, y \in S(e,r)} d^{\mathrm{av}}_{\rho r}(x,y).
$$
Then the \emph{divergence} of $X$ is defined to be the resulting collection of functions 
$$
\div{X} = \{\delta_{\rho}
\,|\, 0 < \rho
\le 1 \}.
$$  

The spaces $X$ that we will consider (Cayley graphs of right-angled Coxeter groups)  have the geodesic extension property (i.e.~any finite geodesic segment can be extended to an infinite geodesic ray).  It is not hard to show that in a metric space $X$ with this property, 
$\delta_{\rho} \simeq \delta_{1}$ for all $0 < \rho \le 1$, 
 where $\simeq$ is the equivalence on functions generated by:
 $$
 f \preceq g \iff \; \exists \; C>0 \text{ such that } f(r) \le Cg(Cr+C) + Cr+C.  
 $$
Thus in this paper, we think of $\div{X}$ as a function of $r$, defining it to be equal to $\delta_1$.   We say that the divergence of $X$ is \emph{linear} if $\div{X}(r)\simeq r$, \emph{quadratic} if $\div{X}(r)\simeq  r^2$, and so on.  

The divergence of $X$ is then, up to the relation $\simeq$, a quasi-isometry invariant which is independent of the chosen basepoint (see~\cite{gersten-quadratic}).    Thus it makes sense to define the divergence of a finitely generated group to be the divergence of one of its Cayley graphs.  

The divergence of 
a pair of geodesic rays $\alpha$ and $\beta$ with the same initial point $p$, or of a bi-infinite geodesic $\gamma$ 
are defined as, respectively,  
$$
\div{\alpha, \beta}(r) = d^{\mathrm{av}}_{p,r}(\alpha(r), \beta(r)) \text{ and } 
\div{\gamma}(r) = d^{\mathrm{av}}_{\gamma(0),r}(\gamma(-r), \gamma(r)).
$$

Note that in a geodesic metric space $X$, if $\div{\alpha, \beta}(r) \leq f(r)$ for all pairs of geodesic rays in $X$ with initial point $e$, then 
$\div{X}(r) \leq f(r)$. On the other hand, if there exists a pair of geodesic rays (or a bi-infinite geodesic) such 
that $\div{\alpha, \beta}(r) \succeq f(r)$, then $\div{X}(r) \succeq f(r)$.  
Finally, if $X$ is CAT(0) and $\div{\alpha, \beta}(r) \ge f(r)$,
then, using the fact that projections do not increase
distances, one can show that 
$d^{\mathrm{av}}_{p,r}(\alpha(s), \beta(t)) \ge f(r)$ for any $s, t \geq r$.  
These observations will be used repeatedly in proofs.  

\section{Coxeter groups and the Davis complex}\label{sec:coxeter and davis}
In this section, we recall definitions and results concerning right-angled Coxeter groups 
(Section~\ref{sec:coxeter}) and their associated Davis complexes (Section~\ref{sec:davis}).  
Section~\ref{sec:walls} then gives a careful discussion of walls in the Davis complex. 
Section~\ref{sec:paths} 
discusses paths in the Cayley graph of $W_\G$ and their 
relationship to walls in the Davis complex.  We mostly follow Davis' book~\cite{davis-book}.

\subsection{Right-angled Coxeter groups}\label{sec:coxeter}

Let $\G$ be a finite simplicial graph with vertex set $S$ and let $W_\G$ be the associated right-angled Coxeter group, as defined in the introduction.  The group $W_\G$ is \emph{reducible} if $S$ can be written as a disjoint union $S_1 \sqcup S_2$ of nonempty subsets such that $W_1:=\langle S_1 \rangle$ commutes with $W_2 := \langle S_2 \rangle$, in which case $W = W_1 \times W_2$.   

In this paper we restrict to $\G$ triangle-free.  Then it is easy to see that $W_\G$ is reducible if and only if $\G$ is a join (i.e.~a complete bipartite graph).   Also, with this assumption,  $W_\G$ is one-ended if and only if $\G$ is connected and has no separating vertices or edges (see Theorem 8.7.2 of~\cite{davis-book}).

Given $T \subseteq S$, the subgroup $W_T:= \langle T \rangle$ of $W_\G$ is called a \emph{special subgroup}.    By convention, $W_\emptyset$ is the trivial group.   If $\Lambda$ is an induced subgraph of $\G$ with vertex set $T$, we may write $W_\Lambda$ for the special subgroup $W_T$.  Denote by~$C_2$ the cyclic group of order $2$ and by $D_\infty$ the infinite dihedral group.  Then for each $s \in S$, the special subgroup $W_{\{s\}}$ is isomorphic to $C_2$.   If $s$ and $t$ are adjacent vertices, then $W_{\{s,t\}} \cong C_2 \times C_2$, while if $s$ and $t$ are non-adjacent vertices, we have $W_{\{s,t\}} \cong D_\infty$.

\begin{example}\label{ex:fourcycle}
Suppose
$T = \{ s,t,u,v\} \subset S$ is such that $s$, $t$, $u$ and $v$ are, in cyclic order, the vertices of an embedded four-cycle in $\G$.  Then $W_T$ is reducible with $$W_T = W_{\{s,u\}} \times W_{\{t,v\}} \cong D_\infty \times D_\infty.$$  Now suppose $T_1$ and $T_2$ are distinct subsets of $S$ such that $T_1 \cap T_2 = \{ s,t,u\}$, with $s$ and $u$ both adjacent to $t$. 
Since $\G$ is triangle-free, this implies that $s$ and $u$ are not connected by an edge.  Then $$ W_{T_1 \cap T_2} = W_{\{ s, u \}} \times W_{\{ t\}} \cong D_\infty \times C_2$$ and 
$W_{T_1 \cup T_2}$ splits as the amalgamated free product $$W_{T_1 \cup T_2} = W_{T_1} *_{W_{T_1 \cap T_2}} W_{T_2} \cong W_{T_1} *_{D_\infty \times C_2} W_{T_2}.$$ 
\end{example}

A special subgroup $W_T$ is said to be a \emph{spherical} special subgroup if $W_T$ is finite.  The set of \emph{spherical subsets} of $S$, denoted $\cS$, is the set of subsets $T \subseteq S$ such that $W_T$ is spherical.  (The reason for the terminology ``spherical" is that if $W_T$ is finite, then $W_T$ acts as a geometric reflection group on the unit sphere in $\mathbb{R}^{|T|}$; see Theorem 6.12.9 of \cite{davis-book}.)   It follows from the paragraph before Example~\ref{ex:fourcycle} that for $\G$ triangle-free, the only spherical subsets of~$S$ are the empty set, the sets $\{s \}$ for $s \in S$, and the sets $\{s,t\}$ where $s$ and $t$ are adjacent vertices.  The corresponding spherical special subgroups of $W$ are isomorphic to the trivial group, $C_2$, and $C_2 \times C_2$ respectively.

A \emph{word} in the generating set $S$ is a finite sequence $\mathbf{s} =
(s_1,\ldots,s_k)$ where each $s_i \in S$.  We denote by $w(\mathbf{s})= s_1
\cdots s_k$ the corresponding element of $W$.  The \emph{support} of a word $\mathbf{s}$ is the set of generators which appear in $\mathbf{s}$.  A word $\mathbf{s}$ is said to be \emph{reduced} if the element
$w(\mathbf{s})$ cannot be represented by any shorter word, and a word $\mathbf{s}$ is \emph{trivial} if $w(\mathbf{s})$ is the trivial element.  We will later by abuse of notation write $s_1 \cdots s_k$ for both words and group elements.  A word $\mathbf{s}$ in the generating set $S$ of a right-angled Coxeter group is reduced if and only if it cannot be shortened by a sequence of operations of either deleting a subword of the form $(s,s)$, with $s \in S$, or replacing a subword $(s,t)$ such that $st = ts$ by the subword $(t,s)$. (This is a special case of Tits' solution to the word problem for Coxeter groups; see Theorem 3.4.2 of~\cite{davis-book}.)  

\subsection{The Davis complex}\label{sec:davis}

From now on, $\G$ is a finite, simplicial, connected, triangle-free graph with no separating vertices or edges, and $W=W_\G$ is the associated right-angled Coxeter group.  In this section, we discuss the Davis complex for $W$.  

By our assumptions on $\G$, we may define the \emph{Davis complex} $\Sigma = \Sigma_\G$ to be the Cayley $2$-complex for the presentation of $W_\G$ given in the introduction, in which all disks bounded by a loop with label $s^2$ for $s \in S$ have been shrunk to an unoriented edge with label $s$.  Then the vertex set of $\Sigma$ is $W_\G$ and the $1$-skeleton of $\Sigma$ is the Cayley graph $\Cayley\G$ of $W$ with respect to the generating set $S$.  Since all relators in this presentation other than $s^2 = 1$ are of the form $stst = 1$, $\Sigma$ is a square complex.  We call this cellulation of $\Sigma$ the \emph{cellulation by big squares}, with the \emph{big squares} being the $2$-cells.  Note that the link of each vertex in this cellulation is the graph $\G$.  

We next define the \emph{cellulation by small squares} of $\Sigma$ to be the first square subdivision of the cellulation by big squares, with the \emph{small squares} being the squares obtained on subdividing each big square into four.  We will use both of these cellulations in our proofs.

We now assign \emph{types} $T \in \cS$ to the vertices of the cellulation by small squares.   If $\s$ is also a vertex of the cellulation by big squares, then $\s$ has type $\emptyset$.  If $\s$ is the midpoint of an edge in the cellulation by big squares, then since $\Cayley\G$ is the $1$-skeleton of the cellulation by big squares, $\s$ is the midpoint of an edge connecting $g$ and $gs$ for some $g \in W$ and $s \in S$, and we assign type $\{ s\} \in \cS$ to $\s$.  Finally if $\s$ is the centre of a big square, then $\s$ is assigned type $\{s, t \} \in \cS$, where two of the vertices adjacent to $\s$ have type $\{ s\}$, and two of the vertices adjacent to $\s$ have type $\{t\}$.  

Consider $\Sigma$ with the cellulation by small squares.  The group $W$ naturally acts on the left on $\Sigma$, preserving types, so that the stabiliser of each vertex of type $T \in \cS$ is a conjugate of the finite group $W_T$.  Let $\s$ be the vertex of type $\emptyset$ corresponding to the identity element of $W$.  The \emph{base chamber} $K$ is the union of the set of small squares which contain $\s$.  Any translate of $K$ by an element of $W$ is called a \emph{chamber}.  For each $T \in \cS$, we denote by $\s_T$ the unique vertex of type $T \in \cS$ in the base chamber.  The quotient of $\Sigma$ by the action of $W$ is the base chamber $K$, and the $W$-stabiliser of $\s_T$ is precisely the spherical special subgroup $W_T$. 

For $s \in S$, the \emph{mirror} $K_s$ is the union of the set of edges in the base chamber which contain $\s_{\{s\}}$ but not $\s_\emptyset$.  The mirror $K_s$ is thus the star graph of valence $n$, where $n$ is the cardinality of the set  $\{ t \in S \mid st = ts, t \neq s \} $.  Note that $n \geq 2$, since $\G$ has no isolated vertices or vertices of valence one.  The \emph{centre} of the mirror $K_s$ is the vertex $\s_{\{s\}}$.  Any translate of $K_s$ by an element of $W$ is called a \emph{panel (of type $s$)}.  

Let $\Sigma$ be the Davis complex cellulated by either big or small squares.  We now metrise $\Sigma$ so that each big square is a unit Euclidean square, hence each small square is a Euclidean square of side length $\frac{1}{2}$.  By~\cite[Theorem 12.2.1]{davis-book}, this piecewise Euclidean cubical structure on $\Sigma$ is $\CAT(0)$.  Since the group $W$ acts on $\Sigma$ with compact quotient $K$ and finite stabilisers, $W$ is a $\CAT(0)$ group.    

Let $W_T$ be a special subgroup of $W$.   Then the Cayley graph of $W_T$ (with respect to the generating set $T$) embeds isometrically in $\Cayley{\G} \subset \Sigma$.  Hence for each $g \in W$ and each special subgroup $W_T$ of $W$, left-multiplication of the Cayley graph of $W_T$ by $g$ results in an isometrically embedded copy of the Cayley graph of $W_T$ in $\Cayley{\G} \subset \Sigma$, which contains the vertex $g$.  We will refer to this copy as the \emph{Cayley graph of $W_T$ based at $g$}.  For each special subgroup $W_T$ of $W$, and each coset $gW_T$, there is also an isometrically embedded copy of $\Sigma_T$ in $\Sigma$.  
If $\Theta$ is an induced subgraph of $\G$, 
we may denote by $\Sigma_\Theta$ the Davis complex for the special subgroup $W_\Theta$, and by
$\Cayley\Theta$ the Cayley graph for $W_\Theta$ with generating set the vertices of $\Theta$.  

\begin{remark}\label{rmk:davis}
Suppose that $T$ is the set of vertices of an embedded four-cycle in $\G$, so that $W_T \cong D_\infty \times D_\infty$.  Then each copy of $\Sigma_T$ in $\Sigma$ is an isometrically embedded copy of the Euclidean plane (tessellated by either big or small squares).  Consider $\Sigma$ with the cellulation by big squares and let $T_1$ and $T_2$ be the sets of vertices of embedded four-cycles in $\G$ such that $W_{T_1 \cup T_2}$ splits over $W_{T_1 \cap T_2} \cong D_\infty \times C_2$.  Then each intersection of a copy of the flat $\Sigma_{T_1}$ with a copy of the flat  $\Sigma_{T_2}$ in $\Sigma$ is an infinite band of big squares corresponding to a copy of 
$\Sigma_{T_1 \cap T_2}$.  To be precise, this infinite band of big squares is the direct product $\R \times [0,1]$ tessellated by squares of side length $1$.
\end{remark}

\subsection{Walls}\label{sec:walls}
Consider the Davis complex $\Sigma = \Sigma_\G$ with the cellulation by small squares.  Recall that an element $r \in W = W_\G$ is a \emph{reflection} if $r = gsg^{-1}$ for some $g \in W$ and $s \in S$.  A \emph{wall} in $\Sigma$ is defined to be the fixed set of a reflection $r \in W$.  For each reflection $r$, the wall associated to $r$ separates $\Sigma$, and $r$ interchanges the two components of the complement.  Each wall is a totally geodesic subcomplex of the $\CAT(0)$ space $\Sigma$, hence each wall is contractible.  By the construction of $\Sigma$, each wall in $\Sigma$ is a union of panels, and so is contained in the $1$-skeleton of $\Sigma$.  Hence each wall of $\Sigma$ is a tree.

We now assign \emph{types} $s \in S$ to the walls.  To show that this can be done in a well-defined fashion, suppose first that $gsg^{-1} = s'$, where $g \in W$ and $s,s' \in S$.  Fix a reduced word $(s_1,\ldots,s_k)$ for $g$, and consider the trivial word $\mathbf{s}= (s_1,\ldots,s_k,s, s_k, \ldots, s_1, s')$, which corresponds to the equation $gsg^{-1}s' = 1$.  Since $\mathbf{s}$ is non-reduced, by Tits' solution to the word problem for $W$ (see the final paragraph of Section~\ref{sec:coxeter} above), we must be able to reduce $\mathbf{s}$ to the empty word by a sequence of operations of deleting repeated letters, and swapping $ut$ for $tu$, where $u,t \in S$ are adjacent vertices.  It follows that the number of instances of each letter in $\mathbf{s}$ must be even.   Thus~$s = s'$, in other words, no two distinct elements of $S$ are conjugate in $W$.  Hence for any reflection $r \in W$, there is a unique $s \in S$ so that $r = gsg^{-1}$ for some $g \in W$.  It is thus well-defined to declare the type of the wall which is the fixed set of the reflection $r = gsg^{-1}$ to be $s$.  A wall of type $s$ is a union of panels of type $s$, and in fact is a maximal connected union of panels of type $s$.  So if each panel of type $s$ is a star-graph of valence $n \geq 2$, each wall of type $s$ will be a $(2,n)$-biregular tree.  

For each generator $s \in S$, we denote by $H_s$ the unique wall of type $s$ which contains a panel of the base chamber, and by $gH_s$, for $g \in W$, the unique translate of the wall $H_s$ which contains a panel of the chamber $gK$.  If $H$ is a wall of type $s$, then all walls that intersect $H$ are of types which commute with $s$ (and are not equal to $s$).  Since $\G$ is triangle-free, there are no triples of pairwise intersecting walls.  All intersections of walls consist of two walls intersecting at right angles at the centre of some big square, thus subdividing it into four small squares.  

\subsection{Paths}\label{sec:paths}
A \emph{path} in $\Cayley \G$ is a  
map from an interval (finite or infinite) to $ \Cayley\G$, such that each integer is mapped to a vertex 
of $\Cayley \G$ and consecutive integers are mapped to adjacent vertices.  
Given a path $\alpha$, we may use $\alpha(i)$ to denote either the image vertex in $\Cayley \G$ or the group element in $W_\G$ associated with that vertex.   

As noted in Section~\ref{sec:davis}, the Cayley graph $\Cayley{\G}$ is the $1$-skeleton of the cellulation of $\Sigma$ by big squares. 
In this cellulation, each edge of $\Cayley\G$ crosses a unique wall in $\Sigma$.   Thus  the length of a path in $\Cayley\G$ is equal to its number of wall-crossings (note that a path may cross a given wall more than once).  We will sometimes describe paths using the labels of the walls they cross.  For example, by the statement ``$\alpha$ is the geodesic ray emanating from (or based at) $g$ labelled $a_1 a_2 a_3 \dots$'' we will mean that $\alpha$ is a geodesic path such that $\alpha(0)=g$ and $\alpha(i)=ga_1a_2 \dots a_i$ for $i>0$.  The path will be a geodesic if each subsegment 
$a_i \dots a_j$ is reduced.  
We will often use the fact that a path is a geodesic if and only if it does not cross any wall twice (compare Lemma 3.2.14 and Theorem 3.2.16 of~\cite{davis-book}).  
If $\alpha$ is a geodesic, we will use the notation $\alpha_{[i_1,i_2]}$ to denote the part of $\alpha$ that lies between 
$\alpha(i_1)$ and $\alpha(i_2)$, including these endpoints.  The \emph{support} of a path is the set of labels of the walls that it crosses. 

Since $\G$ is  triangle-free, the set of all generators that commute with a given one, say $a$, generate a special subgroup $W_T$ of $W_{\G}$ which is a free product of finitely many copies of $C_2$.  Thus the Cayley graph of $W_T$ (with generating set $T$) is a tree.  Now consider a wall $gH_a$ of type $a$.  There is a copy of the Cayley graph of $W_T$ based at $g$ which runs parallel to the wall $gH_a$, at constant distance $\frac{1}{2}$ from this wall.  We say that a path emanating from $g$ \emph{runs along} the wall $gH_a$ if it is a path in this copy of the Cayley graph of $W_T$.  Equivalently,  the path emanates from $g$ and has support contained in the set of generators labelling the link of $a$ in $\G$.   

Another fact that will be used repeatedly is the following: Suppose $\gamma$ is a geodesic segment, and $\eta$ is any path between its endpoints.  Let $H$ be a wall that is crossed by $\gamma$.  Then $\eta$ crosses $H$ at least once.  This is because $H$ (like any wall) separates the Davis complex, and $\gamma$, being a geodesic, crosses $H$ exactly once.  Thus the endpoints of $\gamma$ are in different components of the complement of $H$.  Since $\eta$ is a (continuous) path connecting them, $\eta$ must cross $H$.

\section{Linear and quadratic divergence in right-angled Coxeter groups}\label{sec:linear and quadratic}

In this section we prove Theorem~\ref{thm:linear and quadratic} of the introduction.  We characterise the defining graphs of $2$-dimensional right-angled Coxeter groups with linear and quadratic divergence in Sections~\ref{sec:linear} and~\ref{sec:quadratic} respectively.  

All the graphs $\G$ considered in this section satisfy our standing assumptions: they are connected, simplicial, triangle-free and have no separating vertices or edges.  
Recall from Section~\ref{sec:divergence} that the divergence of $W_\G$ is by definition the divergence of one of its Cayley graphs.  
We denote by $\div{\G}$ the divergence of the Cayley graph $\Cayley\G \subset \Sigma_\G$.  All distances below will be measured in the Cayley graph $\Cayley\G$, that is, using the word metric on $W_\G$ with respect to the generating set $S$, and all paths considered will be in $\Cayley\G$.
\subsection{Linear divergence}\label{sec:linear}  

In this section we prove the following result.

\begin{thm}\label{thm:linear-div}
The divergence $\div{\G}$ is linear (i.e.~$\div{\G}(r)\simeq r $) if and only if $\G$ is a join. 
\end{thm}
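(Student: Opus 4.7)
The theorem has two directions: ``join implies linear divergence'' and ``linear divergence implies join''. The forward direction is a direct construction using the product structure; the converse is more delicate and parallels the argument of Abrams--Behrstock--Charney--Dani--Young for right-angled Artin groups.

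\emph{Forward direction.} Since $\G$ is triangle-free, being a (non-trivial) join forces $\G$ to be a complete bipartite graph $K_{m,n}$, and the absence of separating vertices or edges forces $m, n \geq 2$. Hence $W_\G = W_1 \times W_2$, where each $W_i$ is the free product of copies of $C_2$ indexed by the vertices on one side, and $\Cayley{W_i}$ is an infinite tree of valence $\geq 2$; the word metric on $W_\G$ decomposes as $|(g_1, g_2)| = |g_1|_{W_1} + |g_2|_{W_2}$. Given $x = (x_1, x_2)$ and $y = (y_1, y_2)$ in $S(e,r)$, I would use the geodesic extension property in each tree to extend $x_1, y_1$ in $\Cayley{W_1}$ and $x_2$ in $\Cayley{W_2}$ to points $\tilde x_1, \tilde y_1, \tilde x_2$ of word length exactly $r$ along rays from $e$ passing through the original point. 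The concatenation
\[
 x \to (\tilde x_1, x_2) \to (\tilde x_1, \tilde x_2) \to (\tilde y_1, \tilde x_2) \to (\tilde y_1, y_2) \to y
\]
has the property that on each segment the coordinate not being changed has norm $\geq r$ (for the middle three) or lies on a monotone segment of an outward ray (for the first and last), so every intermediate vertex has total norm $\geq r$ and the path is avoidant. Summing the five lengths gives a bound of at most $6r$, hence $\div{\G}(r) \preceq r$, and the trivial lower bound $r \preceq \div{\G}(r)$ (realised by diametric antipodes on a geodesic through $e$) completes linearity.

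\emph{Converse.} I would argue by contrapositive: if $\G$ is not a join, then $\div{\G}$ is super-linear. Non-join of $\G$ is equivalent to the complement graph $\bar{\G}$ being connected, which in particular yields a pair of non-adjacent vertices $a, b \in S$, so that $\la a, b\ra \cong D_\infty$ acts on a bi-infinite geodesic line $\gamma \subset \Cayley{\G}$ labelled $\dots abab.abab\dots$ with $\gamma(0) = e$. The plan is to show $\div{\gamma}(r) \succeq r^2$ (super-linear is enough to rule out linearity). Any $r$-avoidant path $\eta$ from $\gamma(-r)$ to $\gamma(r)$ must cross each of the $2r$ walls of types $a$ and $b$ that $\gamma$ crosses, and using the tree structure of these walls from Section~\ref{sec:walls} together with nearest-point projection onto $\gamma$ (available since $\Sigma_\G$ is $\CAT(0)$ and $\gamma$ is a geodesic line), I would estimate the length contributed by $\eta$ between consecutive wall-crossings along $\gamma$.

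The main obstacle is making the per-crossing cost rigorous. Intuitively, non-join of $\G$ produces a vertex $c$ that fails to commute with $a$ or with $b$, which prevents $\eta$ from efficiently ``parallel-transporting'' across many walls using commuting generators, and the connectedness of $\bar{\G}$ supplies such obstructions globally. Converting this intuition into a concrete length estimate---showing that between two consecutive wall-crossings the path $\eta$ must contribute length $\Omega(r)$, so that the total length is $\Omega(r^2)$---is the heart of the argument, and would rely on the interaction between paths and walls developed in Section~\ref{sec:paths}, in the spirit of the analogous RAAG result. Once super-linearity is established, linearity of $\div{\G}$ is impossible, so $\G$ must be a join, completing the proof.
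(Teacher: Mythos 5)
Your forward direction is correct and is more explicit than the paper's, which simply cites~\cite[Lemma 7.2]{abddy} for the general fact that a product of two groups with the geodesic extension property has linear divergence. Your five-segment avoidant path in $W_1 \times W_2$ and the $6r$ length bound check out, and the reduction to $K_{m,n}$ with $m,n \geq 2$ is exactly right.

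The converse, however, has a genuine gap beyond the admitted incompleteness, and it lies precisely in the choice of geodesic. You take $\gamma$ to be the line labelled $\ldots abab \ldots$ for a single non-adjacent pair $a,b$, and then hope that the wall-counting argument gives a super-linear bound per period. But the key lemma in the paper's wall-counting argument (Proposition~\ref{prop:quadratic-lower}) requires the period word $w$ to have \emph{full support}: $w$ is built from a closed walk in the complement graph $\G^c$ that visits \emph{every} vertex of $\G$, so that $\gamma$ crosses a wall of every type in each period. This is what lets one conclude that each wall crossed by the parallel-transport geodesic $\nu_i$ (running along $w^iH_{a_1}$) is genuinely separated from $\nu_{i+1}$ by some wall that $\gamma$ crosses in the intervening period. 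With the word $w = ab$, the support is only $\{a,b\}$; if a generator $d$ commutes with \emph{both} $a$ and $b$, then a wall of type $d$ crossed by $\nu_i$ may cross $a H_b$ (since $d$ and $b$ commute), reach $\nu_{i+1}$, and destroy the separation. For instance, in a pentagon $a$–$d$–$b$–$d'$–$c$–$a$, a wall of type $d$ contains the entire line $\R \times \{\tfrac{1}{2}\}$ inside the infinite band for $\langle a,b,d\rangle \cong D_\infty \times C_2$, and therefore meets every $\nu_i$; the length estimate collapses. So the geodesic $\ldots abab\ldots$ is not merely hard to analyse — it is the wrong geodesic. The full-support condition on $w$ (and the property that no two consecutive letters of $w$ commute, including the last and first, which guarantees $w^n$ is reduced \emph{and} that $w^iH_{a_1}$ does not cross any subsequent wall) is the structural input that makes the argument go through; it is exactly here that the connectedness of $\G^c$ is used in an essential way, rather than just to produce one non-adjacent pair. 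Finally, you explicitly punt on the ``heart of the argument''; that heart is Definition~\ref{def:gamma} together with the wall-separation claim in Proposition~\ref{prop:quadratic-lower}, and without the full-support geodesic the separation claim is simply false.
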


As noted in Section~\ref{sec:coxeter}, the graph $\G$ is a join if and only if $W_\G$ is reducible (that is, $W$ splits as a direct product of special subgroups).  
It is proved in~\cite[Lemma 7.2]{abddy} that a direct product $H \times K$ has linear divergence if both $H$ and $K
$ have the geodesic extension property.
This property certainly holds for right-angled Coxeter groups.  
Thus if $\G$ is a join, $W_\G$ has linear divergence.

In Proposition~\ref{prop:quadratic-lower} below, we prove that when $\G$ is not a join, the Cayley graph of 
$W_{\G}$ contains a bi-infinite geodesic $\gamma$ such that $\div{\gamma}(r) \succeq r^2$. 
This completes the proof of Theorem~\ref{thm:linear-div}, as $\div{\G}(r) \succeq r^2$ in this case.  

\begin{definition}[The word $w$ and bi-infinite geodesic $\gamma$]\label{def:gamma}
Recall that the \emph{complementary graph} of~$\G$, 
denoted by $\G^c$, is the graph with the same vertex set as $\G$, in which two vertices are 
connected by an edge if and only if they are not connected by an edge in $\G$.  Since $\G$ is not a 
join, $\G^c$ is connected.  Choose a loop in $\G^c$ which visits each vertex (possibly with 
repetitions). Choose a vertex $a_1$ on this loop, and let $w=a_1 \dots a_k$ be 
the word formed by the vertices of this loop in the order encountered along the loop, where $a_k$ is 
the last vertex encountered before the loop closes up at its starting point $a_1$.  
We assume that the loop is never stationary at a vertex, so that $a_i \ne a_{i+1}$ for any $i$. 
Then $w$ is a 
word in the generators of $\G$ such that no two consecutive generators commute, and $a_k$ does 
not commute with $a_1$.  It follows that $w^n$ is reduced for all $n \in \Z$.  Let $\gamma$ be the 
bi-infinite geodesic in $\Cayley{\G}$ which passes through $e$ and is labelled by 
$\dots wwww\dots$, so that $\gamma(0)=e$, $\gamma(i)=a_1\cdots a_i$ for $1 \leq i \leq k$, $\gamma(-1)=a_k$, and so on.  
\end{definition}

\begin{prop}\label{prop:quadratic-lower}
If $\G$ is not a join, and 
$\gamma$ is the bi-infinite geodesic in $\Cayley{\G}$ from Definition~\ref{def:gamma}, then $\div{\gamma} (r)
\succeq r^2$.  
\end{prop}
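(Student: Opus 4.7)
My plan is a van Kampen diagram argument, supplemented by a rank-one axis condition on $\gamma$. Given any $r$-avoidant path $\eta$ from $\gamma(-r)$ to $\gamma(r)$, form a van Kampen diagram $D$ over the standard presentation of $W_\G$ for the null-homotopic loop $\gamma_{[-r,0]}^{-1}\cdot\gamma_{[0,r]}\cdot\eta^{-1}$. Each $2$-cell of $D$ is a square labelled $stst$ for some commuting pair $s,t\in S$, and each boundary edge carries a unique dual curve of $D$: an embedded arc joining two same-type midpoints of~$\partial D$ through opposite edges of its intervening $2$-cells. For each $i \in \{-r+1,\dots,r\}$, the wall $H_i$ of $\Sigma$ that $\gamma$ crosses between $\gamma(i-1)$ and $\gamma(i)$ is crossed exactly once by $\gamma$, so there is a unique dual curve $\delta_i$ of $D$ with $\gamma$-endpoint at the midpoint of the $i$-th $\gamma$-edge and $\eta$-endpoint at the midpoint of the $\eta$-edge at some position $\pi(i)$.

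Two straightforward ingredients follow. First, a lower bound $L_{\delta_i} \geq r - |i|$ on the length (number of $2$-cells traversed) of each $\delta_i$ in~$D$: because $\gamma$ is a $\CAT(0)$ geodesic through $e$ meeting the convex wall $H_i$ orthogonally at $q_i = \gamma(i-1/2)$, the point $q_i$ is the closest point of $H_i$ to $e$, at distance $|i| - 1/2$; the $r$-avoidance of $\eta$ gives $d(e,p_i) \geq r - 1/2$ at the crossing $p_i \in \eta \cap H_i$, and convexity of walls in~$\Sigma$ promotes this to $L_{\delta_i} \geq d_{H_i}(q_i,p_i) \geq r - |i|$. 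Second, consecutive letters $a_{[i]}, a_{[i+1]}$ of the cyclic word $w$ never commute --- by construction of $w$ as a loop in $\G^c$, including the wrap-around pair $a_k, a_1$. Hence there is no $2$-cell of $D$ labelled $a_{[i]} a_{[i+1]} a_{[i]} a_{[i+1]}$, so the dual curves $\delta_i$ and $\delta_{i+1}$ are disjoint in $D$; reading the cyclic order of endpoints on $\partial D$ gives $\pi(i) < \pi(i+1)$, so $\pi$ is strictly monotonic, no two of the $\delta_i$ cross, and they partition $D$ into $2r+1$ sub-disks $R_j$ between consecutive $\delta_j, \delta_{j+1}$.

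The final and hardest step is to extract a quadratic lower bound on $|\eta|$ from these inputs. A direct sub-disk counting --- bounding $L_{\delta_j}$ above by the number of $\eta$-$\eta$ dual curves spanning $\delta_j$, and relating this to the lengths of adjacent $\eta$-segments of $R_{j-1}, R_j$ --- tends to give only a linear bound on $|\eta|$. The crucial additional input is that $\gamma$ is a rank-one (equivalently, Morse) axis of $\Sigma$, which is exactly where the non-join hypothesis enters: since $w$ involves every vertex of $\G$ and $\G$ has no separating vertex, no generator $s \in S$ is adjacent in $\G$ to every other vertex of $\G$, so no generator of $W_\G$ commutes with $w$, and hence $\gamma$ bounds no flat half-plane in~$\Sigma$. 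Standard rank-one results in $\CAT(0)$ spaces then force the walls $H_j, H_{j+1}$ to diverge linearly away from $\gamma$, so the $\eta$-segment between the crossings $p_j, p_{j+1}$ has length $\succeq r - |j|$; summing over $j$ yields $|\eta| \succeq \sum_{|j|<r}(r-|j|) \asymp r^2$, whence $\div{\gamma}(r)\succeq r^2$. The main obstacle is the quantitative divergence-of-walls step: verifying this from the rank-one condition directly via the van Kampen diagram, or alternatively invoking the general theorem (in the style of Charney--Sultan and Gersten) that a Morse axis in a $\CAT(0)$ space has divergence at least quadratic.
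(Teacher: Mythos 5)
Your approach is genuinely different from the paper's, and it has a real gap at the crucial step. The paper's proof obtains the key estimate on the length of each $\eta$-segment by a purely combinatorial wall-separation argument: letting $\nu_i$ be the geodesic along the wall $w^iH_{a_1}$ from $w^i$ to the first $\eta$-crossing $g_i$ of that wall, it shows that every wall crossed by $\nu_i$ separates $g_i$ from $g_{i+1}$. The separating wall is a wall crossed by $\gamma$ between $w^i$ and $w^{i+1}$ having the same type as the first wall of $\nu_i$; by the non-commutativity of consecutive letters in $w$ this wall cannot meet $w^iH_{a_1}$ and so lies strictly between. Hence the sub-path $\eta_i$ of $\eta$ from $g_i$ to $g_{i+1}$ must cross every wall crossed by $\nu_i$, and $\ell(\nu_i) \geq k(n-i)$ by $r$-avoidance. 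No Morse or rank-one considerations are used.

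Your van Kampen setup and the monotonicity of $\pi$ are fine --- this is essentially the dual picture of the paper's separation argument --- but you stop short at the key estimate, correctly noting that direct sub-disk counting gives only a linear bound, and then defer to ``standard rank-one results.'' That is precisely where the content of the proof lies, and it has not been supplied. Two specific problems. First, the deduction that $\gamma$ bounds no flat half-plane is not immediate from ``no generator commutes with $w$'': a flat half-plane along a periodic axis in a $\CAT(0)$ cube complex need not be spanned by a single commuting generator, so ruling one out requires a genuine hyperplane argument (or an appeal to rank rigidity), not just the observation about $\G$ not being a cone. Second, the implication ``Morse axis $\Rightarrow$ divergence $\succeq r^2$'' is not an off-the-shelf theorem: Charney--Sultan characterise Morse geodesics in $\CAT(0)$ spaces by \emph{super-linear} (not quadratic) lower divergence, and a quadratic lower bound is exactly what the proposition is asserting, so there is a real risk of circularity if the Morse property is itself established via a divergence estimate. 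The paper's direct wall-crossing argument sidesteps both difficulties and is considerably more elementary; to complete your route you would need to prove both sub-claims from scratch, at which point the van Kampen machinery you set up would likely reduce to the paper's separation argument anyway.
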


The idea of the proof is similar to that of the corresponding result for right-angled Artin groups  in 
Lemma~7.3 in~\cite{abddy}, although we write it in terms of crossings of walls rather than 
van Kampen diagrams.  We include the proof here because it sets the stage for the proof of 
Proposition~\ref{prop:non-cfs-cubic-lower}. 

\begin{proof}
It is enough to obtain a lower bound on $\avoid{\gamma(-nk)} {\gamma(nk)}$ as a quadratic  function of $n$ (where $k$ is the length of the word $w$ from Definition~\ref{def:gamma}).  
  Let $\eta$ be an arbitrary avoidant path from $\gamma(-nk)$ to $\gamma(nk)$.  
Since $\gamma_{[-nk,nk]}$ is a geodesic and $\eta$ is a path with the same endpoints, $\eta$
must cross each wall crossed by $\gamma$ at least once.  
For notational convenience, we will focus on the walls $w^iH_{a_1}$ for $0 \le i \le n-1$ which are crossed by 
$\gamma_{[0,nk]}$.
Now let $(g_i, g_ia_1)$ be the edge of $\Cayley{\G}$ at which $\eta$ first crosses $w^i H_{a_1}$, where $g_i$ is the vertex in the component of the complement of $w^iH_{a_1}$ containing $e$.
Let $\eta_i$ be the part of $\eta$ between $g_{i}$ and $g_{i+1}$ (so that the first edge of $\eta_i$ is 
$(g_i, g_ia_1)$).  

For $0 \le i \le n-1$, 
let $\nu_i$ denote the geodesic connecting 
$w^i$ and $g_i$ which runs along $w^iH_{a_i}$, and let $H_i$ be the first wall crossed by $\nu_i$, with type $a_j$ for 
some $j$.  We claim that $H_i$ does not intersect $\nu_{i+1}$.  Since $a_j$ belongs to the support of $w$, 
the segment of $\gamma$ between $w^i$ and $w^{i+1}$ crosses a wall of type $a_j$.  By the construction of $w$, 
this wall cannot intersect $w^iH_{a_1}$.  It is therefore distinct from $H_i$ and consequently separates $H_i$ from 
$\nu_{i+1}$.  It follows that no subsequent wall crossed by $\nu_i$ intersects $\nu_{i+1}$ either.  Thus each wall crossed by $\nu_i$ separates $g_i$ and $g_{i+1}$ into distinct components.  Since $\eta_i$ is a path from 
$g_i$ to $g_{i+1}$, it must cross all of these walls.  Thus $\ell(\eta_i) \ge \ell(\nu_i) \ge k(n-i)$, and 
$$\ell(\eta) \ge \sum_{i=0}^{n-1} \ell(\eta_i) \ge \sum_{i=0}^{n-1} k(n-i) \ge \frac k2 n^2$$
which completes the proof.
\end{proof}

\subsection{Quadratic divergence}\label{sec:quadratic}

We first introduce the \cfs\ terminology for the graphs which give rise to right-angled 
Coxeter groups with quadratic divergence.  
The main result of this section is Theorem~\ref{thm:cfs-div}. below.

Given a graph $\G$, define the associated \emph{four-cycle graph} $\G^4$ as follows.  The vertices of $\G^4$ are 
the embedded loops of length four (i.e.~\emph{four-cycles}) in $\G$.  Two vertices of $\G^4$ are connected by an 
edge if the corresponding four-cycles in $\G$ share a pair of adjacent edges.   For example, if $\G$ is the 
join $K_{2,3}$,
then $\G^4$ is a triangle.
Given a subgraph $\Theta$ of $\G^4$, we define the \emph{support} of $\Theta$ 
to be the collection of vertices of $\G$ (i.e. generators of $W_{\G}$) that appear in the four-cycles in $\G$ corresponding to the vertices of $\Theta$. 

\begin{definition}[\cfs]
A graph $\G$ is said to be \cfs\   if there exists a component of $\G^4$ whose support is the entire vertex set of $\G$, i.e., there is a ``$\mathcal C$omponent with $\mathcal F$ull $\mathcal S$upport''.
\end{definition}

\begin{figure}
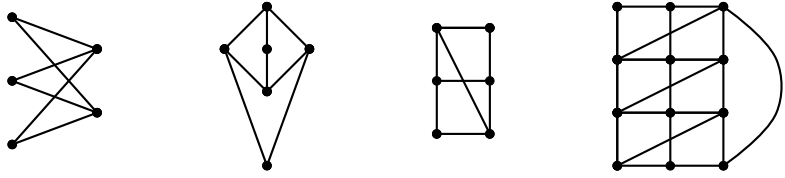

\begin{overpic}[scale=1]%
{cfs-graphs-intersect}
\end{overpic}
\caption{Some \cfs\  graphs. (The middle two are actually the same graph.)}
\label{fig:cfs}
\end{figure}

\begin{figure}
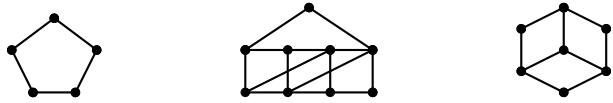

\begin{overpic}[scale=1]%
{non-cfs-graphs-intersect}
\end{overpic}
\caption{Some non-\cfs\ graphs. (The four-cycle graph of the first one is empty.  In the second one the four-cycle graph is connected but does not have full support, 
while in the third, the four-cycle graph has full support, but is not connected and does not have a component with full support.)}
\label{fig:noncfs}
\end{figure}

Figures~\ref{fig:cfs} and~\ref{fig:noncfs} show some examples of \cfs\ graphs and non-\cfs\ graphs 
respectively.  
Note that any join is \cfs.  
The last example in Figure~\ref{fig:cfs} shows that the four-cycle graph of a \cfs\ 
graph need not be connected. However, the following observation will be useful in what follows:

\begin{obs} \label{obs:cfs}
The graph $\G$ is \cfs\ if and only it if has a subgraph $\Lambda$ such that $\Lambda^4$ is connected, and the support of 
$\Lambda^4$ is the vertex set of $\G$.  The graph $\Lambda$ is obtained from $\G$ by (possibly) deleting some edges, while keeping all the vertices.  
\end{obs}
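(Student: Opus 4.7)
\smallskip

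\noindent\textbf{Proof plan.} The equivalence breaks into two directions. I would prove the easier backward implication first. Suppose $\Lambda$ is a spanning subgraph of $\G$ for which $\Lambda^4$ is connected with support $V(\G)$. Every four-cycle of $\Lambda$ is a fortiori a four-cycle of $\G$, and whenever two four-cycles of $\Lambda$ share a pair of adjacent edges, they share that same pair in $\G$. Hence $\Lambda^4$ sits naturally as a subgraph of $\G^4$. A connected subgraph of $\G^4$ must lie inside a single connected component, and that component inherits the full support from $\Lambda^4$. So $\G$ is \cfs.

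For the forward implication, I would start from a component $C$ of $\G^4$ with support equal to $V(\G)$ and propose the natural candidate: let $\Lambda$ be the spanning subgraph of $\G$ whose edge set is $\bigcup_{\sigma \in C} E(\sigma)$, the union of the edges of the four-cycles of $\G$ represented by the vertices of $C$. By construction each such four-cycle uses only edges of $\Lambda$, so it is still a four-cycle of $\Lambda$; likewise, every adjacency between vertices of $C$ persists in $\Lambda^4$. Thus $C$ embeds as a subgraph of $\Lambda^4$. Since every vertex of $\G$ lies in some four-cycle of $C$ by hypothesis, the support of $\Lambda^4$ contains (hence equals) $V(\G)$, taking care of the support condition.

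The main work, and what I expect to be the main obstacle, is verifying that $\Lambda^4$ is itself connected. The issue is that $\Lambda$ may well contain ``extra'' four-cycles $\sigma$ not originally in $C$; by construction each edge of such a $\sigma$ lies in some $C$-four-cycle, and the task is to realise $\sigma$ as lying in the same component of $\Lambda^4$ as $C$. I would attack this by taking a pair of consecutive edges of $\sigma$, invoking the two $C$-four-cycles containing them, and using the triangle-free hypothesis on $\G$ (which forbids common neighbours of adjacent vertices and thereby constrains the configuration) to force the required chain of adjacencies in $\Lambda^4$.  An alternative — possibly cleaner — route would be to enumerate the vertices of $C$ in a breadth-first order $\sigma_1,\sigma_2,\ldots$ and build $\Lambda$ iteratively, adding the edges of $\sigma_{i+1}$ at stage $i+1$ and arguing by induction that the intermediate $\Lambda_i^4$ remains connected, using the fact that $\sigma_{i+1}$ shares a pair of adjacent edges with some previously absorbed $\sigma_j$ to connect any newly created four-cycles back into the existing component.
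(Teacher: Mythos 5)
The paper states Observation~\ref{obs:cfs} without proof, so there is no official argument to compare against. Your backward implication is correct: $\Lambda^4$ embeds as a subgraph of $\G^4$, so a connected $\Lambda^4$ with full support lands inside a single component of $\G^4$, and that component has full support.

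For the forward implication you have correctly isolated the one genuine subtlety, which the paper passes over in silence: the spanning subgraph $\Lambda$ built from the edges of the $C$-cycles may have four-cycles not in $C$, and these must be shown to lie in the same component of $\Lambda^4$. Of your two sketched routes, the first (pairing consecutive edges of a stray four-cycle with the $C$-cycles carrying them) does not obviously close: the two $C$-cycles through consecutive edges of $\sigma$ need not themselves be $\G^4$-adjacent, and triangle-freeness is not the decisive tool here (it is already baked into the fact that four-cycles of $\G$ are chordless). Your second, iterative route is the one that works, but you have not supplied the observation that makes it go. Namely, if $\sigma_{i+1}$ is chosen so that it is $\G^4$-adjacent to some already-added $\sigma_j$ (so they share three consecutive vertices) and so that its remaining vertex $d$ has not yet appeared, then $d$ has degree exactly $2$ in $\Lambda_{i+1}$: its only incident edges are the two edges of $\sigma_{i+1}$ at $d$, and these are exactly the new edges of $\Lambda_{i+1}$. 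Hence every four-cycle of $\Lambda_{i+1}$ not already in $\Lambda_i$ passes through $d$, must use both of its edges, and so shares an adjacent pair of edges with $\sigma_{i+1}$. This keeps $\Lambda_{i+1}^4$ connected by induction. To finish one must also verify that such a $\sigma_{i+1}$ (adjacent to an added cycle and carrying a fresh vertex) is always available until $V(\G)$ is exhausted, which is where connectedness and full support of $C$ enter; your proposal does not address this last point, so the argument is still open as written.
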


We now characterise the graphs which give rise to right-angled Coxeter groups with quadratic divergence.

\begin{thm}\label{thm:cfs-div}
The divergence $\div \G$ is quadratic (i.e.~$\div{\G}(r)\simeq r^2 $) if and only if $\G$ is \cfs\ and not a join. 
\end{thm}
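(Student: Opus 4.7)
The plan is to prove Theorem~\ref{thm:cfs-div} in two halves that together handle the ``iff''. Since every join is \cfs, the relevant trichotomy is: $\G$ is a join; $\G$ is \cfs\ but not a join; or $\G$ is not \cfs. Combined with Theorem~\ref{thm:linear-div}, it suffices to establish (a) if $\G$ is \cfs\ and not a join, then $\div{\G}(r)\simeq r^2$, and (b) if $\G$ is not \cfs, then $\div{\G}(r)\succeq r^3$. Part (b) is stronger than needed but rules out quadratic divergence in the non-\cfs\ case. The quadratic lower bound in (a) is immediate from Proposition~\ref{prop:quadratic-lower}, so the real work is a quadratic upper bound in (a) and a cubic lower bound in (b).

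For the quadratic upper bound, I would invoke Observation~\ref{obs:cfs} to replace $\G$ by a subgraph $\Lambda$ on the same vertex set with $\Lambda^4$ connected and full support, and work with the flats in $\Sigma$ coming from four-cycles in $\Lambda$. By Remark~\ref{rmk:davis}, each four-cycle yields a family of isometrically embedded Euclidean flats, and adjacency in $\Lambda^4$ corresponds to flats meeting along infinite bands. Given geodesic rays $\alpha, \beta$ based at $e$ and radius $r$, the plan is to parse $\alpha$ into maximal subwords whose supports each lie in a common four-cycle of $\Lambda$; each such subword traces a segment inside a single flat, and between two consecutive subwords one hops to an adjacent flat through their shared band (using connectedness of the \cfs\ component of $\Lambda^4$ to bridge, if necessary, via a bounded chain of flats). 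The avoidant path from $\alpha(r)$ to $\beta(r)$ is then a concatenation of ``arcs along $r$-spheres'' within successive flats, each of length $O(r)$ because a single flat has linear divergence. Since the number of parse-blocks along $\alpha \cup \beta$ is at most $O(r)$ and each block-to-block jump introduces only a bounded number of flat transitions, the total length is $O(r^2)$.

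For the cubic lower bound in (b), I would mimic the strategy of Proposition~\ref{prop:quadratic-lower} but choose the bi-infinite geodesic $\gamma$ so as to exploit the failure of \cfs. The failure means no subgraph of $\G^4$ is simultaneously connected and of full support, so there is some generator (or family of generators) that cannot be ``bridged'' by flats into the rest of the diagram. Given an avoidant path $\eta$ from $\gamma(-nk)$ to $\gamma(nk)$, let $g_i$ be where $\eta$ first crosses $w^i H_{a_1}$ and let $\eta_i$ be the segment of $\eta$ between $g_i$ and $g_{i+1}$, exactly as in the quadratic proof. In the \cfs\ case each $\eta_i$ could potentially shortcut by sliding along a coherent sequence of flats; the non-\cfs\ hypothesis rules this out, and I would organise a van Kampen diagram argument to show that each $\eta_i$ must itself cross a nested family of walls, giving $\ell(\eta_i)\succeq (n-i)^2$ rather than $(n-i)$. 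Summing yields $\ell(\eta)\succeq n^3$, and hence $\div{\G}(r)\succeq r^3$.

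The main obstacle is part (b). Translating the purely combinatorial obstruction ``no component of $\G^4$ has full support'' into a quantitative geometric statement forcing a second-order nesting of wall crossings is the delicate core of the argument: one must pick the loop defining $w$ carefully (avoiding or using the ``bad'' generators appropriately), identify the walls that each $\eta_i$ is forced to cross, and verify that these walls themselves carry a nested family of avoidance-induced walls. The upper bound, by contrast, is a bookkeeping argument once the parse into flats is set up.
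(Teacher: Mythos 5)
Your overall architecture matches the paper's exactly: both split off the cases via the trichotomy (join; \cfs\ but not join; not \cfs), both get the lower bound in the \cfs\ case from Proposition~\ref{prop:quadratic-lower}, both prove a quadratic upper bound by parsing one geodesic into pieces supported on single four-cycles and hopping between flats using the connectedness of $\Lambda^4$ (Observation~\ref{obs:cfs} and the band structure of Remark~\ref{rmk:davis}), and both aim for a cubic lower bound in the non-\cfs\ case via the same subdivision of $\eta$ into segments $\eta_i$ between first crossings of the walls $w^iH_{a_1}$.

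Where your sketch falls short is in part~(b), and your framing of the obstacle is slightly off. First, the word $w$ does \emph{not} need to be chosen ``carefully avoiding or using bad generators'' --- the standard $w$ of Definition~\ref{def:gamma}, a loop through $\G^c$ with full support, is exactly what is used; it is precisely the \emph{full support} of $\beta_{[0,k]}$ that drives the argument, not any cleverness in the choice. Second, the phrase ``a nested family of walls'' does not accurately describe the mechanism. The paper's Lemma~\ref{lem:vanKampen} proceeds by building a dual cell structure on the van~Kampen diagram from its internal walls, organising the dual $2$-cells into \emph{strips} $S_0, S_1, \dots$ emanating outward from $\beta_{[0,k]}$. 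The pivotal use of the non-\cfs\ hypothesis is Claim~\ref{clm:large}: if some strip contained only quadrilateral cells, the wall-intersection pattern inside it would reconstruct a \cfs\ subgraph of $\G$ using all of $\G$'s vertices (because every wall through $\beta_{[0,k]}$ crosses the strip), contradicting non-\cfs. Hence every strip has a \emph{large} ($\ge 5$-sided) cell, which forces at least one new ``skew'' cell in the next strip; the number of skew cells therefore grows linearly with the strip index, making strip lengths grow quadratically, and then $\ell(\eta) \ge \ell(S_{r/4}) \succeq r^2$. Your sketch recognises that non-\cfs\ must be converted into a quantitative statement forcing quadratic growth, but it does not identify this large-cell/strip accumulation mechanism, which is the actual content of the proof and is genuinely nontrivial to find. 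Without it, the sketch does not yet close the gap for part~(b).
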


In Proposition~\ref{prop:cfs-quadratic-upper} below 
we obtain a quadratic upper bound on $\div \G$ when $\G$ is a \cfs\ graph.
On the other hand, Proposition~\ref{prop:quadratic-lower} above shows that if $\G$ is not a join, then there is a quadratic lower bound on $\div \G$.  This proves one direction of Theorem~\ref{thm:cfs-div}.
The other direction 
follows from 
Proposition~\ref{prop:non-cfs-cubic-lower} below, in which we show that if $\G$ is not \cfs\ then $\Cayley \G$ contains a bi-infinite 
geodesic whose divergence is at least cubic.

\begin{prop}\label{prop:cfs-quadratic-upper}
If $\Gamma$ is \cfs\  then $\div \G (r) \preceq r^2$.
\end{prop}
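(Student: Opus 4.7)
Let $\Theta$ be a connected component of $\G^4$ whose support is all of $V(\G)$, and let $\mathcal{F}$ denote the collection of flats in $\Sigma_\G$ of the form $g\Sigma_C$ with $g \in W_\G$ and $C$ a vertex of $\Theta$. Two features of this setup drive the proof. First, since $\Theta$ has full support, every edge of $\Cayley{\G}$ lies in at least one flat of $\mathcal{F}$: for each generator $s \in S$ fix a four-cycle $C(s) \in V(\Theta)$ containing $s$; then for any edge $(g, gs)$ the flat $g\Sigma_{C(s)}$ contains this edge. Second, by Remark~\ref{rmk:davis}, whenever $C, C' \in V(\Theta)$ are adjacent, appropriate translates of $\Sigma_C$ and $\Sigma_{C'}$ intersect in an infinite band of big squares. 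Moreover each member of $\mathcal{F}$ is an isometrically embedded Euclidean plane and hence has linear divergence.

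Given geodesic rays $\alpha, \beta$ based at $e$, the goal is to build an $r$-avoidant path from $\alpha(r)$ to $\beta(r)$ of length $O(r^2)$, obtained by concatenating pieces each lying in a single flat of $\mathcal{F}$.

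\emph{Step 1 (chain of band-adjacent flats).} For each edge of $\alpha_{[0,r]}$ and $\beta_{[0,r]}$ I choose a flat of $\mathcal{F}$ containing it. Between the flats associated to consecutive edges of $\alpha$ at a vertex $\alpha(i)$, I interpolate at most $\diam(\Theta)$ additional flats, all based at $\alpha(i)$, corresponding to a path in $\Theta$ joining the two four-cycles in question; by Remark~\ref{rmk:davis}, consecutive flats in this bridge share an infinite band. I repeat along $\beta$, and connect the $\alpha$-chain to the $\beta$-chain by an analogous bridge of flats all based at $e$. The result is a sequence $F_1, \ldots, F_N$ in $\mathcal{F}$ with $F_1 \ni \alpha(r)$, $F_N \ni \beta(r)$, each consecutive pair $F_j, F_{j+1}$ meeting in an infinite band, and $N = O(r)$, where the implicit constant depends only on $\G$.

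\emph{Step 2 (detour per flat).} In $F_1$ I walk from $\alpha(r)$ to a point of $F_1 \cap F_2$ via a path in $F_1 \setminus B(e, r)$ of length $O(r)$; in each $F_j$ with $1 < j < N$ I walk from a point of $F_{j-1} \cap F_j$ to a point of $F_j \cap F_{j+1}$ similarly; and finally in $F_N$ I walk to $\beta(r)$. Each leg is an arc in a Euclidean plane avoiding a disk and hence has length $O(r)$ by the linear divergence of the flat, with a uniform constant (the flats of $\mathcal{F}$ are $W_\G$-translates of finitely many isometry classes of Euclidean planes). Concatenating these legs yields a total path length of $N \cdot O(r) = O(r^2)$, which gives the desired quadratic upper bound on $\div{\G}$.

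\emph{Main obstacle.} The delicate point is Step 2: for each $F_j$ one must control the avoidance condition ``outside $B(e, r)$'', measured in the ambient metric on $\Sigma_\G$, in terms of the intrinsic geometry of the Euclidean plane $F_j$. This is handled using $\CAT(0)$-projection. Since $F_j$ is convex and closed in the $\CAT(0)$ space $\Sigma_\G$, the nearest-point projection $\pi\colon\Sigma_\G\to F_j$ is $1$-Lipschitz, so that $F_j\setminus B(e,r) \supseteq F_j\setminus B_{F_j}(\pi(e),r)$. Each band $F_{j-1}\cap F_j$ is a bi-infinite line in the plane $F_j$ and hence contains points arbitrarily far from $\pi(e)$; a standard ``around-the-disk'' arc connecting two such points in a Euclidean plane has length $O(r)$, completing the construction.
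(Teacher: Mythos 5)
Your overall strategy mirrors the paper's: build a chain of $O(r)$ flats corresponding to four-cycles in the component of $\G^4$ with full support, arranged so that consecutive flats share an infinite band, and traverse each flat by an $O(r)$-length arc avoiding the ball, for a total length of $O(r^2)$. The paper organizes the chain via a ``piece'' decomposition of $\beta$ and an induction on the number of pieces, whereas you go edge-by-edge and interpolate bridges of flats at each vertex; this is a bookkeeping difference, and your count $N = O(r)$ is sound, as is the observation that each member of your collection $\mathcal F$ is an isometrically embedded Euclidean plane.

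The gap is in Step~2, specifically in the ``Main obstacle'' paragraph. Your containment $F_j \setminus B(e,r) \supseteq F_j\setminus B_{F_j}(\pi(e),r)$ is correct, but it is strictly one-sided, and this matters at the endpoints. The vertex $\alpha(r)$ lies in $F_1$ at distance exactly $r$ from $e$, yet its distance to $\pi_1(e)$ \emph{within} $F_1$ can be as small as $r - d(e,F_1)$, which may be far less than $r$; thus $\alpha(r)$ can lie deep inside $B_{F_1}(\pi_1(e),r)$, and an arc around that disk cannot begin at $\alpha(r)$. The same issue affects $\beta(r)$ at the other end. A second, related gap is the compatibility of the connecting points: a point $p_j \in F_{j-1}\cap F_j$ chosen far from $\pi_{j-1}(e)$ in the plane $F_{j-1}$ is not automatically far from $\pi_j(e)$ in the plane $F_j$, and you do not verify that a single choice works for both adjacent legs. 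The paper sidesteps both problems by choosing the interpolating geodesics $\nu_i$ so that each one extends the geodesic $\beta_{[0,s]}$ from $e$, which pins $\nu_i(t)$ to the sphere $S(e,s+t)$ exactly and makes the avoidance of the connecting arcs in $\Z^2$ automatic. Your version can be repaired --- take $p_j$ at distance $2r$ from $e$ along a ray in the band, so that $d_{F_j}(p_j,\pi_j(e)) \geq 2r - d(e,F_j) \geq r$ for \emph{both} flats meeting that band, and for the first and last legs avoid the smaller convex region $B(e,r)\cap F_j$ (of diameter at most $2r$, with $\alpha(r)$ respectively $\beta(r)$ on its boundary) rather than the projected disk --- but these steps are not in your writeup.
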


\begin{proof}
By Example~\ref{ex:fourcycle}, 
a four-cycle in $\G$ corresponds to a subgroup $W'$
isomorphic to $D_\infty \times D_\infty$. 
Recall from Section~\ref{sec:davis} that for every $g \in W$, there is an isometrically embedded copy of the Cayley graph of ${W'}$ based at $g\in \Cayley{\G}$
By 
Theorem~\ref{thm:linear-div}, $\div{{D_\infty} \times {D_\infty}} (r)\simeq r$.  In fact it is not hard to see directly that given a pair of geodesic rays $\alpha$ and $\beta$ emanating from $e$ in $\Cayley{{D_\infty \times D_\infty}}$, there is an $r$-avoidant path connecting $\alpha(r)$ and $\beta(r)$ of length at most $2r$.  

{\it Step 1:}
We first address the case that $\G^4$ has a single component.  
Fix a $4$-cycle $\Theta$  in $\G$ and a geodesic ray $\alpha$ emanating from $e\in \Cayley{\G}$ whose support is contained in 
the set of vertex labels of $\Theta$.  Thus $\alpha$ lies   
in the copy of $\Cayley{\Theta}$ based at $e$.  
We show below that 
if $\beta$ is an arbitrary geodesic ray in $\Cayley \G$ emanating from $e$, then 
$\div{\alpha,\beta}(r)  \le Mr^2$
for every $r$, where $M = 2 \diam( \G^4)$.  
This proves the quadratic upper bound on $\div{\G}$, since it implies that if $\beta_1$ and $\beta_2$ are arbitrary 
geodesic rays based at $e$,  then 
$\div{\beta_1,\beta_2} (r) \le 2Mr^2$.

Now let $\beta$ be an arbitrary geodesic ray labelled $b_1b_2b_3\dots$ and emanating from $e$.  We first divide 
$\beta_{[0,r]}$
 into \emph{pieces} as follows, then carry out induction on the number of pieces.  Starting at $b_1$, choose the first piece to be the maximal word $b_1 \dots b_i$ 
 such that $\{b_1, b_2, \dots, b_i\}$ is contained in the set of vertex labels of a 
single 4-cycle of $\G$. Now repeat this procedure starting at $b_{i+1}$, and continue until $\beta_{[0,r]}$ is  
exhausted.

If $\beta_{[0,r]}$ consists of a single piece, then $b_1, \dots, b_r$ are among the vertices of a single 4-cycle $
\Theta'$ of $\G$.  
Since 
$\G^4$ is connected, it contains a path connecting the  fixed vertex $\Theta$ to $\Theta'$. Let 
$ \Theta =\Theta_1, \Theta_2, \dots,  \Theta_l=\Theta'$ be the vertices of $\G^4$ along this path.
For each $1\le i \le l-1$, since $\Theta_i$ and $\Theta_{i+1}$ are joined by an edge in $\G^4$, the intersection
$W_{\Theta_i} \cap W_{\Theta_{i+1}}$ is isomorphic to $W_{\Theta_i \cap \Theta_{i+1}} \cong C_2 \times D_\infty$.  

Recall from Remark~\ref{rmk:davis} that each $\Sigma_{\Theta_i}$ is an isometrically embedded Euclidean plane tesselated by big squares,  
and $\Sigma_{\Theta_i}$ and $\Sigma_{\Theta_{i+1}}$ intersect in an infinite band of big squares corresponding to a copy of 
$\Sigma_{\Theta_i \cap \Theta_{i+1}}$.  We proceed below by introducing geodesic rays $\nu_i$ based at $e$, where 
$\nu_i$ lies in $\Cayley{{\Theta_i \cap \Theta_{i+1}}} \subset \Sigma_{\Theta_i \cap \Theta_{i+1}}$ for 
$1 \le i \le l-1$.   Since successive geodesics in the sequence $\alpha=\nu_0, \nu_1, \dots \nu_{l-1}, \nu_l=\beta$
lie in a Euclidean plane, there are linear length avoidant paths between them, and concatenating these gives an avoidant path between $\alpha$ and $\beta$.

Let $\nu$ denote the geodesic in $ \Cayley{{C_2 \times{D_\infty}}}$ based at the identity and labelled 
$g_1g_2g_1g_2\dots$, where $g_1$ and $g_2$ are the generators of the $D_\infty$ factor.  
For $1 \leq i \leq l-1$, let $\nu_i$ denote the image of this geodesic in the copy of 
$\Cayley{{\Theta_i \cap \Theta_{i+1}}}$
based at $e$ in $\Cayley{\G}$ (for some identification of $g_1$ and $g_2$ with the Coxeter generators of the $D_\infty$ factor of $W_{\Theta_i} \cap W_{\Theta_{i+1}}$).   
Define $\nu_0 = \alpha$ and $\nu_{l} = \beta$, and observe that for $1\le i \le l$, the geodesics $\nu_{i-1}$ and 
$\nu_{i}$ are supported on a single $4$-cycle of $\G$, namely
$\Theta_i$. Thus $\nu_{i-1}(r)$ and $\nu_{i}(r)$ can be connected by an avoidant path of length at most $2r$ in the 
copy of $\Cayley{{\Theta_i}}$ based at $e$.
  Concatenating all of these paths, one obtains an $r$-avoidant path connecting $\alpha(r)$ and $\beta(r)$, with 
  length at most $2rl \le Mr$, since $l \le \mathrm{diam}( \G^4)$.

We now induct on the number of pieces of $\beta_{[0,r]}$ to show that 
$\avoid{\alpha(r)}{\beta(r)}$ is at most 
$Mr$ times the number of pieces.  
Suppose 
$\beta_{[0,r]}$ has $k+1$ pieces and is labelled by $w_1w_2 \dots w_kw_{k+1}$, where each $w_i$ is a piece.  Then
it is not hard to construct a word $w$ such that: 
\begin{enumerate}
\item 
the support of $w$ is contained in the support of the $4$-cycle corresponding to the piece~$w_k$;
\item  the word $w_kw$ is reduced; and 
\item $|w| = |w_{k+1}|$ (so that $|w_1w_2 \dots w_kw|=r$).
\end{enumerate}
It follows that the path $\mu$ emanating from $e$ labelled $w_1w_2\dots w_kw$ is a geodesic of length $r$ with 
$k$ pieces.  By the inductive hypothesis, there is an $r$-avoidant path connecting $\alpha(r)$ to $\mu(r)$ of length 
at most $Mkr$.  

Further, if $s=r-|w_{k+1}|$,  then $\beta_{[s,r]}$ and $\mu_{[s,r]}$  are supported on $4$-cycles 
$\Psi$ and $\Psi'$ respectively,
and $\beta(s)=\mu(s)$.   A more careful version of the construction for the base case yields an
$r$-avoidant path from $\mu(r)$ to $\beta(r)$, as follows.  As before, choose a path in $\G^4$ which visits the vertices
$\Psi = \Psi_1, \Psi_2 \dots, \Psi_m=\Psi'$, and for each $i$, choose a geodesic ray $\nu_i$ emanating from $
\beta(s)$ in the copy of $\Cayley{{W_{\Psi_i} \cap W_{\Psi_{i+1}}}}$ based at $\beta(s)$, but this time require $
\nu_i$ to have the additional property that $\beta_{[0,s]}$ concatenated with $\nu_i$ is a geodesic.  (This will be 
true for at least one of the two possibilities for $\nu_i$.) 
Now the construction from the base case (applied with basepoint $\beta(s)$ instead of $e$) yields a path that 
avoids not only the ball of radius $|w_{k+1}|$ based at $\beta(s)$, but also the ball of radius $r$ based at $e$. 
The length of this path is at most $M|w_{k+1}| \le  Mr$.
Concatenating the paths from $\alpha(r)$ to $\mu(r)$ and from $\mu(r)$ to $\beta(r)$, one has the desired $r$-avoidant path, with length clearly bounded above by 
$M(k+1)r$.

 Finally, since the total number of pieces is bounded above by $r$, the length of this avoidant path 
is bounded above by $Mr^2$.  

{\it Step 2:}
Now suppose that $\G$ is \cfs\ but $\G^4$ is not connected.  Then by Observation~\ref{obs:cfs}, there exists a subgraph $\Lambda$ of $\G$, such that $\Lambda^4$ is connected, and $\G$ is obtained from $\Lambda$ by adding edges 
(between vertices that are at least distance 3 apart in $\Lambda$).  Since the effect of adding edges is to add more commuting relations in the presentation, there is a natural quotient map $q: W_{\Lambda} \to W_{\G}$.  Hence if $\beta_1$ and $
\beta_2$ are arbitrary geodesic rays emanating from $e$ in $\Cayley{\G}$, they have pullbacks $\beta_1'$ and $
\beta_2'$ which are geodesic rays emanating from $e$ in $\Cayley{\Lambda}$.  

We claim that the pushforward of the $r$-avoidant path constructed 
in Step 1 between $\beta_1'(r)$ and $
\beta_2'(r)$ is $r$-avoidant in $\Cayley{\G}$. 
The path was constructed by concatenating several sub-paths, each of which was $r$-avoidant in a
sub-graph $\Cayley{\Psi}$, where $\Psi$ is a single four-cycle.  The 
claim follows from the observation that if $\Psi$ is an embedded four-cycle in $\Lambda$ then it is an 
embedded four-cycle in $\G$, and the composition of the induced map $q: \Cayley{\Lambda} \to \Cayley{\G}$ with the inclusion $\Cayley{\Psi} \hookrightarrow \Cayley{\Lambda}$ is actually an isometric 
embedding of $\Cayley{{\Psi}}$ into $\Cayley{\G}$.  
\end{proof}

\begin{remark}\label{rmk:thick-cfs}
Proposition~\ref{prop:cfs-quadratic-upper} is a special case of the upper bound on divergence given by Theorem~4.9 of~\cite{behrstock-drutu}.  To see this, suppose $\G$ is \cfs\ and let $\mathcal H$ be the collection of special subgroups of $W_\G$ generated by the embedded four-cycles in $\G$ which are the vertices of a component of $\G^4$ with full support.  Then it is easy to see that $W_{\G}$ is strongly algebraically thick of order at most 1 with respect to $\mathcal H$.  Hence by results in~\cite{behrstock-drutu}, the divergence of $W_\G$ is at most quadratic.   In fact, together with Proposition~\ref{prop:quadratic-lower} above, one sees that 
$W_\G$ is strongly algebraically thick of order exactly equal to 1 if and only if $\G$ is \cfs\ but not a join. 
\end{remark}

We now show that graphs which are not \cfs\ give rise to right-angled Coxeter groups with super-quadratic divergence.  
If $\G$ is not \cfs, then, in particular, it is not a join, and there is a word $w$ (of length $k$) and bi-infinite geodesic $\gamma$ in $\Cayley\G$ as 
described in Definition~\ref{def:gamma}.  We show that in this setting, the divergence of $\gamma$ is at least 
cubic.

\begin{prop}\label{prop:non-cfs-cubic-lower}
If $\G$ is not \cfs, and 
$\gamma$ is the bi-infinite geodesic in $\Cayley{\G}$ from Definition~\ref{def:gamma}, then $\div{\gamma}(r) \succeq r^3$. 
\end{prop}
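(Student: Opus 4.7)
The plan is to refine the argument of Proposition~\ref{prop:quadratic-lower} by adding a second layer of decomposition, with the goal of showing that each outer piece $\eta_i$ is itself forced to have quadratic length, so that the total is cubic. Fix $n\in\N$ and an arbitrary $nk$-avoidant path $\eta$ from $\gamma(-nk)$ to $\gamma(nk)$. Run the outer decomposition exactly as in Proposition~\ref{prop:quadratic-lower}: let $g_i$ be the first vertex at which $\eta$ crosses $w^iH_{a_1}$, let $\eta_i$ be the subpath of $\eta$ from $g_i$ to $g_{i+1}$, and let $\nu_i$ be the geodesic of length at least $k(n-i)$ running from $w^i$ to $g_i$ along $w^iH_{a_1}$. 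I aim to prove $\ell(\eta_i)\ge c(n-i)^2$ for some absolute constant $c>0$; summing then gives $\ell(\eta)\ge c'n^3$, and hence $\div{\gamma}(r)\succeq r^3$ after passing to $r=nk$.

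To establish the inner quadratic bound, I would set up a second-level decomposition of each $\eta_i$ modelled on the outer one. The geodesic $\nu_i$ lies in the Cayley graph of $W_{\operatorname{link}(a_1)}$, which is a tree since $\G$ is triangle-free (so $\operatorname{link}(a_1)$ is an independent set in $\G$). Viewing $\nu_i$ as playing the role of $\gamma$ at this inner scale, I would choose a word $w'$ in the generators from $\operatorname{link}(a_1)$ whose repetitions label a sub-geodesic of $\nu_i$, and use the walls crossed by this sub-geodesic — which, by the same mechanism as in Proposition~\ref{prop:quadratic-lower}, all separate $g_i$ from $g_{i+1}$ — to partition $\eta_i$ into inner sub-pieces $\eta_{i,j}$. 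Each inner sub-piece carries an auxiliary geodesic $\mu_{i,j}$ whose length grows linearly in $n-i$, so that rerunning the outer argument verbatim at this inner scale yields $\ell(\eta_i)\ge c(n-i)^2$, and consequently $\ell(\eta)\ge\sum_{i=0}^{n-1}c(n-i)^2\ge c'n^3$.

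The main obstacle, and the point at which the non-\cfs\ hypothesis must be used, is establishing the inner analog of the key claim ``$H_i$ does not intersect $\nu_{i+1}$'' from Proposition~\ref{prop:quadratic-lower}. If $\G$ were \cfs, one could shortcut inside each $\eta_i$ by threading through a chain of 4-cycle flats exactly as in Proposition~\ref{prop:cfs-quadratic-upper}, which would defeat any inner quadratic lower bound on $\ell(\eta_i)$. The failure of the \cfs\ condition for $\G$ — no component of $\G^4$ has full support — must therefore translate into a combinatorial obstruction preventing shortcuts between successive inner auxiliary geodesics $\mu_{i,j}$ and $\mu_{i,j+1}$: for each consecutive pair I expect to find a generator of $W_\G$ that does not appear in any 4-cycle bridging the two, providing a wall between the relevant basepoints that forces the required separation. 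Verifying this combinatorial translation of non-\cfs\ into a wall-separation statement, and choosing the inner word $w'$ carefully enough that the mechanics of Proposition~\ref{prop:quadratic-lower} go through verbatim at the inner scale, will constitute the bulk of the technical work.
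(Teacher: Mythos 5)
Your outer decomposition matches the paper exactly: you run the Proposition~\ref{prop:quadratic-lower} setup to get subpaths $\eta_i$ of $\eta$ between successive crossings of the walls $w^iH_{a_1}$, and you correctly identify that the target is $\ell(\eta_i)\gtrsim(n-i)^2$, with the non-\cfs\ hypothesis carrying that inner estimate. You also correctly flag where the \cfs\ condition would sabotage a quadratic inner bound (it would permit shortcutting through a chain of $4$-cycle flats). This is exactly the right place to look.

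However, the inner mechanism you propose does not go through, and I don't see how to repair it along the lines you sketch. First, the geodesic $\nu_i$ from $w^i$ to $g_i$ along $w^iH_{a_1}$ is \emph{determined} by the arbitrary avoidant path $\eta$ (via where $\eta$ first crosses $w^iH_{a_1}$), so you cannot ``choose a word $w'$ in the generators of $\operatorname{link}(a_1)$ whose repetitions label a sub-geodesic of $\nu_i$'': $\nu_i$ is an arbitrary geodesic in the tree $\Cayley{\operatorname{link}(a_1)}$ and need not have any periodic structure. Second, the key separation claim in Proposition~\ref{prop:quadratic-lower} (``$H_i$ does not intersect $\nu_{i+1}$'') relied essentially on the outer word $w$ having \emph{full support} in $\G$, so that between $w^i$ and $w^{i+1}$ the geodesic $\gamma$ crosses a wall of every type, forcing separation. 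Your proposed inner word $w'$ is supported only on $\operatorname{link}(a_1)$, which has no such full-support property, so the analogous claim fails outright. Third, the leap ``for each consecutive pair I expect to find a generator of $W_\G$ that does not appear in any $4$-cycle bridging the two'' is not a consequence of non-\cfs: non-\cfs\ is a global statement about the support of components of $\G^4$, and it does not yield such a local obstruction for an arbitrary pair of $4$-cycles. (Indeed, the $4$-cycle graph may be connected with almost-full support, or disconnected with each piece having full support on its part, etc.) This is the genuine gap.

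The paper bridges this gap with a qualitatively different device (Lemma~\ref{lem:vanKampen}). Instead of a recursive wall-crossing decomposition, one forms a van Kampen diagram $D$ whose boundary is $\alpha_{[0,r]}$, the avoidant path $\eta$, and $\beta_{[0,r]}$ (with $\beta$ starting with the full-support segment labelled $w$). One equips $D$ with the cell structure dual to its walls, and sweeps across $D$ in ``strips'' $S_0,S_1,\dots$ starting from the segment labelled $w$. Non-\cfs\ enters globally and at the level of $D$: if some strip consisted entirely of squares, the walls meeting that strip --- which include all walls emanating from the full-support segment $\beta_{[0,k]}$, since the strip separates $\beta_{[0,k]}$ from $\eta$ --- would let one reconstruct a \cfs\ subgraph of $\G$ with full vertex support, contradicting that $\G$ is not \cfs. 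So every strip contains a ``large'' dual cell ($\geq 5$ sides), the number of skew cells then grows at least linearly with the strip index, and strip lengths grow quadratically. Since each strip's length bounds $\ell(\eta)$ from below, this gives $\ell(\eta)\gtrsim r^2$. That is the quadratic input your plan is missing, and it is what turns your (correct) outer summation into the cubic bound.
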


\begin{proof}
Let $\eta$ be an arbitrary avoidant path from $\gamma(-nk)$ to $\gamma(nk)$.  We begin exactly as in the first paragraph of the proof of 
Proposition~\ref{prop:quadratic-lower} and define the sub-paths $\eta_i$ of $\eta$ as we did there.  
However, this time we use the fact that $\G$ is not \cfs\ to obtain a quadratic lower bound on $\ell(\eta_i)$.  
This is a consequence of the following lemma, which is proved separately below.  

\begin{lemma}\label{lem:vanKampen}
Suppose $\G$ is a graph that is not \cfs\ and 
$w$ is the word from Definition~\ref{def:gamma}.
Let $\alpha$ be an arbitrary geodesic ray emanating from $e$ that travels along $H_{a_1}$ 
and let $\beta$ be a path emanating from $e$ consisting of a geodesic segment labelled $w$ followed 
by an arbitrary geodesic ray emanating from $w$ that travels along $wH_{a_1}$. 
Then $\beta$ is a geodesic, and 
for any $r>2k$, 
$$
 \div{\alpha,\beta}(r) \ge \frac 1{16} r^2.$$
\end{lemma}

Note that $\gamma$ crosses the wall $w^iH_{{a_1}}$ at the edge $(w^i, w^i a_1)$.  Let $\nu_i$ denote the geodesic segment that connects $w^i$ to $g_i$ and runs along $w^iH_{{a_1}}$.  Let 
$\mu_i$ be the path emanating from $w^i$ consisting of the part of $\gamma$ between $w^i$
and $w^{i+1}$ concatenated with $\nu_{i+1}$.  Lemma~\ref{lem:vanKampen}, applied with basepoint $w^i$ instead of $e$, implies that $\mu_i$ is a geodesic, and that for 
$0 \le i \le n-2$, and $n>2$, 
$$\ell(\eta_i) \ge d^{\mathrm{av}}_{w^i}({g_i},{g_{i+1}}) \ge 
 d^{\mathrm{av}}_{w^i}({\nu_i(kn-ki)},{\mu_i(kn-ki)}) \ge
\frac {k^2} {16} (n-i)^2.$$
For the middle inequality above, we use the observation in the last paragraph of 
Section~\ref{sec:divergence}.
In conclusion, 
$$\ell(\eta) \ge  \sum_{i=0}^{n-2} \ell(\eta_i) \ge  \sum_{i=0}^{n-2} \frac {k^2}{16} (n-i)^2. $$
This is a cubic function of $n$.  
\end{proof}

\begin{proof}[Proof of Lemma~\ref{lem:vanKampen}]
We first show that $\beta$ is a geodesic ray.  Since $\beta_{[0,k]}$ (which is labelled by $w$) and 
$\beta_{[k,\infty]}$ are geodesics, the only way $\beta$ can fail to be a geodesic is if there is a wall
which intersects both of these.  Recall that $w=a_1 \dots a_k$, so that 
the walls crossed by $\beta_{[0,k]}$ are $\beta(i-1)H_{a_i}$ for $1 \le i \le k$,
where $\beta(0)=e$ and $\beta(i)=a_1 \dots a_i$. 
By construction, $a_i$ and $a_{i+1}$ don't commute for any $i$ (mod $k$), so it follows that 
these walls 
are pairwise disjoint, and are all disjoint from $wH_{a_1}$.  On the other hand every wall that intersects 
$\beta_{[k, \infty]}$ necessarily crosses $wH_{a_1}$, since $\beta_{[k, \infty]}$ runs along 
$wH_{a_1}$.  It follows that no wall can cross both $\beta_{[0,k]}$ and $\beta_{[k, \infty]}$.
Similarly, since $\alpha$ is a geodesic emanating from $e$ along the wall $H_{a_1}$, the same argument shows that no wall can cross both $\beta_{[0,k]}$ and $\alpha$, a fact that will be useful later in this proof.

To obtain a lower bound on $\div{\alpha,\beta}$, choose an arbitrary $r$-avoidant path $\eta$ between $\alpha(r)$ and $\beta(r)$.  Then one obtains a loop in $\Cayley{\G}$ by concatenating $\alpha_{[0,r]}$, followed by $\eta$, followed by $
\beta_{[0,r]}$ traversed in the negative direction.
There is a van Kampen diagram $D$ with boundary label equal to the word encountered 
along this loop.   Note that by construction, $\alpha_{[0,r]}$, $\beta_{[0,r]}$ and $\eta$ do not have 
 any common edges in $\Cayley{\G}$.
It follows that 
every edge of $\partial D$ is part of a $2$-cell of $D$, and that $D$ is homeomorphic to a disk.  We will abuse notation and use $\alpha$, $\beta$ and $\eta$ to denote the parts of $\partial D$ that are labelled by these paths.  

There is a label-preserving combinatorial map from $D$ to $\Sigma_{\G}$ with the cellulation by big squares. 
Under this map, 
edges and vertices of $D$ go to edges and vertices of $\Cayley{\G}$, which is the 1-skeleton of the cellulation by big squares.
We may assume that each $2$-cell in $D$ is a square, since any $2$-cell with boundary label of the form $s^2$ maps to an edge of $\Sigma_\G$ and can therefore be collapsed to an edge in $D$.
Thus the map takes each $2$-cell of $D$ homeomorphically to a big square of 
$\Sigma_{\G}$.   Further, if we metrise each square of $D$ as $[0,1] \times [0,1]$, then we can arrange that the restriction of this map to a square of $D$ is an isometry onto its image big square.  

We will work primarily with a cell structure on $D$ that is dual to the one just described.  We first define \emph{walls of $D$}, record some of their properties, and then use them to define the dual structure on $D$. 
The dual structure is then used to divide $D$ into \emph{strips}, and we will show that the length of a strip is a lower bound on the length of $\eta$.  We then finish the proof by inductively estimating the lengths of the strips.  The fact that $\G$ is not \cfs\ is used to show that the lengths of strips grow quadratically.

\subsection*{Walls of $D$}
Recall that each big square in $\Sigma_{\G}$ is subdivided into four small squares by a pair of (segments of)
walls which intersect at the centre of the big square.  For each square in $D$, we pull back this pair of segments to $D$, and label them with the type of the walls they came from.  
The types of the two wall-segments in a square of $D$ are necessarily distinct. 
Now suppose there are two squares in $D$ which share an edge $\epsilon$.  By construction, both squares contain a wall-segment that intersects $\epsilon$ at its midpoint, and these wall-segments must have the same label.  To see this, recall that the image of $\epsilon$ in 
$\Sigma_{\G}$ is the side of a big square, and the midpoint of such a side cannot be the point of intersection of a pair of walls.  Thus, starting at any wall-segment in a square of $D$, one can 
continue it through adjacent squares until it eventually meets $\partial D$.  We call a path constructed in this way a \emph{wall of $D$}, and the type of the wall is the type of any of its 
wall-segments.   
Walls are similar to corridors: 
if one ``fattens up'' a wall of type $a$ by taking the union of the squares containing its individual wall-segments, then one has an 
$a$-corridor of $D$.  

Two walls of $D$ intersect each other at most once;  they intersect only if their types commute and are distinct.   A wall of $D$ cannot intersect itself, as this would require there to be a square in $D$ in which both the wall-segments have the same type.  Thus each wall of $D$ is an embedded interval connecting a pair of points on $\partial D$.  We record the following observation for future use.

\begin{obs}\label{obs:D-walls}
Every wall of $D$ has at least one endpoint on $\eta$. 

To see this, recall from the first paragraph of this proof that in $\Sigma_{\G}$, and therefore in $D$, any 
wall intersecting $\beta_{[0,k]}$ is disjoint from both 
$\beta_{[k,r]}$ and $\alpha_{[0,r]}$.  
Thus any wall in $D$ with an endpoint on $\beta_{[0,k]}$ has its other endpoint on $\eta$.  
Now suppose there is a wall $P$ in $D$ with one endpoint on $\alpha$ and the other on $\beta_{[k,r]}$.  Then $P$ separates $D$,
putting $\beta_{[0,k]}$ and $\eta$ in different components.  This implies that every wall with an 
endpoint on $\beta_{[0,k]}$ intersects $P$.  
However, one of these walls has the same type as $P$, since 
$\beta_{[0,k]}$, which is labelled by $w$, has as its support the full vertex set of $\G$.  This is a contradiction.
\end{obs}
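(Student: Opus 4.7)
The plan is to enumerate the possible locations of the two endpoints of a wall $P$ of $D$ on $\partial D = \alpha_{[0,r]} \cup \eta \cup \beta_{[0,k]} \cup \beta_{[k,r]}$, and rule out every configuration in which neither endpoint lies on $\eta$. The cases where both endpoints lie on the same geodesic sub-arc (both on $\alpha_{[0,r]}$, both on $\beta_{[0,k]}$, or both on $\beta_{[k,r]}$) are immediate: a wall of $D$ maps under the label-preserving map to a wall of $\Sigma_\G$, and its two endpoints in $D$ would map to two crossings of that wall by the geodesic in question, contradicting the criterion from Section~\ref{sec:paths} that a geodesic crosses each wall at most once.

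Next I would handle the configurations involving $\beta_{[0,k]}$. The first paragraph of the proof of Lemma~\ref{lem:vanKampen} established that in $\Sigma_\G$ every wall intersecting $\beta_{[0,k]}$ is disjoint from both $\alpha_{[0,r]}$ and $\beta_{[k,r]}$. Pulling this back to $D$ instantly kills walls of $D$ with endpoints on $(\alpha,\beta_{[0,k]})$ or on $(\beta_{[0,k]},\beta_{[k,r]})$; so any wall of $D$ meeting $\beta_{[0,k]}$ necessarily has its other endpoint on $\eta$.

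The remaining and genuinely delicate case is a wall $P$ of $D$ joining a point on $\alpha_{[0,r]}$ to a point on $\beta_{[k,r]}$. My plan here is a separation argument combined with the type restrictions for intersecting walls. Since $D$ is a closed disk and $P$ is an embedded arc with endpoints on $\partial D$, $P$ cuts $D$ into two components; using the cyclic order of the sub-arcs $\alpha_{[0,r]}$, $\eta$, $\beta_{[k,r]}$, $\beta_{[0,k]}$ along $\partial D$, the endpoints of $P$ separate $\beta_{[0,k]}$ from $\eta$ into opposite components of $D\setminus P$. Every wall $Q$ of $D$ starting on $\beta_{[0,k]}$ (which by the previous paragraph must end on $\eta$) is therefore forced to cross $P$. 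Since two walls of $D$ can intersect only if their types commute and are distinct, the type of $P$ cannot occur as the type of any wall meeting $\beta_{[0,k]}$ — contradicting the fact that, by Definition~\ref{def:gamma}, the word $w$ labelling $\beta_{[0,k]}$ was chosen to visit every vertex of $\G^c$, so that its support is the full generating set $S$ and in particular contains the type of $P$. I expect the main obstacle to be making the separation step ``$P$ puts $\beta_{[0,k]}$ and $\eta$ in opposite components'' fully rigorous, but this should follow directly from the Jordan arc theorem for the disk $D$ once one checks that $P$'s endpoints land in the correct two of the four sub-arcs of $\partial D$.
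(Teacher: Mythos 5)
Your proposal is correct and follows essentially the same route as the paper: the disjointness of walls through $\beta_{[0,k]}$ from $\alpha$ and $\beta_{[k,r]}$ (established in the first paragraph of the proof of Lemma~\ref{lem:vanKampen}) handles those cases, and the remaining case of a wall from $\alpha$ to $\beta_{[k,r]}$ is killed by the same separation-plus-type argument using that the support of $w$ is all of $S$. The only difference is that you explicitly dispose of walls with both endpoints on a single geodesic sub-arc, which the paper leaves implicit via the fact that geodesics cross each wall at most once.
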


\begin{figure}
\begin{overpic}[scale=1.4,unit=1mm]%
{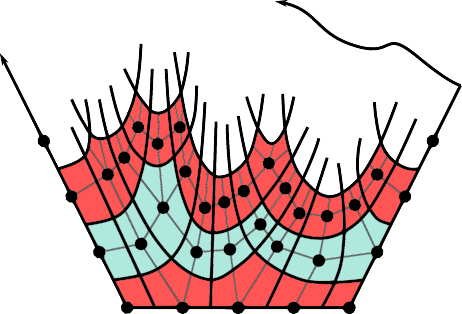}
\put(-1,46){$\alpha$}
\put(99,43){$\beta_{[k,r]}$}
\put(80,61){$\eta$}
\put(48,-4){\small $\beta_{[0,k]}$}
\put(27,-3){$e$}
\put(78,1){$S_0$}
\put(83,10){$S_1$}
\put(88,21){$S_2$}
\end{overpic}
\caption{The van Kampen diagram $D$. The light edges and bold dots are the $1$-cells and $0$-cells respectively, of the original cell structure on $D$.  The walls of $D$, which bound the dual $2$-cells, are shown in bold.  The strips are shaded. }\label{fig:strips}
\end{figure}

\subsection*{The dual cell structure on $D$}
We now define the dual structure on $D$.
Its $1$-skeleton is the union of the walls of $D$, together with $\partial D$; see Figure~\ref{fig:strips}.  
Thus the vertices are points of intersection of a pair of walls 
(i.e.~centres of squares in the original structure) or points of intersection of a wall with $\partial D$.  
Removing the vertices from the $1$-skeleton yields several components; the edges are the closures of these components.  
The 
$2$-cells are the closures of the components of the complement of the $1$-skeleton in $D$.  
We use the terms \emph{dual cells} and \emph{original cells} to distinguish between cells from the two structures on $D$. 
A dual cell is called a \emph{boundary cell} if it intersects $\partial D$.  Otherwise it is 
called an \emph{interior cell}.   
Since $\G$ is a 
triangle-free graph, it is easy to see that the boundary of any interior dual $2$-cell is a polygon with at least four sides.

\subsection*{Strips in $D$}

We now use the dual structure to define strips $S_i$ in $D$, for  
$0 \le i <(r-k)/2$. 

Define the $0$th strip $S_0$ to be the union of all the dual $2$-cells 
intersecting $\beta_{[0,k]}$.  
Define the \emph{top boundary} $B_0$ of $S_0$, by $B_0 = 
\partial S_0 \setminus \partial D$.   
Let  $\epsilon_{\alpha(j)}$ (respectively $\epsilon_{\beta(j)}$) 
denote the dual edge of $\partial D$ containing the original 
vertex
$\alpha(j)$ (respectively $\beta(j)$).  Observe that: 

\begin{enumerate}
\item $S_0$ is connected and 
consists of an ordered collection of dual 2-cells, each intersecting the previous one in a dual edge, and going from $\epsilon_{\alpha(0)}$ to $\epsilon_{\beta(k)}$.  

\item If $Q$ is a wall that forms part of $B_0$, then $S_0$ is contained in a single component of 
$D \setminus Q$.  

\item $B_0$ is connected, and all but the first and last dual edges of $B_0$ are interior edges. 
\end{enumerate}

Note that (1) 
follows from the fact that every edge of $\beta_{[0,k]}$ is part of a $2$-cell, and that $D$ is homeomorphic to a disk.  
If (2) fails, then $Q$ crosses $S_0$ and has an endpoint on $\beta_{[0,k]}$. 
On the other hand, since it is part of $B_0$, it contributes to the boundary of a boundary $2$-cell, and two of the 
boundary edges of this $2$-cell are parts of walls $P_1$ and $P_2$ which intersect $\beta_{[0,k]}$.  In order 
to intersect $S_0$, the wall $Q$ must cross either $P_1$ or $P_2$.  This is a contradiction, since by construction, no two walls with endpoints on $\beta_{[0,k]}$ intersect each other.
Finally, (3) follows from (1), together with the fact that the construction forces $B_0$ to consist solely of parts of walls.

Now suppose $S_{i-1}$ and its top boundary $B_{i-1}$ have been defined, with properties analogous to 
(1)-(3) above.  In particular, the $2$-cells of $S_{i-1}$ go from $\epsilon_{\alpha(i-1)}$ to 
$\epsilon_{\beta(k+i-1)}$.
Define $S_i$ to be the union of all the dual $2$-cells intersecting $B_{i-1}$.  
Then $S_i$ contains the dual 2-cells whose boundaries contain the edges
$\epsilon_{\alpha(i)}$ and $\epsilon_{\beta(k+i)}$.
 Define the top boundary $B_i$ to be 
$\partial S_i \setminus \{ B_{i-1},  \epsilon_{\alpha(i)}, \epsilon_{\beta(k+i)}\}$.   
We claim that if  $i <(r-k)/2$, then $S_i$ has properties analogous to (1)-(3) above.   

To see (1), note that property (1) for $S_{i-1}$ implies that $S_{i-1}$, and therefore $B_{i-1}$ separates $D$.   
Let $D_i$ be the closure of the component of $D\setminus B_{i-1}$ not containing $S_{i-1}$ (so that $\partial D_i$ consists of $B_{i-1}$ and a part of $\partial D$). 
By property (3) for $B_{i-1}$, all but the first and last dual edges of $B_{i-1}$ are interior edges of $D$,  so every edge of $B_{i-1}$ is part of a $2$-cell in $D_i$ and 
$D_i$ is homeomorphic to a disk.  It follows that $S_i$ is connected and
consists of an ordered collection of dual $2$-cells, each intersecting the previous one in a dual edge, going from $\epsilon_{\alpha(i)}$ to $\epsilon_{\beta(k+i)}$.

An argument involving intersections of walls similar to the $S_0$ case proves property (2) for $S_i$.  

Property (3) would fail for $B_i$ if one of the dual 2-cells of $S_i$ other than the first and the last is a boundary cell, as this would mean that $B_i$ contains part of $\alpha_{[i+1,r]}$, $\beta_{[k+i+1, r]}$, or $\eta$. (Note that 
 $\alpha_{[0, i-1]}$, and $\beta_{[0,k+i-1]}$
cannot be part of $B_i$ since $B_{i-1}$ separates $S_i$ from these parts of $\partial D$.)

 We first rule out 
$\alpha_{[i+1,r]}$ and $\beta_{[k+i+1, r]}$. 
Let $A_i$ denote the wall of $D$ with an 
endpoint 
at the intersection of $\epsilon_{\alpha(i)}$ and $\epsilon_{\alpha(i+1)}$. 
Note that $\alpha_{[i+1,r]}$ cannot cross $A_i$ by construction.  
Now $A_i$ is a part of $B_i$, so by property (2) for $S_i$ it separates $\alpha_{[i+1,r]}$ from
$S_{i}$.  This implies that $S_{i}$ cannot have any boundary cells intersecting $\alpha_{[i+1,r]}$. 
By the same argument, $S_i$ does not have any 
boundary cells intersecting $\beta_{[k+i+1,r]}$.

 The map from $D$ to $\Cayley \G$ takes each 
original vertex contained in a dual cell of $S_{i}$ 
into $B(e,k+2i) \subset \Cayley{\G}$.
To see this observe that each original vertex of $S_0$ is 
mapped into $B(e,k)$,
and for $j>0$, the image of an original vertex in $S_j$ is at most distance two from the image of the vertices of $S_{j-1}$.  So if $i< (r-k)/2$, then the original vertices of $S_{i}$ are mapped into $B(e,r-1)$, and therefore cannot be vertices of $\eta$, which is 
$r$-avoidant.  Thus $S_{i-1}$ does not have any boundary cells intersecting $\eta$.
This shows that all but the first and last 2-cells of $S_i$ are interior cells, which implies (3).


\subsection*{Lengths of strips}
Define the \emph{length} of $S_i$, denoted $\ell(S_i)$, to be the number of interior dual $2$-cells in it.  
\begin{claim} \label{clm:length-eta}
For $i < (r-k)/2$, we have $\ell(S_i) \le \ell(\eta)$.  
 \end{claim}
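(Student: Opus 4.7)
The plan is to apply Observation \ref{obs:D-walls} to transfer the count from walls of $D$ to edges of $\eta$. Since every wall of $D$ has an endpoint on $\eta$, and distinct walls have endpoints at distinct dual vertices on $\partial D$ (each edge of $\eta$ contains exactly one such dual vertex, namely the midpoint of the original edge corresponding to the wall's label), the number of pairwise distinct walls of $D$ arising in any natural family is bounded above by $\ell(\eta)$.

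The natural family is suggested by property (1) of the strip: the cells of $S_i$ form an ordered chain $C_0, C_1, \ldots, C_{N+1}$, with $C_0, C_{N+1}$ the boundary cells and $C_1, \ldots, C_N$ the interior cells (so $N = \ell(S_i)$), where each pair of consecutive cells shares a dual edge. For $0 \le j \le N$, let $W_j$ be the wall of $D$ containing the shared dual edge between $C_j$ and $C_{j+1}$. If I can show that these $N+1$ walls are pairwise distinct, then Observation \ref{obs:D-walls} immediately yields $\ell(\eta) \ge N+1 > N = \ell(S_i)$.

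The distinctness assertion is the heart of the proof. I would argue by contradiction: suppose $W_j = W_{j'}$ for some $j < j'$. Then this wall, as an embedded arc in $D$, would contain two disjoint dual-edge segments in the interior of $S_i$. Using that each shared dual edge between consecutive cells of $S_i$ has its top endpoint on $B_i$ and its bottom endpoint on $B_{i-1}$, the arc $W_j = W_{j'}$ must exit $S_i$ through $B_i$ (respectively $B_{i-1}$) between its two crossings of the strip and re-enter, traversing $S_{i+1}$ or $S_{i-1}$. Combining properties (2) and (3) of the strips with the fact that any two walls of $D$ intersect in at most one point should force $W_j = W_{j'}$ to produce a double intersection with a single wall of $B_i$ or $B_{i-1}$, yielding the desired contradiction.

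The main obstacle is carrying out this distinctness argument rigorously: the ambient topology permits walls to zigzag in principle, so the contradiction must be extracted from a careful analysis of how the embedded arc of $W_j$ interacts with the arcs $B_i$, $B_{i-1}$ and the neighboring strips, rather than from any superficial planarity obstruction.
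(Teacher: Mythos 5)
Your plan hinges on proving that the walls $W_0,\ldots,W_N$ arising from the $N+1$ interior dual edges of $S_i$ are pairwise distinct. That assertion is not actually provable, and your attempted contradiction is a dead end. A wall of $D$ can enter $S_i$ through $B_{i-1}$, exit through $B_i$, wander through higher strips, and re-enter $S_i$ a second time; nothing about the structure of $D$ or the strips forbids this. The paper's own argument makes this explicit: it shows only that a transverse wall $P$ crosses $S_i$ \emph{at most twice} (by the argument you gesture at, using property (2) of $S_i$: once $P$ has crossed the top-boundary wall $Q$ at its second crossing, it cannot cross $Q$ again, hence cannot re-enter $S_i$). Crossing twice remains a live possibility, so $W_j = W_{j'}$ can happen.

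The fix — which is what the paper does — is to count \emph{crossings} rather than distinct walls, and to show that the number of crossings of $S_i$ by a wall equals the number of its endpoints on $\eta$. Observation~\ref{obs:D-walls} gives at least one endpoint on $\eta$; if $P$ crosses $S_i$ twice, one rules out the other endpoint being on $\alpha$ or on $\beta$ (since the top-boundary wall $Q$ at the second crossing would then also need to reach $\alpha$ or $\beta$, forcing two walls with endpoints on $\alpha$, or on $\beta$, to intersect, which is impossible), so both endpoints land on $\eta$. This gives an injection from the $N+1$ transverse crossings into wall-crossings of $\eta$, hence $\ell(\eta) \ge \ell(S_i) + 1$. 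Your scaffolding (the ordered chain of cells and the appeal to Observation~\ref{obs:D-walls}) is the right starting point, but you should discard the distinctness goal and replace it with the ``crossings equal endpoints on $\eta$'' lemma.
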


 \begin{proof}
Let $P$ be a wall of $D$ which is \emph{transverse} to $S_i$, meaning that it crosses
$S_i$ at least once, intersecting both $B_{i-1}$ and 
${B_i}$.   
We now show that $P$ crosses $S_i$ at most twice.   Further, the number of times $P$ crosses $S_i$ is equal to the number of endpoints of $P$ on $\eta$.  

Suppose $P$ crosses $S_i$ at least twice.  
Starting at the endpoint of $P$ on $\eta$ (guaranteed by Observation~\ref{obs:D-walls}), follow $P$ till its second crossing of $S_i$, and let $Q$ 
denote the top boundary wall at the second crossing.  
By property (2) for $S_i$,  we know that $S_i$ is contained in a single component of $D\setminus Q$. 
Thus, 
in order to cross $S_i$ again, $P$ would have to cross $Q$ a second time, 
which is impossible. 
So $P$ crosses $S_i$ at most twice. 

Now suppose the second endpoint of $P$ is on $\alpha$.  Since $Q$ can cross neither $P$ (a second time) nor $S_i$, it must also have an endpoint on $\alpha$.  
 This is a contradiction, 
since by construction, 
two walls with endpoints on $\alpha$ cannot intersect each other. 
 By the same argument, $P$ cannot have 
 an endpoint on $\beta$.  Thus, if $P$ crosses $S_i$ twice, it has two endpoints on $\eta$.
 If $P$ crosses $S_i$ exactly once, then Observation~\ref{obs:D-walls} and the fact that $S_i$ separates $D$ putting $\eta$ in a single component imply that $P$ has exactly one endpoint on $\eta$, completing the proof of the second statement above.  

Thus there is an injective map from the set of transverse intersections of walls with $S_i$ into 
 the set of walls crossed by $\eta$ in $\partial D$.
This proves the claim, as the number of 
such transverse intersections is 
$\ell(S_i)+1$, and the number of walls crossed by $\eta$ in $\partial D$ (and therefore in $\Sigma_{\G}$) is $\ell(\eta)$.
\end{proof}

\subsection*{Lower bounds}
We now inductively obtain lower bounds on the lengths of strips.  
Define an interior dual $2$-cell to be \emph{large} if its boundary has five or more sides.   

\begin{claim}\label{clm:large}
Every strip 
has at least one large $2$-cell.
\end{claim}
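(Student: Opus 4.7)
The plan is to argue by contradiction. Suppose that some strip $S_i$ has no large interior dual $2$-cell, so that every interior dual $2$-cell in $S_i$ has exactly four sides (the minimum forced by triangle-freeness of $\G$).

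The first step is the observation that a $4$-sided interior dual $2$-cell at an interior vertex $v$ of the original cell structure of $D$ corresponds to a $4$-cycle $C_v$ of $\G$, namely the link of $v$ in $D$; the four sides of the cell are wall-segments whose types are the four vertices of $C_v$. The second step is a local analysis showing that if two $4$-sided interior dual $2$-cells at vertices $v_1,v_2$ of $D$ share a dual edge, then $C_{v_1}$ and $C_{v_2}$ are adjacent as vertices of $\G^4$. Indeed, the shared dual edge is a wall-segment of some type $a$ inside a big square $Q$ having $v_1,v_2$ as adjacent corners; examining $Q$ together with the other big square across this shared wall-segment yields two edges $ab$ and $ac$ of $\G$ lying in both $C_{v_1}$ and $C_{v_2}$, and these two edges are adjacent in $\G$, so $C_{v_1}$ and $C_{v_2}$ are joined by an edge of $\G^4$. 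Since a strip is an ordered chain of dual $2$-cells sharing consecutive dual edges, these observations combine to show that the $4$-cycles associated to the interior cells of $S_i$ lie in a single connected subgraph $T$ of $\G^4$.

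To conclude, I plan to argue that the support of $T$ is the full vertex set of $\G$, contradicting the assumption that $\G$ is not \cfs. For the base strip $S_0$, this is because each wall perpendicular to an edge of $\beta_{[0,k]}$ has a type among $a_1, \dots, a_k$, and these types exhaust $S$ by construction of $w$; moreover each such wall bounds interior cells of $S_0$ beyond its first intersection with another wall, so every generator appears as a vertex in some $4$-cycle of $T$. For strips $S_i$ with $i \ge 1$ one propagates this full-support property inductively using the relationship between $B_{i-1}$ and the walls entering $S_i$.

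The main obstacle is precisely this propagation of full support. One must verify that every generator of $S$ is incident to an interior $4$-sided cell of the strip, rather than being carried only by walls bounding boundary cells of the strip (to which our $4$-cycle analysis does not directly apply). Once full support is established, $T$ provides a subgraph of $\G^4$ whose support equals the vertex set of $\G$, and hence a component of $\G^4$ with full support. This contradicts the non-\cfs\ hypothesis on $\G$ and completes the proof.
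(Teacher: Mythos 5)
Your approach is genuinely different from the paper's, and the obstacle you flag at the end is a real gap, not just a technicality to be routine-checked.

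Steps 1--4 of your outline are sound: a $4$-sided interior dual $2$-cell around an interior vertex $v$ of $D$ does give a $4$-cycle $C_v$ in $\G$ (consecutive wall-segments around $v$ have distinct commuting types, so the link of $v$ maps to an embedded $4$-cycle), and your local analysis correctly shows that two such cells sharing a dual edge yield $4$-cycles sharing the pair of adjacent edges $(a,b)$ and $(a,c)$, hence adjacent (or equal) vertices of $\G^4$. Since property (1) of the strips makes $S_i$ a chain of cells each sharing a dual edge with the next, the $4$-cycles from the interior cells of $S_i$ do land in a single connected subgraph $T$ of $\G^4$. So far this is a nice reformulation that works directly with $\G^4$ rather than with a reconstructed subgraph of $\G$.

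The problem is exactly where you put it: you need every generator of $S$ to label a wall that passes through some \emph{interior} cell of $S_i$, and you have no mechanism that forces this. The paper's argument does not restrict attention to interior cells. Its reconstructed subgraph takes as vertices the types of \emph{all} walls meeting $S_i$ (transverse walls plus walls appearing in $B_{i-1}$ or $B_i$), and edges whenever two such walls intersect inside $S_i$. Full support then comes for free: by Observation~\ref{obs:D-walls}, every wall with an endpoint on $\beta_{[0,k]}$ has its other endpoint on $\eta$, so it is transverse to $S_i$ (since $S_i$ separates $\eta$ from $\beta_{[0,r]}$), and since $\beta_{[0,k]}$ is labelled by $w$, whose support is all of $S$, every generator appears. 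What makes the paper's version close is the preceding structural observation: if $S_i$ has no large interior cell, then $S_i$ is a single row of squares or a chain of such rows meeting at right angles, and one checks directly that this forces the reconstructed subgraph of $\G$ (with \emph{all} those wall-labels as vertices, including the ones carried only by the two boundary cells) to be a join or \cfs\ subgraph. Your version, by contrast, only sees the types that appear in the $4$-cycles of interior cells, and a transverse wall of type, say, $s$ might enter $S_i$ through $B_{i-1}$ and exit through $B_i$ entirely inside the first or last (boundary) cell; then $s$ would never appear in any $C_v$, and your subgraph $T$ could have strictly smaller support than the paper's. There is no obvious parity or separation argument ruling this out, so filling this gap seems to require either returning to the paper's ``count all walls'' formulation, or a separate argument that the walls from $\beta_{[0,k]}$ are forced into interior cells of $S_i$, which you have not supplied.
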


\begin{proof}
If not, then there is a strip $S_i$ built entirely out of squares.  There are two possibilities: 
either this strip consists of a single row of squares, or it consists a sequence of such rows of 
squares, with each such row connected to the next at right angles as in Figure~\ref{fig:square-strips}.
\begin{figure}\label{fig:square-strips}

\includegraphics{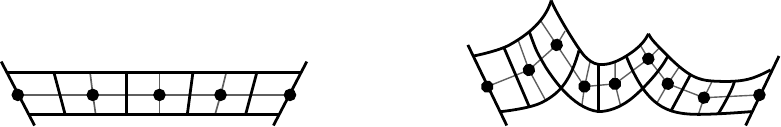}
\caption{A strip which has no large 2-cells is either a single row of squares (left) or a sequence of such rows (right).}
\end{figure}

Since two walls intersect only if the corresponding generators commute, it is possible to reconstruct a subgraph 
of $\G$ using $S_i$, as follows.  The vertices of this subgraph are the labels of the walls which meet $S_i$ (either transversely or as part of $B_i$ or $B_{i-1}$).  We add an edge between two such vertices of $\G$ whenever the corresponding walls intersect in $S_{i}$.  It is easy to see that a single row of squares reconstructs a join subgraph, while a sequence of 
rows of squares meeting at right angles reconstructs a \cfs\ subgraph.  
Every wall which has an endpoint on $\beta_{[0,k]}$ crosses $S_i$, since 
by Observation~\ref{obs:D-walls} its other endpoint is on $\eta$, and $S_i$ separates $\eta$ from $\beta_{[0,r]}$. 
Thus
the generators corresponding to walls with endpoints on $\beta_{[0,k]}$ are vertices of the subgraph constructed above.  But the support of 
$\beta_{[0,k]}$ is the entire vertex set of $\G$, so we obtain a \cfs\ subgraph of $\G$ which 
uses all the vertices of $\G$.  Then $\G$ itself is \cfs, by Observation~\ref{obs:cfs}.  This is a contradiction. 
\end{proof}

An interior dual $2$-cell in $S_i$ intersects $S_{i-1}$ in either an edge or a vertex.  Define the 
$2$-cell to be \emph{skew} if this intersection is a vertex.   Let $u_i$ denote the number of skew $2$-cells in 
$S_i$.

\begin{claim}\label{clm:ui}
For $1 \le i < (r-k)/2$, we have $u_i \ge i$ .
\end{claim}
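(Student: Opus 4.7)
The plan is to prove $u_i \geq i$ by induction on $i$, using the convention $u_0 = 0$ and showing $u_i \geq u_{i-1} + 1$ at each step.

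The core mechanism is that skew cells propagate from one strip to the next. If $C \in S_{i-1}$ is a skew cell, then by definition $C$ has no edges on $B_{i-2}$; being an interior $d_C$-gon with $d_C \geq 4$ (since $\G$ is triangle-free, so interior dual $2$-cells correspond to link cycles in $\G$ of length at least four) and exactly two edges shared with its strip-neighbors, $C$ must therefore have $d_C - 2 \geq 2$ edges on $B_{i-1}$. These consecutive top edges of $C$ meet in $d_C - 3 \geq 1$ interior vertices, and at each such vertex $C$ alone occupies the ``inside'' of the turn, producing a convex corner of $B_{i-1}$ on the $S_{i-1}$-side. Each such convex corner then produces a distinct skew cell of $S_i$: namely, the cell opposite $C$ across the corner, which meets $S_{i-1}$ only at that single vertex. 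This immediately yields $u_i \geq u_{i-1}$.

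To obtain the extra $+1$, I would invoke Claim~\ref{clm:large} applied to $S_{i-1}$ (or to $S_1$ in the base case $i=1$, where $u_0 = 0$), producing a large interior cell $L$ with $d_L \geq 5$ sides. If $L$ itself is skew, its top edge count becomes $d_L - 2 \geq 3$, so $L$ alone contributes at least $d_L - 3 \geq 2$ convex corners to $B_{i-1}$---one more than what a generic skew cell contributes---yielding the desired extra skew cell of $S_i$. If $L$ is non-skew with at least two top edges on $B_{i-1}$, then $L$'s convex corners are disjoint from those arising from the $u_{i-1}$ skew cells of $S_{i-1}$, again giving the extra one. The main obstacle, which I expect to be the hardest step, is the remaining case in which $L$ is non-skew with exactly one top edge on $B_{i-1}$: this forces $L$ to have at least two bottom edges on $B_{i-2}$, so $L$'s corners lie on $B_{i-2}$ rather than $B_{i-1}$ and do not directly produce the required extra skew cell of $S_i$. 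Overcoming this case most likely requires either strengthening Claim~\ref{clm:large} to guarantee a large cell with at least two top edges on $B_{i-1}$, or accumulating contributions from the large cells in all earlier strips $S_j$ for $j < i-1$, or exploiting the non-\cfs\ hypothesis more globally along the span of $B_{i-1}$ between $\epsilon_{\alpha(i-1)}$ and $\epsilon_{\beta(k+i-1)}$.
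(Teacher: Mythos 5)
Your approach is essentially the paper's: skew cells propagate forward via their convex top-boundary corners (a skew $j$-gon in $S_{i-1}$ contributes $j-3\ge 1$ skew cells to $S_i$), and Claim~\ref{clm:large} supplies a large cell whose extra corner gives the $+1$. The case split into ``$L$ skew'' and ``$L$ non-skew'' is also exactly what the paper does. Your base case is stated a bit loosely; the paper handles $u_1\ge 1$ directly from Observation~\ref{obs:D-walls} (which forces $B_0$ to consist of more than one wall, hence to have a corner), without invoking Claim~\ref{clm:large}.

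The ``hard remaining case'' you flag — a non-skew large cell $L$ with exactly one top edge and hence at least two bottom edges on $B_{i-2}$ — does not actually arise, and the missing ingredient is the statement the paper records just before the claim: \emph{an interior dual $2$-cell in $S_i$ intersects $S_{i-1}$ in either an edge or a vertex}, i.e.\ a single edge or a single vertex. This is built into the strip structure (properties (1)--(3) for $S_{i-1}$ together with the fact that two walls of $D$ meet at most once). Granting it, a non-skew interior $j$-gon in $S_{i-1}$ has exactly one bottom edge on $B_{i-2}$ and exactly two side edges shared with its strip neighbours, so it has $j-3$ top edges and contributes $j-4$ skew cells to $S_i$; when $j\ge 5$ this is $\ge 1$, and these corners are disjoint from those coming from skew cells, giving $u_i\ge u_{i-1}+1$. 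So your proposal is correct in outline, but you should cite the single-edge-or-vertex fact to close the case you were stuck on rather than looking for a strengthened Claim~\ref{clm:large} or a global argument.
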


\begin{proof}
To see that $u_1\ge 1$, note that $B_0$ cannot consist of a single wall, by Observation~\ref{obs:D-walls}.  
So it contains at least one pair of walls that intersect at a point and then pass through $S_1$, giving rise to a skew $2$-cell whose closure intersects $S_0$ in the point of intersection of the walls.

For the inductive step, observe that a skew $2$-cell in $S_{i-1}$ whose boundary is a 
$j$-gon gives rise to $j-3$ skew $2$-cells in $S_{i}$.  Since each interior dual $2$-cell has at least $4$ sides,  
$j-3 \ge 1$. 
Similarly, a non-skew large $2$-cell in $S_{i-1}$ whose boundary is a $j$-gon gives rise to $j-4 \ge 1$ skew $2$-cells in $S_{i}$. 
By Claim~\ref{clm:large}, every strip has at least one large $2$-cell.    
Now if one of the skew cells in $S_{i-1}$ is large, it gives rise to at least two skew cells in $S_{i}$, and we have 
$u_{i} \ge u_{i-1} +1$.  Otherwise there is a non-skew large cell in $S_{i-1}$, which gives rise to a skew cell in 
$S_{i}$ which does not come from a skew cell of $S_{i-1}$, and we have the same relation.  It follows that 
$u_i \ge i$ for $1 \le i < (r-k)/2$.  
\end{proof}

There is a map from the $2$-cells of $S_{i}$ to the $2$-cells of $S_{i-1}$ defined as follows.  The image of a skew 
$2$-cell $c$ is the unique $2$-cell in $S_{i-1}$ which shares a vertex with $c$. 
The image of a non-skew $2$-cell $c$ is the unique $2$-cell of $S_{i-1}$ which shares an edge with $c$.  
This is surjective by property (1) for $S_i$. 
The cardinality of the preimage is at least 1 for a non-skew $2$-cell, and at least $3$ for a skew $2$-cell of $S_{i-1}$.  Thus one has the relation $\ell(S_{i}) \ge \ell(S_{i-1}) + 2u_{i-1}$, since the length of a strip is the number of  interior $2$-cells in it.   Then, using Claim~\ref{clm:ui}, we have:

$$
\ell(S_{i}) \ge \ell(S_{i-1}) + 2 u_{i-1} \ge \cdots \ge  \sum_{j=1}^{i} 2 u_j \ge 2 \sum_{j=1}^{i} j \ge 
(i)(i+1) \ge i^2. 
$$
Finally, if $r>2k$, then $r/4 < (r-k)/2$, and by Claim~\ref{clm:length-eta}, we have 
$ \ell(\eta) \ge \ell(S_{r/4}) \ge \frac 1{16} r^2.$
\end{proof}

\section{Higher-degree polynomial divergence in right-angled Coxeter groups}\label{sec:higher-degree}

We now prove Theorem~\ref{thm:all degrees} of the introduction, by producing examples to show that the divergence of a 2-dimensional right-angled Coxeter group can be a polynomial of any degree. More precisely, if $\G_d$ is the sequence of graphs shown in 
Figure~\ref{fig:poly-graph} ($d \ge 1$) then we show that $\div{\G_d}(r) \simeq r^d$. 
We prove the upper and lower bounds on $\div{\G_d}(r)$ 
in Propositions~\ref{prop:poly-upper} and~\ref{prop:poly-lower} respectively.

\begin{figure}
\begin{overpic}[scale=1]%
{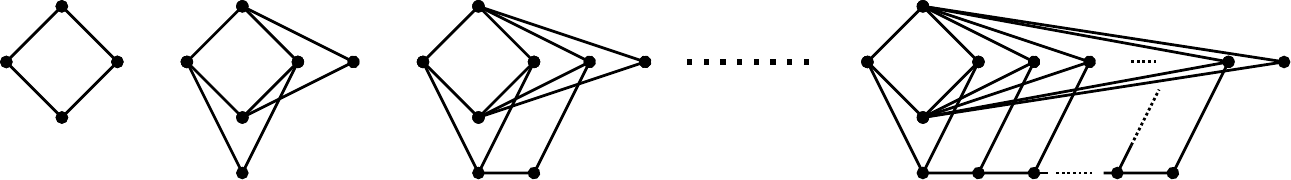}
\put(70.5,3){\tiny $b_0$}
\put(64.5, 9){\tiny $b_1$}
\put(70.5,-1.5){\tiny $b_2$}
\put(75,-1.5){\tiny $b_3$}
\put(79,-1.5){\tiny $b_4$}
\put(85,-1.5){\tiny $b_{d-1}$}
\put(91,-1.5){\tiny $b_d$}
\put(70.5,14.5){\tiny $a_0$}
\put(76.5,9){\tiny $a_1$}
\put(80.8,9){\tiny $a_2$}
\put(85,9){\tiny $a_3$}
\put(95,7.25){\tiny $a_{d-1}$}
\put(100.5,9){\tiny $a_{d}$}
\put(4, 16){\small $\G_1$}
\put(18,16){\small $\G_2$}
\put(36,16){\small $\G_3$}
\put(82, 16){\small $\G_d$}
\end{overpic}
\caption{}
\label{fig:poly-graph}
\end{figure}

\begin{prop}\label{prop:poly-upper}
$\div{\G_d} (r) \preceq r^d$.
\end{prop}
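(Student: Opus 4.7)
The plan is to proceed by induction on $d$. The base case $d=1$ follows from Theorem~\ref{thm:linear-div}, since $\G_1$ is a join (so $W_{\G_1}$ has linear divergence). For the inductive step, I will exploit the recursive structure of $\G_d$: the graph $\G_d$ contains $\G_{d-1}$ as an induced subgraph (delete the two "new" vertices $a_d, b_d$), and the vertices $a_d, b_d$ together with $a_{d-1}, b_{d-1}$ form a new four-cycle $C_d$ which shares an edge with a four-cycle of $\G_{d-1}$. In particular, the special subgroup $W_{\G_{d-1}} \le W_{\G_d}$ is honest, and each coset $gW_{\G_{d-1}}$ spans an isometrically embedded copy of $\Sigma_{\G_{d-1}}$ inside $\Sigma_{\G_d}$. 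By induction, divergence within each such subcomplex is at most $Cr^{d-1}$. Moreover the flat $\Sigma_{C_d}$ meets each adjacent $\Sigma_{\G_{d-1}}$-subcomplex in an infinite band (a copy of $\Sigma_{C_d \cap \G_{d-1}}$), as in Remark~\ref{rmk:davis}.

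Given geodesic rays $\alpha,\beta$ based at $e$ in $\Cayley{\G_d}$, I want to build an $r$-avoidant path from $\alpha(r)$ to $\beta(r)$ of length at most $C'r^d$. Mimicking the piecewise construction in the proof of Proposition~\ref{prop:cfs-quadratic-upper}, I first decompose $\alpha_{[0,r]}$ and $\beta_{[0,r]}$ into maximal "pieces", each of whose support lies inside a single copy of $\G_{d-1}$ or inside $C_d$. At each piece boundary I replace the tail of the current geodesic by a geodesic ray running inside the intersection band between the adjacent flats, so that the new ray lands in the next $\Sigma_{\G_{d-1}}$-subcomplex while still passing through the sphere of radius $r$. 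Within a single $\Sigma_{\G_{d-1}}$-subcomplex I apply the inductive hypothesis to connect consecutive re-routed rays at radius $r$ by an $r$-avoidant path of length at most $Cr^{d-1}$. Between consecutive subcomplexes I use a short linear-length detour in $\Sigma_{C_d}$, analogous to the "join step" in the \cfs\ argument.

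The number of pieces is bounded linearly in $r$, since each piece has positive length. Concatenating the resulting segments yields an $r$-avoidant path whose length is at most
\[
(\text{number of pieces}) \cdot \max\{\, Cr^{d-1},\; 2r\,\} \;\le\; C'r^d,
\]
as required.

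The main obstacle will be verifying that the re-routing step is both legal and avoidant: one must produce the "transfer" geodesics lying in the intersection bands so that their concatenation with the existing initial segments remains a geodesic (so that inductive hypotheses apply to rays based at $e$), and one must check that every subsegment used stays outside $B(e,r)$. This requires a careful analysis of walls and their intersections at each piece boundary, in the spirit of the base-case construction in Proposition~\ref{prop:cfs-quadratic-upper}. As noted in Remark~\ref{rmk:thick}, this upper bound can alternatively be derived from the fact that $W_{\G_d}$ is thick of order $d-1$ via the Behrstock--Dru\c{t}u machinery; the direct approach above has the advantage of producing explicit avoidant paths to parallel the lower bound in Proposition~\ref{prop:poly-lower}.
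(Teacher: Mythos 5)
Your overall scheme (induct on $d$, decompose the geodesics at the places where they involve the new generators $a_d, b_d$, and apply the inductive hypothesis to segments living in copies of $\Cayley{\G_{d-1}}$) is the same strategy the paper uses. However, there is a genuine gap: your description of the structure of $\G_d$ is wrong in a way that breaks the proposed rerouting step.

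You claim that $\{a_d, b_d, a_{d-1}, b_{d-1}\}$ forms an embedded four-cycle $C_d$. But from the very construction of $\G_d$ (as used in the paper's proofs of both bounds), the vertex $a_d$ is adjacent \emph{only} to $a_0$ and $b_0$, while $b_d$ is adjacent only to $a_{d-1}$ and $b_{d-1}$; in particular $a_d$ is not adjacent to $a_{d-1}$, $b_{d-1}$, or $b_d$. Thus the induced subgraph on $\{a_d, b_d, a_{d-1}, b_{d-1}\}$ is just a path of length two, not a four-cycle. Worse, for $d \ge 3$ the neighbourhoods of $a_{d-1}$ and $b_{d-1}$ intersect only in $\{b_d\}$, so there is \emph{no} embedded four-cycle in $\G_d$ containing $b_d$ at all, hence no flat $\Sigma_{C_d}$ and no ``intersection band between adjacent flats'' of the kind your rerouting step relies on. The transitions where $\alpha$ crosses a wall of type $a_d$ or $b_d$ must instead be handled by running along those walls themselves (i.e.\ using geodesic rays with support $\{a_0,b_0\}$ or $\{a_{d-1},b_{d-1}\}$, which live in trees, not flats), and the passage from one coset of $W_{\G_{d-1}}$ to the next happens across a single edge of the Cayley graph, not across a two-dimensional band. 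This is exactly what the paper's construction of the rays $\lambda_i, \nu_i, \mu_i$ does. Your high-level count (linearly many pieces, each contributing at most $Cr^{d-1}$) is correct, but the substance of the proof — producing the avoidant transfer paths and verifying that the concatenations remain geodesics based at $e$ — depends on getting the local wall/flat structure right, which your proposal does not.
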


\begin{proof} 
Observe that the statement is true for $d=1$ and $2$, as
$\G_1$ is a join, and $\G_2$ is a \cfs\ graph.
We proceed by induction on $d$.  
Assume that there is a constant $C$ such that if $\mu$ and $\nu$ are arbitrary geodesic rays based at $e$ in 
$\Cayley{{\G_{d-1}}}$, then 
$\avoid{\mu(r)}{ \nu(r)} \le Cr^{d-1}$ for any $r$.

Now let $\alpha$ and $\beta$ be an arbitrary pair of geodesic rays based at $e$ in 
$\Cayley{{\G_d}}$. 
If neither of them crosses any walls of type $a_d$ or $b_d$, then they 
actually lie 
in the copy of $\Cayley{{\G_{d-1}}}$ based at $e$, 
and the induction hypothesis yields the desired avoidant path.

Thus we may assume that at least one of them, say $\alpha$, crosses a wall of type $a_d$ or $b_d$.   
Let $H_1, \dots, H_k$ be the ordered set of walls of type $a_d$ or $b_d$ that $\alpha$ crosses between $e$ and $\alpha(r)$, and let $x_i$ denote the type of $H_i$.  Then the label on 
$\alpha_{[0,r]}$ is $w_1x_1w_2x_2\dots w_kx_kw_{k+1}$, where each $w_i$ is a 
(possibly empty) word in the letters $a_0,a_1, \dots a_{d-1}, b_0, b_1, \dots b_{d-1}$, and each $x_i$ is 
$a_d$ or $b_d$.  
For $1 \le i \le k$, let $g_i$ denote the word $w_1x_1w_2x_2\dots w_i$.  
Then there exists a geodesic ray $\lambda_i$ emanating from $g_i$ with the following properties:
\begin{enumerate}
\item The path emanating from $e$ consisting of the segment labelled $g_i$ followed by $\lambda_i$ is a geodesic.

\item The geodesic $\lambda_i$ 
runs along $H_i$.  
(That is, the support of $\lambda_i$ is either $\{a_0, b_0\}$ or $\{a_{d-1}, b_{d-1}\}$, depending on whether 
$x_i$ is $a_d$ or $b_d$, respectively.)
\end{enumerate}
If $x_i=a_d$, the label of $\lambda_i$ must be of the form $a_0b_0a_0b_0\dots $ or $b_0a_0b_0a_0\dots$.  Choose the former if the projection of $g_i$ to the group $\langle a_0, b_0\rangle$ ends with $b_0$ 
and the latter otherwise.  This guarantees that there is no cancellation when $g_i$ is concatenated with the label of $\lambda_i$.  
The case $x_i=b_d$ is similar.

For $1 \le i \le k$, let $\nu_i$ be the geodesic ray emanating from $g_ix_i$ with the same label as $\lambda_i$.  (See Figure~\ref{fig:poly-upper}.)
For $0 \le i \le k-1$, let $\mu_i$ be the geodesic ray emanating from $g_ix_i$ (or $e$ when $i=0$) consisting of the segment with label $w_{i+1}$ followed by $\lambda_{i+1}$.    The choice of the 
$\lambda_{i}$ guarantees that these are geodesics.   Finally define $\mu_k$ to be the infinite part of $\alpha$ 
emanating from $g_kx_k$.

If $\beta$ does not cross any walls of type $a_d$ or $b_d$, then define $\mu_0'=\beta$.  Otherwise define $H_1', \dots H_l'$, as well as $x_i', g_i', u_i', \nu_i'$, and $\mu_i'$ analogous to the corresponding objects for $\alpha$.  

By construction, the supports of $\nu_i, \mu_i, \nu_i', \mu_i'$ are contained in $\{a_0,a_1, \dots a_{d-1}, b_0, b_1, \dots 
b_{d-1}\}$.  Thus there exist paths 
$\eta_i$ connecting $\nu_i(2r)$ and $\mu_i(2r)$ with length at most 
$C(2r)^{d-1}$,
which avoid a ball of 
radius $2r$ based at $g_i x_i$, and therefore avoid a ball of radius $r$ based at $e$.  Similarly, there are 
$r$-avoidant  paths $\eta_i'$ and $\eta_0$ connecting $\nu_i'(2r)$ and $\mu_i'(2r)$ and $\mu_0(2r)$ and 
$\mu_0'(2r)$ respectively, each with length at most $C(2r)^{d-1}$.

For each $i$,  the points 
$\mu_i(2r)$ and $\nu_{i+1}(2r)$ are connected by an edge, as are $\mu_i'(2r)$ and 
$\nu_{i+1}'(r)$.
Using these $k+l$ 
edges to connect $\eta_i$, $\eta_i'$ and $\eta_0$, 
one obtains an $r$-avoidant path between 
$\mu_k(2r)$ and $\mu_k'(2r)$.  Finally, $\eta$ is constructed by attaching the segment of $\alpha$ 
from $\alpha(r)$ to $\mu_k(2r)$ and the segment of
$\beta$ from $\beta(r)$ to $\mu_k'(2r)$, each with length at most $2r$.
Since $k$ and $l$ are at most $r$, we have:
$$\ell(\eta)\le 4r + k+l + (k+l+1)C(2r)^{d-1} \le 6r  + (2r+1)C2^{d-1}r^{d-1} \le C' r^d,$$
 where $C' = 6+2^{d+1}C$.
 \begin{figure}
\begin{overpic}[scale=.75]{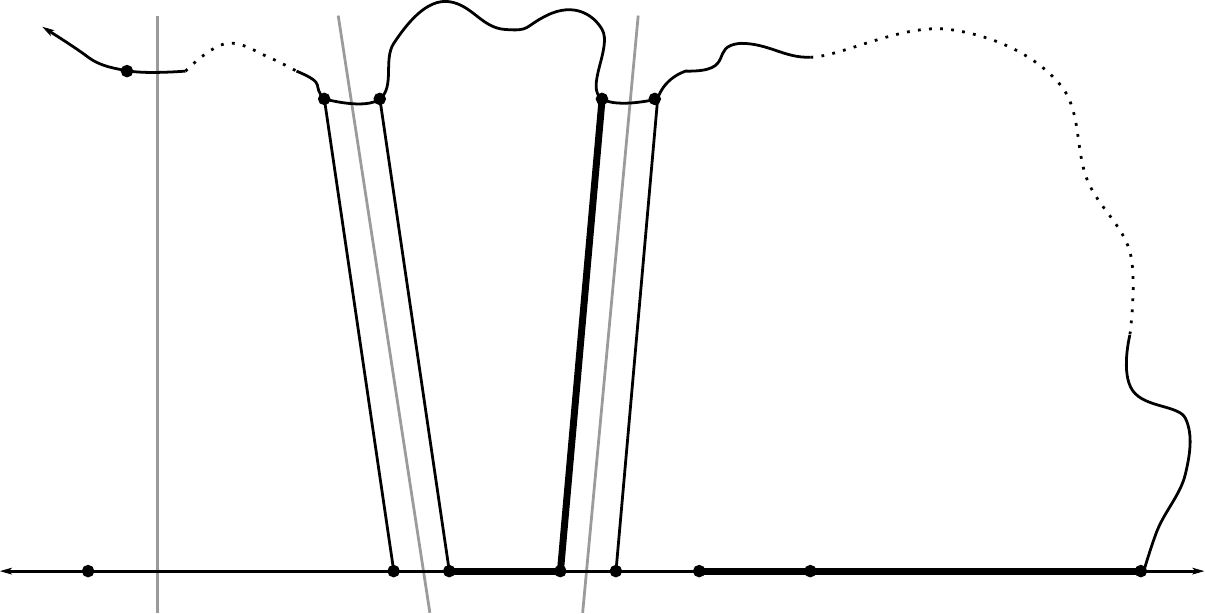}
\put(0,0){\small $\beta$}
\put(6,0){\small $e$}
\put(20,0){\small $\alpha$}
\put(8,47){\small $\eta_0$}
\put(8,30){\small $H_1$}
\put(27,51){\small $H_i$}
\put(53,51){\small $H_{i+1}$}
\put(40,52){\small $\eta_i$}
\put(25,30){\small $\lambda_i$}
\put(35,30){\small $\nu_i$}
\put(45,30){\small $\mu_i$}
\put(31,0){\small $g_i$}
\put(36,0){\small $g_ix_i$}
\put(66,0){\small $\alpha(r)$}
\put(91,0){\small $\mu_k(2r)$}
\put(99,8){\small $\eta_k$}
\put(82,0){\small $\mu_k$}
\put(56,0){\small $g_kx_k$}
\end{overpic}
\caption{Construction of the avoidant path $\eta$.  The geodesic rays $\mu_i$ and $\mu_k$ are 
shown in bold.}
\label{fig:poly-upper}
\end{figure}
\end{proof}

\begin{remark}\label{rmk:thick}  This upper bound could also be obtained by arguments in \cite{behrstock-drutu}, as the group $W_d$ is strongly algebraically thick of order at most $d-1$.  To see this, for each $n \geq d \geq 1$ define a right-angled Coxeter group $W_{n,d}$ to be the special subgroup of $W_n$ generated by the set $\{a_0,a_1,\ldots,a_n, b_0,b_1,\ldots,b_d\}$.  Note that $W_{d,d} = W_d$.  Now $W_{n,2}$ is strongly algebraically thick of order at most $1$ since its defining graph is \cfs\ (see Remark~\ref{rmk:thick-cfs} above).  By induction on $d$, the group $W_{n,d}$ is strongly algebraically thick of order at most $d-1$ with respect to $\mathcal{H} = \{ W_{n,d-1}, \, b_d W_{n,d-1}b_d \}$.  Hence in particular, $W_d$ is strongly algebraically thick of order at most $d-1$.

\end{remark}

\begin{prop}\label{prop:poly-lower}
$\div{\G_d} (r) \succeq r^d$.
\end{prop}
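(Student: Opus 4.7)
The plan is to proceed by induction on $d$. The base cases $d = 1$ and $d = 2$ are supplied by Theorems~\ref{thm:linear-div} and~\ref{thm:cfs-div}, since $\G_1$ is a join and $\G_2$ is \cfs\ but not a join. For the inductive step $d \ge 3$, assume the result for smaller indices. Following Definition~\ref{def:gamma}, I would exhibit a bi-infinite geodesic $\gamma_d \subset \Cayley{{\G_d}}$, labelled by a bi-infinite power $\dots v_d v_d v_d \dots$ of a word $v_d$ whose first letter is $a_d$, whose support is the full vertex set of $\G_d$, and whose consecutive letters never commute (with the last letter not commuting with the first), so that every $v_d^n$ is reduced. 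Such a $v_d$ exists by the argument of Definition~\ref{def:gamma}, since $\G_d^c$ is connected.

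Given an arbitrary $r$-avoidant path $\eta$ from $\gamma_d(-nk)$ to $\gamma_d(nk)$, where $k = |v_d|$, I would adapt the strategy of Proposition~\ref{prop:non-cfs-cubic-lower}: decompose $\eta$ into sub-paths $\eta_0, \dots, \eta_{n-1}$ using the edges at which $\eta$ first crosses each wall $v_d^i H_{a_d}$ traversed by $\gamma_d$. The goal is the per-piece estimate $\ell(\eta_i) \succeq (n-i)^{d-1}$, which yields
\[
\ell(\eta) \; \ge \; \sum_{i=0}^{n-1} \ell(\eta_i) \; \succeq \; \sum_{i=0}^{n-1} (n-i)^{d-1} \; \succeq \; n^d,
\]
completing the inductive step.

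The heart of the argument, and the main obstacle, is establishing the per-piece bound. I would extract it from an inductive strengthening of Lemma~\ref{lem:vanKampen} adapted to $\G_d$: for $\alpha$ a geodesic ray from $e$ running along $H_{a_d}$ (hence supported on the link $\{a_0, b_0\}$ of $a_d$) and $\beta$ the ray consisting of the segment $v_d$ followed by a ray along $v_d H_{a_d}$, one should have $\div{\alpha, \beta}(r) \succeq r^{d-1}$. My approach would imitate the van Kampen diagram machinery of Lemma~\ref{lem:vanKampen}: form the diagram $D$ filling $\alpha \cup \eta \cup \beta^{-1}$, build its walls, dual cell structure, and strips $S_i$, and count skew cells. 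The cubic-case bound $\ell(S_i) \succeq i^2$ must be promoted to $\ell(S_i) \succeq i^{d-1}$, and this is precisely where the inductive hypothesis should enter: because $a_d$ is attached to $\G_{d-1}$ only through the pair $\{a_0, b_0\}$, any strip configuration that avoids enough skew cells would reconstruct a subgraph realising a lower divergence exponent inside $\G_{d-1}$, contradicting the inductive estimate $\div{{\G_{d-1}}}(r) \succeq r^{d-1}$. Iterating the skew-cell accumulation argument of Claim~\ref{clm:ui} one step deeper per induction should promote the linear count $u_i \ge i$ to a polynomial count of order $d-2$, and hence the strip-length bound to $i^{d-1}$, as needed. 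In effect, the van Kampen analysis of Proposition~\ref{prop:non-cfs-cubic-lower} becomes nested $d-1$ levels deep, with the cubic case being the $d=3$ instance of this nested scheme.
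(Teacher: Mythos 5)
Your overall architecture (induction on $d$, decompose an avoidant path into pieces at wall crossings, and bound each piece by a per-piece divergence estimate) is the right general shape, but it diverges from the paper's route and has a genuine gap at its heart. The paper does not use the van Kampen machinery of Lemma~\ref{lem:vanKampen} here at all. Instead it carries out a direct wall-counting argument, working in the larger graph $\G_{d+2}$, and inducts on a carefully tailored family of statements of the form $\avoid{\alpha_n(r)}{\beta_n(r)} \ge (1/2^{k(k+1)})r^k$ for geodesics $\alpha_n$ running along $H_{b_{n+1}}$ and $\beta_n$ running along one of $H_{a_n}, H_{b_n}, H_{b_{n+2}}$. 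The point is that the chain of links of the vertices $b_i$ (each equal to $\{a_{i-1},b_{i-1},b_{i+1}\}$) lets you step from level $n$ to level $n\pm1$: the geodesic segments $\mu_i$, $\nu_i$ built along the walls $H_i$, $H_i'$ crossed by the $n$th pair are again of the form $\alpha_{n\pm1}$, $\beta_{n\pm1}$, and the induction closes. This family-indexed statement is exactly strong enough to be applied to the specific geodesics arising in the piece decomposition.

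Your proposal, by contrast, tries to close the induction with the coarser statement $\div{\G_{d-1}}(r)\succeq r^{d-1}$. That is a worst-case statement --- it says \emph{some} pair of geodesics in $\Cayley{\G_{d-1}}$ has that divergence, not that the particular geodesics (all running along parallel walls of type $a_d$, supported on the single pair $\{a_0,b_0\}$) that arise from your piece decomposition do. Indeed, plenty of geodesics in $\Cayley{\G_{d-1}}$ have only linear divergence (those lying in a flat), so the group-level lower bound cannot be invoked per piece. The second gap is the promotion of the strip-length estimate $\ell(S_i)\succeq i^2$ to $\ell(S_i)\succeq i^{d-1}$: Claims~\ref{clm:large} and~\ref{clm:ui} in the paper establish one large cell per strip and hence a \emph{linear} growth of skew cells, giving a quadratic strip length; this is a finitary graph-theoretic argument tied to the non-\cfs\ property, not a divergence argument, and there is no evident mechanism to make it yield higher powers. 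Your sketch ``iterating the skew-cell accumulation one step deeper per induction'' is an aspiration rather than an argument, and ``any strip configuration that avoids enough skew cells would reconstruct a subgraph realising a lower divergence exponent'' conflates a combinatorial reconstruction of a subgraph of $\G_d$ (which is what Claim~\ref{clm:large} actually does) with a metric statement about divergence. As it stands, the heart of your inductive step --- the claimed Lemma~\ref{lem:vanKampen}-type bound $\div{\alpha,\beta}(r)\succeq r^{d-1}$ --- is unproved, and the ideas offered do not suffice to prove it.
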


\begin{proof}
We prove the lower bound  by producing a pair of geodesic rays in $\Cayley{{\G_d}}$ whose divergence is bounded below by a constant multiple of $r^{d}$.  This will follow from a more general statement about the divergence of certain pairs of geodesics in  $\Cayley{{\G_{d+2}}}$.

 For $1 \le n \le d$, let $\alpha_n$ and $\beta_n$ 
  be any geodesic rays in $\Cayley{{\G_{d+2}}}$ satisfying the following conditions:
 \begin{enumerate}
 \item $\alpha_n$ emanates from $e$ and  travels along $H_{b_{n+1}}$; and  
 \item $\beta_n$ emanates from $e$ and  travels along one of $H_{ a_{n}}, H_{b_{n}}$, or $H_{b_{n+2}}$. (Note that $\{a_n, b_n, b_{n+2}\}$ is exactly the set of types of walls which can intersect $H_{b_{n+1}}$.)
\end{enumerate}
Then 
we show below that  
\begin{equation}\label{eq:alphanbetan}
\avoid{\alpha_n(r)}{\beta_n(r)}  \ge\frac 1 {2^{n(n+1)}} r^{n}.
\end{equation}

When $n=d$, one can take $\alpha_d$ to be the geodesic ray based at $e$ with label 
$b_da_db_da_d \dots$, as this travels along $H_{{b_{d+1}}}$, and $\beta_d$ to be the geodesic 
ray based at $e$ with label $b_{d-1}a_{d-1}b_{d-1}a_{d-1}\dots$, as this travels along $H_{{b_d}}
$.  Observe that these geodesics are actually in the copy of $\Cayley{{\G_d}}$ based at $e$.  Any 
avoidant path between $\alpha_d(r)$ and $\beta_d(r)$
in $\Cayley{{\G_d}}$ remains avoidant under the isometric inclusion $\Cayley{{\G_d}} \hookrightarrow \Cayley{{\G_{d+2}}}$, and therefore 
has length bounded below by 
$(1/2^{d(d+1)})r^d$, 
by (\ref{eq:alphanbetan}). This completes the proof of the proposition.

We establish (\ref{eq:alphanbetan})
by proving the following equivalent statement by induction on $k$:
 for all $1 \le k \le d$ and all $k \le n \le d$,  if $\alpha_n$ and $\beta_n$ satisfy the conditions
(1) and (2) respectively, then 
$\avoid{\alpha_n(r)}{\beta_n(r)} \ge (1/2^{k(k+1)})r^{k}$.  

Observe that for any $n$, if $\alpha_n$ and $\beta_n$ are chosen as above, then 
$\alpha_n$ 
concatenated with $\beta_n$ at $e$ is a bi-infinite geodesic,  since $\alpha_n
$ and $\beta_n$ have disjoint supports, regardless of the type of wall along which $\beta_n$ 
travels. Thus any avoidant path between $\alpha_n(r)$ and $\beta_n(r)$ must cross the $2r$ walls crossed 
by this bi-infinite geodesic.  
This proves the case $k=1$, as $\avoid{\alpha_n(r)}{\beta_n(r)} \ge 2r > (1/4)r$ for all $n \ge 1$.

Now suppose $n \ge k+1$, and let $\eta$ be an avoidant path connecting $\beta_n(r)$ to $\alpha_n(r)$.  
 Focus on the $r/2$ walls that $\alpha_n$ crosses between $\alpha_n(0)$ and $\alpha_n(r/2)$.  
 Each of these is of type $ a_{n}, b_{n}$, or $b_{n+2}$.  
Since two consecutive walls cannot be of the same type, at most half of these walls are of type 
$a_{n}$.  Thus, in this range, $\alpha_n$ (and hence $\eta$) crosses at least $r/4$ walls of type $b_{\ast}$, where the 
subscript is either ${n}$ or ${n+2}$.  Call them $H_1, \dots, H_l$, where $l \ge r/4$.  
Let $(g_i, g_ib_\ast)$ be the edge where $\alpha_n$ crosses $H_i$ and let $(h_i, h_ib_\ast)$ 
be the first edge where $\eta$ crosses $H_i$, going from $\beta_n(r)$ to $\alpha_n(r)$.
Let $\mu_i$ denote the unique geodesic connecting $g_i$ to $h_i$ that travels along $H_i$.   
(See Figure~\ref{fig:poly-lower}.) 
Define $\mu_0=\beta_n$ and $h_0=\beta_n(r)$.

\begin{figure}
\begin{overpic}[scale=1,unit=1mm]%
{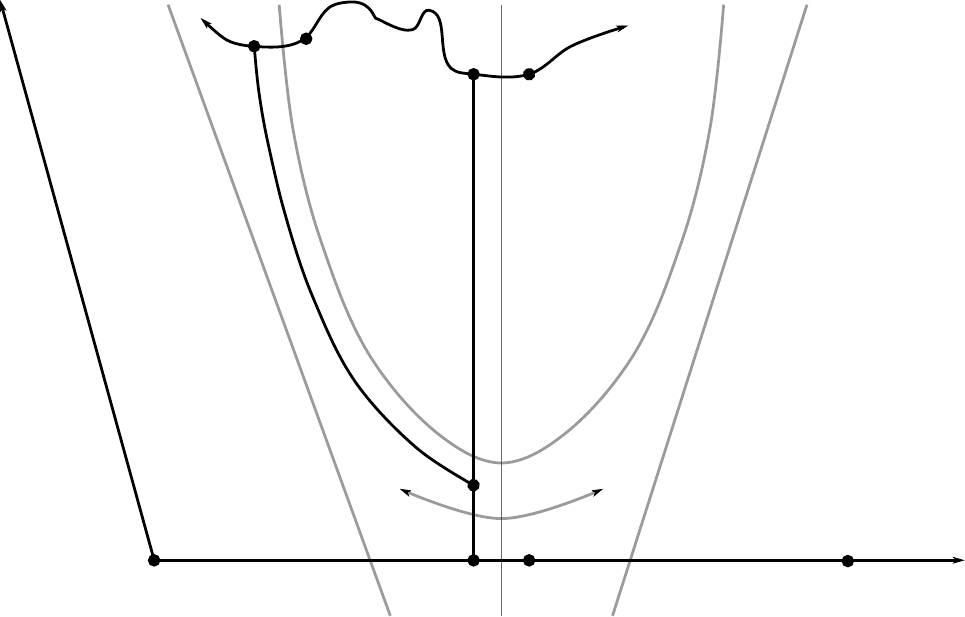}
\put(53,27){\small $H_i$}
\put(67,45){\small $H_i'$}
\put(22,27){\small $H_{i-1}$}
\put(73,27){\small$H_{i+1}$}
\put(34,22){\small $\nu_i$}
\put(12,2){\small $e$}
\put(45,40){\small $\mu_i$}
\put(4, 32){\small $\beta_n$}
\put(22,3){\small $\alpha_n$}
\put(86,2){\footnotesize $\alpha_n(\tiny r/2)$}
\put(47,2){\small $g_i$}
\put(45,53){\small $h_i$}
\put(23,57){\small $p_i$}
\put(55,2){\small $g_ib_\ast$}
\put(56,53){\small $ h_ib_\ast$}
\put(32,57){\small $p_ix_i$}
\put(40,63){\small $\eta_i$}
\end{overpic}
\caption{Construction of $\mu_i$ and $\nu_i$.  Here $\mu_i(0)=g_i$ and $\nu_i(0)=\mu_i(1)$. }
\label{fig:poly-lower}
\end{figure}

For $1\le i \le l$, let $H_i'$ denote the second wall crossed by $\mu_i$ starting at $g_i$.  
Note that $H_i'$ intersects $H_i$, and therefore cannot intersect $\alpha_n$, since no two walls crossed by $\alpha_n$ 
intersect.  
We claim that $H_i'$ also does not intersect $\mu_{i-1}$.  
The support of $\mu_i$ is contained in either $\{a_{n-1}, b_{n-1}, 
b_{n+1}\}$ 
or $\{a_{n+1}, b_{n+1}, b_{n+3}\}$ depending on the type of $H_i$.  
If the first wall crossed by $\mu_i$ doesn't intersect $\mu_{i-1}$, then $H_i'$ can't either, as 
$H_i'$ is separated from $\mu_{i-1}$ by the first wall.  Otherwise, the first wall crossed by $\mu_i$ has to be of type $b_{n+1}$, which 
means that $H_i'$ is not of type $b_{n+1}$.  If the types of $H_i$ and $H_{i- 1}$ are different, 
then the type of $H_i'$ is not in the support of $\mu_{i - 1}$, so $H_i'$ cannot intersect $\mu_{i - 1}$.  Finally, if the 
types of $H_i$ and $H_{i- 1}$ are the same, then they must be separated by a wall of type 
$a_n$, since $\alpha_n$ can't cross two consecutive walls of the same type.  Now $H_i'$ can't intersect 
this wall, since it is not of type $a_0, b_0$ or $b_{n+1}$.  So $H_i'$ can't intersect $\mu_{i - 1}$ 
either.

It follows that for $1 \le i \le l$, the wall $H_i'$ separates the points $h_{i-1}$ and $h_i$, since 
the path formed by concatenating $\mu_{i-1}$,
the part of $\alpha_n$ between $g_{i-1}$ and $g_i$, and $\mu_{i}$ crosses $H_i'$ exactly once.  Now 
$\eta$ contains a sub-path
connecting $h_{i-1}$ and $h_i$, so $\eta$ must cross $H_i'$. Let $(p_i, p_ix_i)$ be the first edge along which it crosses $H_i'$,
where $x_i$ is the type of $H_i'$.  Let
 $\eta_i$ denote the part of $\eta$ between $p_i$ and $h_i$, and 
let $\nu_i$ denote the unique geodesic 
connecting $\mu_i(1)$ to $p_i$, which travels along $H_i'$.

Observe that $\mu_i$ is a geodesic that travels along a wall of type $b_n$ or $b_{n+2}$, and $
\nu_i$ is a 
geodesic that travels along a wall that intersects it.  This means that the pair $\mu_i$ and $\nu_i$ 
is either of the form $\alpha_{n-1}$ and $\beta_{n-1}$ or 
$\alpha_{n+1}$ and $\beta_{n+1}$ (if we allow the geodesics to emanate from $\mu(1)$ instead of 
$e$).  
 Since $n-1 \ge k$, 
the inductive hypothesis applies, 
and we have that $d^{\mathrm{av}}_{\mu(1)}(\mu_i(s+1), \nu_i(s)) \ge 
(1/2^{k(k+1)})s^k$ for all $s$.
Since we restricted to the walls $H_1, \dots, H_l$ crossed by $\alpha_n$ between $e$ and $
\alpha_n(r/2)$, we know that $|g_i| \le r/2$.  On the other hand, since $h_i$ and $p_i$ are 
$r$-avoidant, the lengths of   
$\mu_i$ and  $\nu_i$ are at least $r/4$.  
By the observation and the end of Section~\ref{sec:divergence},  
$\ell(\eta_i) \ge (1/2^{k(k+1)}) (r/4)^k$ for all $i$.
  So, since $l \ge 4$,  we have 
$$
\ell(\eta) \ge \sum_{i=1}^{l} \ell(\eta_i) \ge 
l \left(\frac 1 {2^{k(k+1)}}\right) \left(\frac r4\right)^k \ge 
\left(\frac r4\right) \left(\frac {r^k}{2^{k(k+1)+k}}\right) =
 \frac 1 {2^{(k+1)(k+2)}} r^{k+1}
$$
as required.
\end{proof}

\appendix

\section{Relationship with examples of Macura}\label{app:macura}

In this appendix we discuss the relationship between our constructions of $\CAT(0)$ groups with divergence polynomial of any degree, and those of Macura~\cite{macura}.   

For $d \geq 2$, we denote by $G_d$ the group constructed in~\cite{macura} with presentation
\[ G_d = \langle a_0, a_1, \ldots, a_d \mid a_0a_1 = a_1a_0 \mbox{ and } a_i^{-1}a_0 a_{i} = a_{i-1}  \mbox{ for $2 \leq i \leq d$}\rangle. \] 
Let $X_d$ be the presentation $2$-complex for this presentation of $G_d$.  Then $X_d$ has a single vertex $v$, $d+1$ oriented edges labeled by $a_0,a_1,\ldots,a_d$, and $d$ squares with boundary labels $a_0a_1a_0^{-1}a_1^{-1}$ and $a_i^{-1}a_0 a_{i} a_{i-1}^{-1} $ for $2 \leq i \leq d$. Equip $X_d$ with the metric such that each square is a unit Euclidean square.  Then the universal cover $\widetilde{X_d}$ is a $\CAT(0)$ square complex, in which the link of every vertex is the graph $\G_d$  from Figure~\ref{fig:poly-graph} above.  
The link of any vertex in the Davis complex for $W_d= W_{\G_d}$ with the cellulation by big squares is also 
$\G_d$.
This observation is why we consider the relationship between $G_d$ and $W_d$.  To avoid confusion with Macura's notation, in this section we  relabel the vertices of $\G_d$ by $s_{i^+} = a_i$ and $s_{i^-} = b_i$ for $0 \leq i \leq d$.  

We would like to use covering theory to investigate common finite index subgroups of $G_d$ and $W_d$.  Any finite index subgroup of $G_d$ is the fundamental group of a finite square complex $Q$ such that there is a combinatorial covering map $\Psi:Q \to X_d$.  However since the group $W_d$  has torsion a more sophisticated covering theory is needed; as we explain below, its finite index subgroups correspond to finite-sheeted covers of complexes of groups.  We first recall some background on complexes of groups in Section~\ref{sec:complexes of groups}.  We then use this theory to show in Section~\ref{sec:commens} that $W_2$ and $G_2$ are commensurable, and to explain in Section~\ref{sec:d>2} why for $d > 2$ the covering-theoretic arguments used to find a common finite index subgroup in the case $d = 2$ cannot be applied.  

\subsection{Complexes of groups}\label{sec:complexes of groups}

We adapt the theory of complexes of groups and their coverings to our situation.  The general theory and details can be found in~\cite[Chapter III.$\mathcal{C}$]{bridson-haefliger}.  Throughout this section, $W = W_\G$ is a right-angled Coxeter group with $\G$ satisfying the hypotheses of Theorem~\ref{thm:linear and quadratic}, and $\Sigma$ is the associated Davis complex.

Let $Y$ be a square complex.  Assume that the edges of $Y$ may be oriented so that: 
\begin{itemize} \item[$(*)$] for each square of $Y$, if the positively oriented edge labels of this square are $a$, $b$, $a'$ and $b'$, then $b'a'a^{-1}b^{-1}$ is the boundary label. \end{itemize} For an oriented edge $e$ of $Y$, we denote by $i(e)$ its initial vertex and by $t(e)$ its terminal vertex.

\begin{examples}\label{ex:orient}  Two important examples of square complexes with edge orientations satisfying $(*)$ are the following.
\begin{enumerate}
\item Let $Y$ be the chamber $K$ with the cellulation by small squares.  For all pairs of spherical subsets $T' \subsetneq T$, we orient the edge of $Y$ connecting the vertices $\s_{T'}$ and $\s_T$ so that this edge has initial vertex $\s_{T'}$ and terminal vertex $\s_{T}$.  Note that every edge incident to $\s_\emptyset$ has initial vertex $\s_\emptyset$.
\item Similarly, if $Y = \Sigma$ with the cellulation by small squares, then the edges of $\Sigma$ may be oriented by inclusion of type.  
\end{enumerate}
\end{examples}

\noindent Now suppose that $Y$ and $Z$ are square complexes with edge orientations satisfying $(*)$.

\begin{definition}\label{d:morphism_scwols}  A \emph{nondegenerate morphism} $f:Y \to Z$ is a map taking vertices to vertices and edges to edges, such that:
\begin{enumerate}
\item for each square of $Y$, the restriction of $f$ to this square is a bijection onto a square of $Z$; and
\item for each vertex $\s$ of $Y$, the restriction of $f$ to the set of edges with initial vertex $\s$ is a bijection onto the set of edges of $Z$ with initial vertex $f(\s)$.
\end{enumerate}
\end{definition}

\noindent For example, if $Y = \Sigma$ and $Z = K$ with the orientations specified in Examples~\ref{ex:orient} above, then the quotient map $f:Y \to Z$ induced by the action of $W$ on $\Sigma$ is a nondegenerate morphism.   

\begin{definition}\label{d:cx of gps}  Let $Y$ be a square complex with edge orientations satisfying $(*)$.  
A \emph{complex of groups} $\mathcal{G}(Y)=(G_\sigma, \psi_e)$ over $Y$ consists of: \begin{enumerate} \item a group $G_\sigma$ for each
vertex $\sigma$ of $Y$, called the \emph{local group} at $\sigma$; and
\item a monomorphism $\psi_e: G_{i(e)}\rightarrow G_{t(e)}$ along each edge $e$ of $Y$.
\end{enumerate}
\end{definition}

\noindent A complex of groups is \emph{trivial} if each local group is trivial.  

\begin{example}\label{ex:WK}  We construct a canonical complex of groups $\mathcal{W}(K)$ over $K$ as follows.  For each spherical subset $T \in \cS$, the local group at the vertex $\sigma_T$ is the special subgroup $W_T$.  All monomorphisms along edges are inclusions. \end{example} 

The complex of groups $\mathcal{W}(K)$ in Example~\ref{ex:WK} is canonically induced by the action of $W$ on $\Sigma$.  More generally, if $G$ is a subgroup of $W$ then the action of $G$ on $\Sigma$ induces a complex of groups $\mathcal{G}(Y)$ over $Y = G \backslash \Sigma$, such that for each vertex $\s$ of $Y$, the $G$-stabiliser of each lift $\overline\s$ of $\s$ in $\Sigma$ is a conjugate of the local group $G_\s$ of $\mathcal{G}(Y)$.  A complex of groups is \emph{developable} if it is isomorphic to a complex of groups induced by a group action.  Complexes of groups, unlike graphs of groups, are not in general developable.  

See~\cite{bridson-haefliger} for the definition of the \emph{fundamental group} $\pi_1(\mathcal{G}(Y))$ and \emph{universal cover} of a (developable) complex of groups $\mathcal{G}(Y)$.  The universal cover of $\mathcal{G}(Y)$ is a connected, simply-connected square complex $X$, equipped with an action of $G = \pi_1(\mathcal{G}(Y))$ so that $Y = G \backslash X$.    

\begin{examples}
\hfill
\begin{enumerate}
\item The complex of groups $\mathcal{W}(K)$ has fundamental group $W$ and universal cover $\Sigma$.   
 \item Let $\mathcal{G}(Y)$ be the trivial complex of groups over a square complex $Y$.  Then $\pi_1(\mathcal{G}(Y))$ is the (topological) fundamental group of $Y$, and $\pi_1(\mathcal{G}(Y))$ acts freely on the universal cover of $\mathcal{G}(Y)$.
\end{enumerate}
\end{examples}

\noindent If a complex of groups $\mathcal{G}(Y)$ is developable, then each local group $G_\s$ naturally embeds in the fundamental group $\pi_1(\mathcal{G}(Y))$.

We now discuss coverings of complexes of groups.  We will only need to construct coverings $\mathcal{G}(Y) \to \mathcal{W}(K)$ where $\mathcal{G}(Y)$ is a trivial complex of groups, and so do not give the general definition, which is considerably more complicated.

\begin{definition}\label{d:covering} Let $Y$ be a square complex with edge orientations satisfying $(*)$.  Let $\mathcal{G}(Y)$ be the trivial complex of groups over $Y$.  A \emph{covering} of complexes of groups $\Phi: \mathcal{G}(Y) \to \mathcal{W}(K)$ consists of:
\begin{enumerate}
\item a nondegenerate morphism $f:Y \to K$; and 
\item \label{i:bijection} for each edge $e$ of $Y$, with $f(t(e)) = \s_T$, an element $\phi(e) \in W_{T}$; \end{enumerate} such that for each vertex $\sigma$ of $Y$ and each edge $e'$ of $K$, with $t(e') = f(\sigma) = \s_T$ and $i(e') = \s_{T'}$, the map 
\[  \Phi_{\s/e'}: \{ e \in f^{-1}(e') \mid t(e)=\sigma\}  \to W_T/W_{T'}\] induced by $e \mapsto
\phi(e)$ is a bijection.
 \end{definition}

\noindent Observe that if $e'$ is an edge of $K$ with $t(e') = \s_T$ and $i(e') = \s_{T'}$, then $|T| = |T'| + 1$, hence if $T = T' \cup \{t\}$ we have $W_T/W_{T'} \cong \la t \ra \cong C_2$.  So the condition in Definition~\ref{d:covering} that $\Phi_{\s/e'}$ is a bijection is equivalent to the condition that the set $\{ e \in f^{-1}(e') \mid t(e)=\sigma\}$ has two elements say $e_1$ and $e_2$, such that without loss of generality $\phi(e_1) \in W_{T'}$ and $\phi(e_2) \in tW_{T'}$.  In particular, it suffices to put $\phi(e_1) = 1$ and $\phi(e_2) = t$.  A covering $\Phi: \mathcal{G}(Y) \to \mathcal{W}(K)$ as in Definition~\ref{d:covering} is \emph{finite-sheeted} if $Y$ is finite.

The following result is a special case of a general theorem on functoriality of coverings of complexes of groups.  The general result is implicit in~\cite{bridson-haefliger}, and stated and proved explicitly in~\cite{lim-thomas}.

\begin{thm}\label{cor:W_d cover}  Let $K_d$ be the chamber for $W_d$, cellulated by small squares.   Let $\mathcal{W}(K_d)$ be the complex of groups over $K_d$ described in Example~\ref{ex:WK} above, with fundamental group $W_d$.    Then any subgroup of $W_d$ is the fundamental group of a complex of groups $\mathcal{G}(Y')$ (not necessarily trivial) over a square complex $Y'$, such that there is a covering of complexes of groups $\Phi:\mathcal{G}(Y') \to \mathcal{W}(K_d)$.  Moreover, a subgroup of $W_d$ has finite index if and only if it is the fundamental group of $\mathcal{G}(Y')$ such that there is a finite-sheeted covering  $\Phi:\mathcal{G}(Y') \to \mathcal{W}(K_d)$.
\end{thm}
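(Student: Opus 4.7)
The plan is to deduce this theorem from the general correspondence between subgroups and coverings of complexes of groups developed in Bridson--Haefliger~\cite[Chapter III.$\mathcal{C}$]{bridson-haefliger} and recorded in the explicit form we need by Lim--Thomas. The approach splits into three stages: construct the complex of groups $\mathcal{G}(Y')$ from a subgroup $H \le W_d$; promote the two quotient maps from $\Sigma_d$ to a covering $\Phi$; and then translate the finite-index condition into the finiteness of $Y'$.

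First I would recall that $\mathcal{W}(K_d)$ is developable with universal cover $\Sigma_d$ and fundamental group $W_d$. Given a subgroup $H \le W_d$, it acts cellularly and type-preservingly on $\Sigma_d$. Let $Y' := H \backslash \Sigma_d$, cellulated by small squares and with edge orientations inherited by inclusion of type as in Examples~\ref{ex:orient}(2). For each vertex $\s$ of $Y'$ pick a lift $\widetilde{\s} \in \Sigma_d$ and let the local group $G_\s$ be the $H$-stabiliser of $\widetilde{\s}$; for each edge $e$ of $Y'$ choose lifts compatibly with a selected coset representative and let $\psi_e$ be the induced inclusion of stabilisers (post-composed with conjugation by the chosen representative). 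The developability of $\mathcal{G}(Y') = (G_\s,\psi_e)$, the identification $\pi_1(\mathcal{G}(Y')) \cong H$, and the fact that the universal cover of $\mathcal{G}(Y')$ is again $\Sigma_d$ are then standard outputs of the Bridson--Haefliger machinery.

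Second I would build the covering $\Phi: \mathcal{G}(Y') \to \mathcal{W}(K_d)$. The nondegenerate morphism $f: Y' \to K_d$ is the map induced by factoring the quotient $\Sigma_d \to K_d$ through $Y'$; nondegeneracy is immediate because both $\Sigma_d \to Y'$ and $\Sigma_d \to K_d$ are local isomorphisms on stars of vertices. For each edge $e$ of $Y'$ with $f(t(e)) = \s_T$, define $\phi(e) \in W_T$ as the element of $W_T$ that carries the chosen lift of $t(e)$ to the $W_d$-translate of the chosen lift of $\s_T$ determined by the relevant coset representative. The bijectivity of $\Phi_{\s/e'}$ in Definition~\ref{d:covering} then reduces to the transitivity of the $W_d$-action on typed stars of vertices in $\Sigma_d$: the preimages under $f$ of an edge $e'$ of $K_d$ terminating at a given vertex of $Y'$ are in natural bijection with the coset space $W_T/W_{T'}$, exactly as required.

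Finally, the finite-index equivalence is a counting argument. Since $K_d$ has finitely many cells and the maps $\Sigma_d \to Y' \to K_d$ are local isomorphisms preserving the cellulation, $Y'$ is a finite complex if and only if $H$ has finitely many orbits on the chambers of $\Sigma_d$, which holds if and only if $[W_d : H] < \infty$. The main obstacle is purely bookkeeping: one must check that the elements $\phi(e) \in W_T$ chosen above are compatible across faces of squares so that $\Phi$ really is a morphism of complexes of groups rather than just a family of set maps. This verification is precisely the content of the functoriality result of Lim--Thomas, so I would quote their theorem rather than redo the combinatorial check by hand.
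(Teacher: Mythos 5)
Your proposal follows essentially the same route as the paper: both defer the actual verification to the general functoriality theorem for coverings of complexes of groups, citing Bridson--Haefliger and Lim--Thomas. The paper's proof is literally just that citation; your sketch usefully unpacks the standard developability machinery (forming $Y' = H\backslash\Sigma_d$, local groups as $H$-stabilisers, $f:Y'\to K_d$ by factoring the quotient, and finite index $\Leftrightarrow$ finite $Y'$) that underlies it. One small imprecision: you invoke the paper's Definition~\ref{d:covering}, but that definition is stated only for \emph{trivial} complexes of groups $\mathcal{G}(Y)$, whereas the $\mathcal{G}(Y')$ in the theorem need not be trivial, so strictly one must appeal to the more general notion of covering (which the paper deliberately omits); since you ultimately quote Lim--Thomas for the full check, this is cosmetic rather than a gap.
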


\subsection{Commensurability in the case $d = 2$}\label{sec:commens}

We now use covering theory to prove the following. 

\begin{prop}\label{prop:commens}  The groups $G_2$ and $W_2$ are commensurable.
\end{prop}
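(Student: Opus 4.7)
The plan is to find a torsion-free subgroup $H$ of finite index in $W_2$ that is simultaneously isomorphic to a finite-index subgroup of $G_2$, both realised as the fundamental group of the same finite square complex. The starting observation is that both $\widetilde{X_2}$ and $\Sigma_2$ are simply-connected $\CAT(0)$ square complexes whose vertex links are all isomorphic to $\G_2$: for $X_2$ this comes from computing the link of the unique vertex via the two defining squares with the relabelling $s_{i^+} = a_i$, $s_{i^-} = b_i$, while for $\Sigma_2$ it is intrinsic to the Davis construction. The first step is to argue that for this specific graph the $\CAT(0)$ square complex is determined by the vertex link, giving an isomorphism $\widetilde{X_2} \cong \Sigma_2$. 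This step is special to $d = 2$; Corollary~\ref{cor:not isometric} shows that for $d > 2$ the two complexes are not even isometric, which is precisely the obstruction to extending the argument.

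Next I would produce matching index-$4$ subgroups on either side. The graph $\G_2$ is bipartite, with a proper $2$-colouring separating the six vertices into $\{a_0, b_0, a_2\}$ and $\{a_1, b_1, b_2\}$. The associated surjection $W_2 \to C_2 \times C_2$ has an index-$4$ kernel $H$, which is torsion-free because the restriction to each spherical special subgroup $W_{\{s,t\}} \cong C_2 \times C_2$ is an isomorphism. By Theorem~\ref{cor:W_d cover}, $H$ is the fundamental group of a trivial complex of groups $\mathcal{G}(Y)$ over $Y = H \backslash \Sigma_2$. On the $G_2$ side, abelianising and reducing mod $2$ gives a surjection $G_2 \to C_2 \times C_2$ (well-defined because $G_2^{\mathrm{ab}} \cong \Z^2$, the relation $a_2^{-1} a_0 a_2 = a_1$ forcing $a_0 = a_1$ abelianly); its kernel $G_2'$ has index $4$ in $G_2$, and $Y' = G_2' \backslash \widetilde{X_2}$ is a $4$-fold cover of $X_2$.

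The final step is to verify $Y \cong Y'$ as square complexes, from which $H \cong \pi_1(Y) \cong \pi_1(Y') \cong G_2'$ and hence commensurability follow. Under the identification $\Sigma_2 \cong \widetilde{X_2}$, both $Y$ and $Y'$ are quotients of this common $\CAT(0)$ complex by index-$4$ subgroups, so it suffices to show that $H$ and $G_2'$ define the same orbit partition; the argument proceeds by comparing orbit counts of cells of each type and matching the eight edges of $\G_2$ (indexing the big-square types of $\Sigma_2$) with the eight corners of Macura's two squares (indexing the lifts of squares in $\widetilde{X_2}$) via the correspondence $s_{i^+} = a_i$, $s_{i^-} = b_i$. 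The main obstacle is the rigidity step establishing $\widetilde{X_2} \cong \Sigma_2$: having the same vertex link is not in general sufficient for two $\CAT(0)$ square complexes to be isomorphic, so one must exploit specific features of $\G_2$ (small size, bipartite structure, and the particular arrangement of $4$-cycles in $\G_2^4$) to upgrade the local match to a global one. Once this rigidity is in hand, the remaining identification of the two index-$4$ quotients is combinatorial bookkeeping.
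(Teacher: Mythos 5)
Your overall plan — find a finite square complex $Y$ that is simultaneously a cover of Macura's presentation complex $X_2$ and (in the sense of complexes of groups) of the Davis chamber $K_2$, so that $\pi_1(Y)$ sits with finite index in both $G_2$ and $W_2$ — is exactly the right shape, and it is the shape the paper uses. However, two of your key steps are gaps rather than proofs.

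The first is the claimed isomorphism $\widetilde{X_2} \cong \Sigma_2$. You justify this by saying that for $\G_2$ the $\CAT(0)$ square complex is "determined by the vertex link," but that principle is false in general, and this very paper provides a counterexample: Proposition~\ref{prop:no covering} shows that for $d > 2$ the complexes $\widetilde{X_d}$ and $\Sigma_d$ are \emph{not} isometric, despite both having all vertex links equal to $\G_d$. So the rigidity you want is a genuinely special feature of $d=2$ that must be established, not invoked; you acknowledge the issue but do not resolve it. The paper avoids this step entirely, because the direction of implication is reversed there: it first constructs an explicit 8-sheeted cover $Q \to X_2$ together with a covering of complexes of groups from the trivial complex over (the subdivision of) $Q$ to $\mathcal{W}(K_2)$, and only \emph{as a consequence} do the universal covers coincide.

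The second gap is the assertion that the two index-$4$ quotients $Y = H \backslash \Sigma_2$ and $Y' = G_2' \backslash \widetilde{X_2}$ are isomorphic square complexes. Matching orbit counts of cells (indeed $4$ vertices, $12$ edges, $8$ squares on both sides) is necessary but far from sufficient: an isomorphism $Y \cong Y'$ would force $H$ and $G_2'$ to be conjugate inside $\operatorname{Aut}(\Sigma_2)$, and this needs an actual argument. Two cocompact lattices on the same $\CAT(0)$ square complex need not be commensurable in $\operatorname{Aut}$, so this is not a formality. Your specific choice of index-$4$ subgroups (via the bipartition map $W_2 \to C_2 \times C_2$ and the mod-$2$ abelianisation $G_2 \to C_2 \times C_2$) is a natural guess and might well work, but it would have to be checked by exhibiting the isomorphism explicitly — which is essentially what the paper does, except it finds it more convenient to build an 8-fold cover $Q$ of $X_2$ by hand and verify that its subdivision admits the required covering of complexes of groups, rather than starting from the $W_2$ side. (Also, a small slip: the bipartition of $\G_2$ is $\{a_0, b_0, b_2\}$, $\{a_1, b_1, a_2\}$, not $\{a_0, b_0, a_2\}$, $\{a_1, b_1, b_2\}$; $a_2$ is adjacent to $a_0$ and $b_0$, so it must lie in the opposite part.)
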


\begin{proof}  Denote by $Z_2$ the first square subdivision of the presentation $2$-complex $X_2$.   We will construct a finite square complex $Y$ such that:
\begin{enumerate} \item there is a combinatorial covering map $\Psi:Y \to Z_2$; and \item there is a covering of complexes of groups $\Phi:\mathcal{G}(Y) \to \mathcal{W}(K_2)$, where $\mathcal{G}(Y)$ is the trivial complex of groups over $Y$.  \end{enumerate} Since $\mathcal{G}(Y)$ is the trivial complex of groups, the fundamental group of $\mathcal{G}(Y)$ is just the (topological) fundamental group of $Y$.  It follows that $G_2$ and $W_2$ are commensurable.

The square complex $Y$ will be the first square subdivision of the square complex $Q$ constructed below.  We will show that there is an $8$-sheeted combinatorial covering map from $Q$ to $X_2$, which implies~(1).  See Figure~\ref{fig:Q}; the complex $Q$ is obtained by carrying out some further edge identifications on this square complex.  

\begin{figure}[ht]
\begin{center}
\begin{overpic}[scale=0.5]{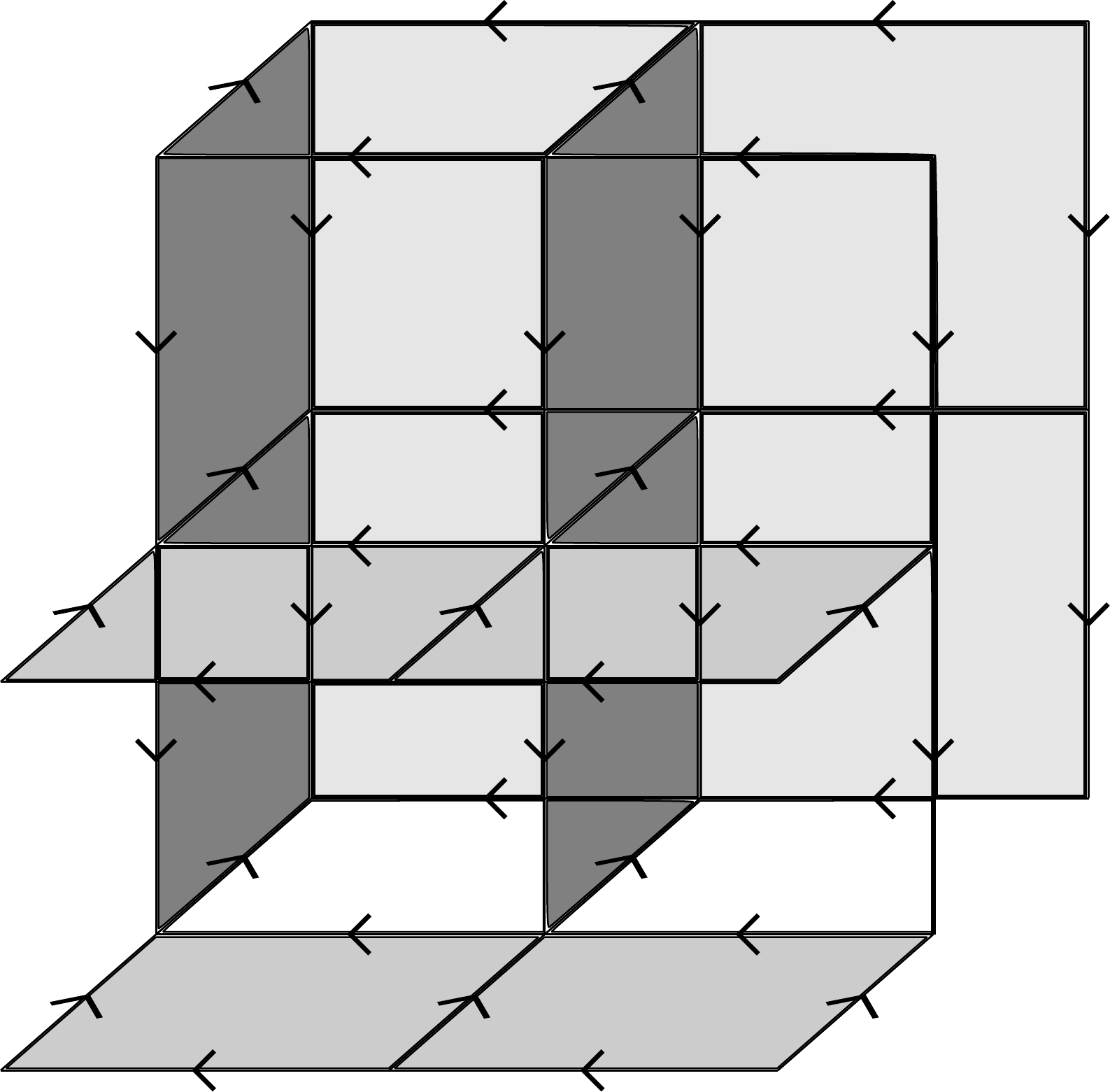}
\put(-4,4){$v_1$}
\put(32,4){$v_2$}
\put(67,4){$v_1$}
\put(-4,39){$v_4$}
\put(31,39){$v_3$}
\put(67,39){$v_4$}
\put(9,16){$v_5$}
\put(44,16){$v_6$}
\put(79,16){$v_5$}
\put(9,51){$v_8$}
\put(44,51){$v_7$}
\put(79,51){$v_8$}
\put(10,86){$v_5$}
\put(45,86){$v_6$}
\put(81,86){$v_5$}
\put(23,28){$v_1$}
\put(58,28){$v_2$}
\put(93,28){$v_1$}
\put(23,63){$v_4$}
\put(58,63){$v_3$}
\put(93,63){$v_4$}
\put(25,98){$v_1$}
\put(60,98){$v_2$}
\put(96,98){$v_1$}
\end{overpic}
\caption{The square complex $Q$, with vertices labelled and edges oriented, prior to some edge identifications.  All squares except for the four squares with vertex set $\{v_5,v_6,v_7,v_8\}$ are shaded.}\label{fig:Q}
\end{center}
\end{figure}

The complex $Q$ has $8$ vertices $v_1,\ldots,v_8$, which each get mapped to the vertex $v$ of $X_2$.  There are $24$ oriented edges of $Q$ which form three families as follows.  Here, $a_{i,j} = (v_k,v_l)$ means that the edge $a_{i,j}$ is the unique edge of $Q$ with initial vertex $v_k$ and terminal vertex $v_l$.
\begin{enumerate}
\item  The following $8$ edges get mapped to the edge $a_0$ of $X_2$:  $a_{0,1} = (v_1,v_2)$, $a_{0,2} = (v_2,v_1)$, $a_{0,3} = (v_4,v_3)$, $a_{0,4} = (v_3,v_4)$, $a_{0,5} = (v_6,v_7)$, $a_{0,6} = (v_7,v_6)$, $a_{0,7} = (v_5,v_8)$, $a_{0,8} = (v_8,v_5)$.  
\item  The following $8$ edges get mapped to the edge $a_1$ of $X_2$: $a_{1,1} = (v_1,v_4)$, $a_{1,2} = (v_4,v_1)$, $a_{1,3} = (v_2,v_3)$, $a_{1,4} = (v_3,v_2)$, $a_{1,5} = (v_5,v_6)$, $a_{1,6} = (v_6,v_5)$, $a_{1,7} = (v_8,v_7)$, $a_{1,8} = (v_7,v_8).$  
\item The following $8$ edges get mapped to the edge $a_2$ of $X_2$:  $a_{2,1} = (v_1,v_5)$, $a_{2,2} = (v_5,v_1)$, $a_{2,3} = (v_4,v_8)$, $a_{2,4} = (v_8,v_4)$, $a_{2,5} = (v_3,v_7)$, $a_{2,6} = (v_7,v_3)$, $a_{2,7} = (v_2,v_6)$, $a_{2,8} = (v_6,v_2)$.  
\end{enumerate}
We then attach $16$ squares along the following edge labels, forming two families as follows.  
\begin{enumerate}
\item The following $8$ squares get mapped to the square of $X_2$ attached along $a_0a_1a_0^{-1}a_1^{-1}$:  $$a_{0,1}a_{1,3}a_{0,3}^{-1}a_{1,1}^{-1}, \quad a_{0,2}a_{1,1}a_{0,4}^{-1}a_{1,3}^{-1},  \quad a_{0,3}a_{1,4}a_{0,1}^{-1}a_{1,2}^{-1},  \quad a_{0,4}a_{1,2}a_{0,2}^{-1}a_{1,4}^{-1}, $$
$$a_{0,7}a_{1,7}a_{0,5}^{-1}a_{1,5}^{-1}, \quad a_{0,8}a_{1,5}a_{0,6}^{-1}a_{1,7}^{-1},  \quad a_{0,5}a_{1,8}a_{0,7}^{-1}a_{1,6}^{-1},  \quad a_{0,6}a_{1,6}a_{0,8}^{-1}a_{1,8}^{-1}. $$
\item The following $8$ squares get mapped to the square of $X_2$ attached along $a_2^{-1}a_0a_2a_1^{-1}$: $$a_{2,1}^{-1}a_{0,1}a_{2,7}a_{1,5}^{-1}, \quad a_{2,7}^{-1}a_{0,2}a_{2,1}a_{1,6}^{-1}, \quad a_{2,3}^{-1}a_{0,3}a_{2,5}a_{1,7}^{-1}, \quad a_{2,5}^{-1}a_{0,4}a_{2,3}a_{1,8}^{-1},$$
$$a_{2,2}^{-1}a_{0,7}a_{2,4}a_{1,1}^{-1}, \quad a_{2,4}^{-1}a_{0,8}a_{2,2}a_{1,2}^{-1}, \quad a_{2,8}^{-1}a_{0,5}a_{2,6}a_{1,3}^{-1}, \quad a_{2,6}^{-1}a_{0,6}a_{2,8}a_{1,4}^{-1}.$$\end{enumerate}
This completes the construction of $Q$, together with a combinatorial covering $Q \to X_2$.

Now let $Y$ be the first square subdivision of $Q$ and let $\mathcal{G}(Y)$ be the trivial complex of groups over $Y$.  We assign types $T \in \cS$ to the vertices of $Y$, as follows.  If a vertex of $Y$ is one of the vertices of $Q$, it has type~$\emptyset$.  Next consider the vertices of $Y$ which are midpoints of edges of $Q$. 
Table~\ref{table:edge types} shows the assigned types of these vertices.  To simplify notation, we write $i^{\pm}$ for the type $\{ s_{i^{\pm}} \} \in \cS$, for $i = 0,1,2$.  
\begin{table}
\caption{Types of vertices in $Y$ which are midpoints of edges of $Q$} \label{table:edge types}
\begin{tabular}{| c | c|| c | c|| c| c|}\hline
Midpoint of edges & Type & Midpoint of edges & Type & Midpoint of edges & Type\\ \hline
$a_{1,1}, a_{1,3}, a_{0,5}, a_{0,7}$ & $0^+$ & $a_{0,1}, a_{0,3}, a_{1,5}, a_{1,7}$ & $1^+$ & 
$a_{2,2}, a_{2,4}, a_{2,6}, a_{2,8}$ 
& $2^+$ \\ \hline
$a_{1,2}, a_{1,4}, a_{0,6}, a_{0,8}$ & $0^-$ & $a_{0,2}, a_{0,4}, a_{1,6}, a_{1,8}$ & $1^-$  & 
$a_{2,1}, a_{2,3}, a_{2,5}, a_{2,7}$ 
& $2^-$  \\ \hline
\end{tabular}
\end{table}
Finally  consider the vertices of $Y$ which are at the centres of squares of $Q$.  Let $\s$ be such a vertex.  Then for some pair of types $i^{\varepsilon_i}$ and $j^{\varepsilon_j}$ with $i, j \in \{0,1,2\}$, $i \neq j$, and $\varepsilon_i, \varepsilon_j \in \{\pm\}$, two of the vertices of $Y$ which are adjacent to $\s$ are of type $i^{\varepsilon_i}$, and  two of the vertices of $Y$ which are adjacent to $\s$ are of type $j^{\varepsilon_j}$.  Moreover, $\{ i^{\varepsilon_i}, j^{\varepsilon_j} \} \in \cS$.  We then assign type $\{ i^{\varepsilon_i}, j^{\varepsilon_j} \}$ to the vertex $\s$. 

After assigning these types, it may be verified that $Y$ is obtained by taking $8$ copies of the chamber $K_2$ and gluing together certain pairs of mirrors of the same type.  We note also that the above assignment of types allows us to orient the edges of $Y$ in the same way as in $K_2$, that is, an edge $a$ has initial vertex of type $T'$ and terminal vertex of type $T$ if and only if $T' \subsetneq T$.

Next, define $f:Y \to K_2$ to be the only possible type-preserving morphism.  It may be checked that $f$ is a nondegenerate morphism.  We  construct a covering of complexes of groups $\Phi:\mathcal{G}(Y) \to \mathcal{W}(K_2)$ over $f$.  
In order to define the elements $\phi(a)$ for the edges $a$ of $Y$, we put an equivalence relation, parallelism, on 
the set of edges of $Y$, so that if $a$ and $b$ are parallel then we will have $\phi(a) = \phi(b)$.  The relation is 
generated by saying that two edges are \emph{parallel} if they are opposite edges of a (small) square of $Y$.   
The values of $\phi(a)$ for representatives $a$ of certain of the parallelism classes of edges in $Y$ are specified 
in Table~\ref{table:phi(a)}.  For all edges $a$ of $Y$ which are \emph{not} parallel to an edge appearing in Table~
\ref{table:phi(a)}, we put $\phi(a) = 1$.

\begin{table}
\caption{Nontrivial values of $\phi(a)$, for representatives $a$ of certain parallelism classes of edges} \label{table:phi(a)}
\begin{tabular}{| c | c| c|| c | c| c|}\hline
Vertex $i(a)$ & Type of $t(a)$ & $\phi(a)$ & Vertex $i(a)$ & Type of $t(a)$ & $\phi(a)$ \\ \hline
$v_1$ & $0^+$ & $s_{0^+}$ & $v_1$ & $2^-$ & $s_{2^-}$ \\ \hline
$v_4$ & $0^-$ & $s_{0^-}$ & $v_4$ & $2^-$ & $s_{2^-}$ \\ \hline
$v_1$ & $1^+$ & $s_{1^+}$ & $v_5$ & $2^+$ & $s_{2^+}$ \\ \hline
$v_2$ & $1^-$ & $s_{1^-}$ & $v_6$ & $2^+$ & $s_{2^+}$\\ \hline
\end{tabular}
\end{table}

To verify that $\Phi$ is a covering of complexes of groups, we simplify notation and write $s$ for the vertex $\s_{\{s\}}$ of the chamber $K_2$.  For each vertex $s_{i^\varepsilon}$ of $K_2$, where $i \in \{0,1,2\}$ and $\varepsilon \in \{\pm\}$, there is a unique edge $b$ of $K_2$ such that $s_{i^\varepsilon}$ is the terminal vertex of $b$.  Fix a vertex $\s \in f^{-1}(s_{i^\varepsilon})$.  Then there are two edges $a_1$ and $a_2$ of $Y$ with terminal vertex $\sigma$ such that $f(a_1) = f(a_2) = b$.  By construction, without loss of generality we have $\phi(a_1) = s_{i^\epsilon}$ and $\phi(a_2) = 1$.  Therefore $\Phi_{\s/b}$ is a bijection to $\la s_{i^\varepsilon} \ra \cong C_2$ as required.  Now consider a vertex $\s_T$ of $K_2$ where $T \in \cS$ with $|T| = 2$.  Write $T = \{i^{\varepsilon_i}, j^{\varepsilon_j} \}$.  If $b$ is an edge of $K_2$ with terminal vertex $\sigma_T$, then without loss of generality $b$ has initial vertex of type $i^{\varepsilon_i}$.  Fix a vertex $\sigma \in f^{-1}(\s_T)$.  Then there are two edges $a_1$ and $a_2$ of $Y$ with terminal vertex $\sigma$ such that $f(a_1) = f(a_2) = b$.  By construction, without loss of generality we have $\phi(a_1) = s_{j^{\epsilon_j}}$ and $\phi(a_2) = 1$.  Thus $\Phi_{\s/b}$ is a bijection to $W_T/\la s_{i^{\varepsilon_i}} \ra \cong \la s_{j^{\varepsilon_j}} \ra \cong C_2$ as required.  Therefore $\Phi$ is a covering of complexes of groups.  \end{proof}

\subsection{Discussion of case $d > 2$}\label{sec:d>2}

We conclude with a discussion of whether $G_d$ and $W_d$ are commensurable when $d > 2$.

The first result in this section says that in order to use covering-theoretic arguments to find a common finite index subgroup of $G_d$ and $W_d$, it suffices to consider coverings $\Phi:\mathcal{G}(Y') \to \mathcal{W}(K_d)$ where $\mathcal{G}(Y')$ is a \emph{trivial} complex of groups.  We denote by $Z_d$ the first square subdivision of $X_d$.

\begin{lemma}\label{lem:no torsion}  Let $d \geq 2$.  Suppose that $Y$ and $Y'$ are finite square complexes, such that for some complex of groups $\mathcal{G}(Y')$ over $Y'$, all of the following hold:
\begin{enumerate}
\item there is a combinatorial covering map $\Psi:Y \to Z_d$; 
\item there is a covering of complexes of groups $\Phi:\mathcal{G}(Y') \to \mathcal{W}(K_d)$; and
\item the fundamental group of $Y$ is isomorphic to the fundamental group of $\mathcal{G}(Y')$.
\end{enumerate} 
Then $\mathcal{G}(Y')$ is the trivial complex of groups.
\end{lemma}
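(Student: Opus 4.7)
The plan is to show that $\pi_1(\mathcal{G}(Y'))$ is torsion-free, then combine this with the facts that the local groups of $\mathcal{G}(Y')$ inject into those of $\mathcal{W}(K_d)$ (which are finite) and that developable complexes of groups have their local groups embedded in the fundamental group.

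First I would note that a combinatorial covering of square complexes is a topological covering map of the underlying CW complexes; hence the hypothesis (1) gives an embedding $\pi_1(Y) \hookrightarrow \pi_1(Z_d) = \pi_1(X_d) = G_d$. The group $G_d$ acts freely and cocompactly on the $\CAT(0)$ cube complex $\widetilde{X_d}$, and any finite-order isometry of a complete $\CAT(0)$ space has a fixed point (e.g.\ \cite{bridson-haefliger}, II.2.8), so $G_d$ is torsion-free. By hypothesis (3), $\pi_1(\mathcal{G}(Y'))\cong\pi_1(Y)$ is therefore torsion-free.

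Next I would argue that every local group of $\mathcal{G}(Y')$ is finite. The general definition of a covering of complexes of groups (see~\cite{bridson-haefliger}, III.$\mathcal{C}$.A) includes, for each vertex $\s$ of $Y'$, a monomorphism from the local group $G_\s$ into the local group of $\mathcal{W}(K_d)$ at $f(\s)$, where $f\colon Y'\to K_d$ is the underlying nondegenerate morphism.  The local groups of $\mathcal{W}(K_d)$ are the spherical special subgroups $W_T$ for $T\in\cS$, each of which is one of $1$, $C_2$, or $C_2\times C_2$; so every $G_\s$ is finite.

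Finally, because $\mathcal{W}(K_d)$ is developable and $\Phi$ is a covering of complexes of groups, $\mathcal{G}(Y')$ is itself developable and each local group $G_\s$ embeds in $\pi_1(\mathcal{G}(Y'))$.  Combining the previous two paragraphs, each $G_\s$ is a finite subgroup of a torsion-free group, hence trivial.  Therefore $\mathcal{G}(Y')$ is the trivial complex of groups.

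The main obstacle here is verifying that the general (unstated) definition of a covering of complexes of groups does induce the required local monomorphisms $G_\s \hookrightarrow W_{f(\s)}$ and does propagate developability from target to source; these are standard but somewhat technical facts from Bridson--Haefliger that must be invoked carefully, since the paper's own definition of covering (Definition~\ref{d:covering}) was restricted to the case where the source is trivial.
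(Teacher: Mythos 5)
Your argument is correct and follows essentially the same route as the paper's proof: establish that $\pi_1(\mathcal{G}(Y'))$ is torsion-free via the covering of $Z_d$ and the torsion-freeness of $G_d$, observe that each local group injects into a finite spherical special subgroup $W_T$, and then use developability to embed the local groups into the torsion-free fundamental group, forcing them to be trivial. The only difference is that you spell out why $G_d$ is torsion-free (free cocompact action on a $\CAT(0)$ cube complex), which the paper takes as given.
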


\begin{proof}  Since $G_d$ is torsion-free, these assumptions imply that the fundamental group of $\mathcal{G}(Y')$ is torsion-free.

Assume that $\mathcal{G}(Y')$ is not the trivial complex of groups.  Then there is a vertex $\s$ of $Y'$ such that the monomorphism $\phi_{\s}:G_\s \to W_T$ has nontrivial image, for some $T \in \cS$.  But $W_T$ is a finite group, and so $G_\s$ is finite.  Since $\mathcal{G}(Y')$ is developable, we thus have a nontrivial finite group $G_\s$ which embeds in the torsion-free group $\pi_1(\mathcal{G}(Y'))$.  This is a contradiction.
\end{proof}

We now show that the strategy used to prove that $G_2$ and $W_2$ are commensurable cannot be implemented for $d > 2$.  By Lemma~\ref{lem:no torsion}, we need only consider coverings $\Phi:\mathcal{G}(Y) \to \mathcal{W}(K_d)$ where $\mathcal{G}(Y)$ is the trivial complex of groups.  

\begin{prop}\label{prop:no covering}  If $d > 2$ there is no square complex $Y$ such that both of the following conditions hold:
\begin{enumerate}
\item there is a combinatorial covering map $\Psi:Y \to Z_d$; and
\item there is a covering of complexes of groups $\Phi:\mathcal{G}(Y) \to \mathcal{W}(K_d)$, where $\mathcal{G}(Y)$ is the trivial complex of groups over $Y$.
\end{enumerate}
\end{prop}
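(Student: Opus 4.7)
The strategy will be to show that the existence of such a $Y$ would force the Cayley $2$-complex $\widetilde{X_d}$ of $G_d$ and the Davis complex $\Sigma_d$ of $W_d$ to be isomorphic as square complexes, contradicting Corollary~\ref{cor:not isometric} for $d > 2$. To carry this out, I would let $\tilde{Y}$ denote the topological universal cover of $Y$, equipped with the square complex structure inherited from $Y$. Condition~(1) says that the combinatorial covering $\Psi: Y \to Z_d$ lifts to an isomorphism of square complexes $\tilde{Y} \cong \widetilde{Z_d}$, and by construction $\widetilde{Z_d}$ is the first square subdivision of $\widetilde{X_d}$.

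Condition~(2), together with the general theory of developable complexes of groups (Chapter~III.$\mathcal{C}$ of~\cite{bridson-haefliger}), provides an embedding $\pi_1(\mathcal{G}(Y)) \hookrightarrow \pi_1(\mathcal{W}(K_d)) = W_d$ and an isomorphism between the universal cover of $\mathcal{G}(Y)$ (in the complex-of-groups sense) and the universal cover of $\mathcal{W}(K_d)$, namely $\Sigma_d$ with its small-square cellulation. Since $\mathcal{G}(Y)$ is a trivial complex of groups, its complex-of-groups universal cover coincides with the topological universal cover of $Y$. Hence $\tilde{Y} \cong \Sigma_d$ as square complexes, and combining the two identifications of $\tilde{Y}$ yields an isomorphism of square complexes between $\widetilde{Z_d}$ and $\Sigma_d$ with its small-square cellulation.

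To conclude, I would verify that this isomorphism must send original vertices to original vertices, where by \emph{original} I mean vertices coming from the big-square cellulation. Indeed, the link of every original vertex is isomorphic to $\G_d$, while each non-original vertex is either the midpoint of a big-square edge, whose link is a complete bipartite graph $K_{2,k}$, or the centre of a big square, whose link is a $4$-cycle. For $d \geq 2$, the graph $\G_d$ is neither a $4$-cycle nor of the form $K_{2,k}$, so the original vertices are distinguished by their links on each side. Consequently the isomorphism descends to an isomorphism of big-square cellulations $\widetilde{X_d} \cong \Sigma_d$, which is an isometry since both are metrised with each big square a unit Euclidean square. This contradicts Corollary~\ref{cor:not isometric}.

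The main obstacle will be the careful justification of the two competing descriptions of $\tilde{Y}$: the matching of the complex-of-groups universal cover with the topological universal cover relies on the triviality of $\mathcal{G}(Y)$, and the identification of this common cover with both $\widetilde{Z_d}$ and $\Sigma_d$ requires a precise invocation of covering theory for complexes of groups. Once this is in place, the link-based distinction between original and subdivision vertices is a routine structural observation about $\G_d$.
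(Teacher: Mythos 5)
Your reduction in the first three paragraphs is essentially correct: if such a $Y$ existed, the triviality of $\mathcal{G}(Y)$ would identify the topological universal cover of $Y$ simultaneously with $\widetilde{Z_d}$ (via condition~(1)) and with $\Sigma_d$ in the small-square cellulation (via condition~(2) and the development of $\mathcal{G}(Y)$ as a subgroup of $W_d$ acting on $\Sigma_d$), and your link-counting observation would then promote this to an isomorphism of the big-square cellulations, hence an isometry $\widetilde{X_d} \cong \Sigma_d$. This is exactly the observation the paper makes \emph{after} proving the proposition.

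The gap is that you then conclude by citing Corollary~\ref{cor:not isometric}, but in the paper that corollary is \emph{derived from} Proposition~\ref{prop:no covering}: the non-isometry of $\widetilde{X_d}$ and $\Sigma_d$ is not established independently anywhere, it is precisely the content that the proposition supplies. So the argument as written is circular. What is missing is the entire combinatorial heart of the proof: one must actually show that no square-complex isomorphism $\widetilde{Z_d} \cong \Sigma_d$ (or equivalently, no $Y$ satisfying (1) and (2)) can exist when $d > 2$. The paper does this by assigning two type systems to the vertices of $Y$ (types $V/E/F$ coming from $\Psi$, and types $T \in \cS$ coming from $\Phi$), using the number of small squares around a vertex to match them up, and then exploiting the specific shape of $\G_d$ — in particular that for $d>2$ the valences $2(d+1)$, $6$, and $4$ are distinct — to show that a vertex $\s_F$ would be forced to simultaneously carry type $\{0^+,d^+\}$ and $\{i^\pm,d^+\}$ with $1 \le i < d$, a contradiction. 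None of that structural argument appears in your proposal; you would need to supply it rather than appeal to the corollary.
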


Note that this statement does not require $Y$ to be finite.  This shows that, in particular, the universal cover $\widetilde{X_d}$ is not isometric to $\Sigma_d$, even though both are $\CAT(0)$ square complexes with all vertex links the graph $\G_d$.  After proving Proposition \ref{prop:no covering}, we discuss the possibility that $G_d$ has finite index subgroup $\pi_1(Y)$ and $W_d$ has finite index subgroup $\pi_1(Y')$ with $Y \neq Y'$ but $\pi_1(Y)$ isomorphic to $\pi_1(Y')$.

\begin{proof}  Assume by contradiction that there is a square complex $Y$ such that both (1) and (2) hold.  

We will assign a type to each vertex $\s$ of $Y$ in two different ways, and then establish the relationship between these types in Corollary~\ref{cor:types} below.  First, if $\Psi(\sigma)$ is respectively a vertex, midpoint of an edge or centre of a face of $X_d$, then $\s$ has type respectively $V$, $E$ or $F$.   The following lemma is immediate from the construction of $X_d$.

\begin{lemma}\label{lem:types VEF}
Let $\s$ be a vertex of $Y$. 
\begin{enumerate}
\item If $\s$ is of type $V$ then $\s$ is contained in $4d$ small squares.
\item If $\s$ is of type $E$ then $\s$ is contained in: \begin{enumerate}\item $2(d+1)$ small squares if $\Psi(\s)$ is the midpoint of $a_0$;  \item $6$ small squares if $\Psi(\s)$ is the midpoint of $a_i$ for $1 \leq i < d$; and \item $4$ small squares if $\Psi(\s)$ the midpoint of $a_d$.\end{enumerate}
\item If $\s$ is of type $F$ then $\s$ is contained in $4$ small squares.
\end{enumerate}
\end{lemma}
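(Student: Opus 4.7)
The plan is to exploit the hypothesis that $\Psi\colon Y\to Z_d$ is a combinatorial covering map. Since such a map is a local isomorphism of square complexes, the number of small squares of $Y$ meeting a vertex $\sigma$ equals the number of small squares of $Z_d$ meeting $\Psi(\sigma)$. The lemma therefore reduces to a direct count inside $Z_d$, using that $Z_d$ is obtained from $X_d$ by dividing each $2$-cell into four small squares.

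For a type $V$ vertex, $\Psi(\sigma)$ is the unique vertex $v$ of $X_d$. Every $2$-cell of $X_d$ has all four corners equal to $v$, so each of the $d$ original squares contributes exactly $4$ small squares containing $v$, giving $4d$. For a type $F$ vertex, $\Psi(\sigma)$ is the centre of a $2$-cell of $X_d$, which lies in precisely the $4$ small squares of the subdivision of that $2$-cell.

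For a type $E$ vertex, $\Psi(\sigma)$ is the midpoint of some edge $a_i$. Any midpoint of an edge $e$ of $X_d$ is contained in $2k$ small squares of $Z_d$, where $k$ is the number of sides of $2$-cells of $X_d$ labelled by $e$, counted with multiplicity. Reading off the relators $a_0a_1a_0^{-1}a_1^{-1}$ and $a_i^{-1}a_0a_ia_{i-1}^{-1}$ for $2\le i\le d$, I would tabulate: $a_0$ contributes $2$ sides to the first relator and $1$ side to each of the $d-1$ remaining relators, yielding $k=d+1$ and thus $2(d+1)$ small squares; $a_1$ contributes $2$ sides to the first relator and $1$ side to the $i=2$ relator, while for $2\le i<d$ the edge $a_i$ contributes $2$ sides to the $i$-th relator and $1$ side to the $(i+1)$-st, in both cases giving $k=3$ and hence $6$ small squares; and $a_d$ appears only in the $i=d$ relator, where it contributes $2$ sides, giving $k=2$ and hence $4$ small squares.

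There is no substantive obstacle here; the lemma is essentially bookkeeping. The only points requiring mild care are distinguishing $a_0$ (which appears in the boundary of every $2$-cell of $X_d$) from the $a_i$ with $i\ge 1$ (which each lie on the boundary of at most two $2$-cells), and the endpoint case $a_d$, whose count drops from $6$ to $4$ precisely because there is no relator indexed by $i=d+1$ to contribute an extra side.
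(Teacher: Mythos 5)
Your proof is correct and fills in exactly the bookkeeping that the paper leaves implicit: the paper's entire proof is the remark that the lemma ``is immediate from the construction of $X_d$.'' Reducing to a count in $Z_d$ via the local-isomorphism property of the combinatorial covering $\Psi$, and then tallying occurrences of each $a_i$ in the boundary words of the $d$ relators, is precisely the intended argument.
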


Second, let the covering
$\Phi:\mathcal{G}(Y) \to \mathcal{W}(K_d)$ be over the nondegenerate morphism $f:Y
\to K_d$, and assign type $T \in \cS$ to each vertex $\s \in f^{-1}(\s_T)$.  Recall that $\Sigma$ may be viewed as the universal cover of the complex of groups $\mathcal{W}(K_d)$.   Since there is a covering $\Psi:\mathcal{G}(Y) \to \mathcal{W}(K_d)$, it follows that the complex of groups $\mathcal{G}(Y)$ is induced by the action of its fundamental group $\pi_1(\mathcal{G}(Y))$, which is a subgroup of $W_d$, on its universal cover, which is $\Sigma$.

\begin{lemma}\label{lem:nontrivial}
Let $\s$ be a vertex of $Y$, of type $T \in \cS$.  Then the number of small squares containing $\s$ in $Y$ is equal to the number of small squares containing each lift of $\s$ in $\Sigma$.
\end{lemma}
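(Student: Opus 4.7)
My plan is to deduce the equality of local cell counts from the fact that a trivial complex of groups corresponds to a genuinely free action on the universal cover, so that the quotient map is an honest covering in the combinatorial sense.

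First I would unpack the covering-theoretic setup. Since $\mathcal{G}(Y)$ is a trivial complex of groups, it is in particular developable, and the covering $\Phi:\mathcal{G}(Y)\to\mathcal{W}(K_d)$ realises $H:=\pi_1(\mathcal{G}(Y))$ as a subgroup of $W_d=\pi_1(\mathcal{W}(K_d))$ acting on the common universal cover $\Sigma=\Sigma_d$, with $Y=H\backslash\Sigma$. By the general theory of complexes of groups induced by a group action (see~\cite{bridson-haefliger}, Chapter III.$\mathcal{C}$), the $H$-stabiliser of any lift $\tilde\sigma\in\Sigma$ of a vertex $\sigma\in Y$ is a conjugate of the local group $G_\sigma$ of $\mathcal{G}(Y)$.

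Next I would exploit triviality: since every local group of $\mathcal{G}(Y)$ is trivial, the $H$-stabiliser of every vertex of $\Sigma$ is trivial, i.e.\ $H$ acts \emph{freely} on $\Sigma$. Because $\Sigma$ is a CAT(0) cube complex and $H$ acts freely and cellularly (with respect to the cellulation by small squares), the quotient map $p:\Sigma\to Y$ is a combinatorial covering map of square complexes: it restricts to a bijection between a small open neighbourhood of any $\tilde\sigma$ and the corresponding neighbourhood of $\sigma=p(\tilde\sigma)$, sending small squares to small squares.

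Since the covering map $p$ restricts to a bijection on the star of each vertex, the number of small squares of $Y$ containing $\sigma$ equals the number of small squares of $\Sigma$ containing $\tilde\sigma$, and this count is independent of the choice of lift. The only step that requires genuine care is the appeal to the Bridson--Haefliger machinery to identify vertex stabilisers with local groups; once that is granted, the remainder of the argument is essentially formal covering-space reasoning.
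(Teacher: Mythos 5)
Your argument is correct and is essentially the paper's own proof, expanded: both deduce from triviality of the local groups that $\pi_1(\mathcal{G}(Y))$ acts freely on $\Sigma$, so the quotient map $\Sigma\to Y$ is a combinatorial covering, whence the local bijection on vertex stars gives the count. The paper states this in one sentence; you have just filled in the implicit developability and stabiliser-equals-local-group steps.
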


\begin{proof}  Since $\mathcal{G}(Y)$ is the trivial complex of groups, the quotient map $\Sigma \to Y$ induced by the action of $\pi_1(\mathcal{G}(Y))$ is a combinatorial covering map, and the result follows.
\end{proof}

Now let $\tau$ be a vertex of $\Sigma$ with the cellulation by small squares.  Then by construction:
\begin{itemize}
\item if $\tau$ has type $\emptyset$, then the link of $\tau$ is $\G_d$, and so $\tau$ is contained in $4d$ small squares;  \item if $\tau$ has type $T = \{ s \}$, and the panel containing $\tau$ is the star graph of valence $n$, equivalently the vertex $s$ of $\G_d$ has valence $n$, then $\tau$ is contained in $2n$ small squares; and \item if $\tau$ has type $T \in \cS$ with $|T| = 2$, then $\tau$ is contained in $4$ small squares.  
\end{itemize}

\begin{cor}\label{cor:types T}
Let $\s$ be a vertex of $Y$.
\begin{enumerate}
\item If $\s$ is of type $\emptyset$ then $\s$ is contained in $4d$ small squares.
\item If $\s$ is of type $T \in \cS$ with $|T| = 1$ then $\s$ is contained in: \begin{enumerate}\item $2(d+1)$ small squares if $f(\s) \in \{ s_{0^+}, s_{0^-}\}$;  \item $6$ small squares if $f(\s) \in \{ s_{i^+}, s_{i^-} \}$ for $1 \leq i < d$; and \item $4$ small squares if $f(\s) \in \{ s_{d^+}, s_{d^-}\}$.\end{enumerate}
\item If $\s$ is of type $T \in \cS$ with $|T| = 2$ then $\s$ is contained in $4$ small squares.
\end{enumerate}
\end{cor}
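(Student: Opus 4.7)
The proof of Corollary~\ref{cor:types T} is a direct application of Lemma~\ref{lem:nontrivial} together with the structural description of vertices in $\Sigma_d$ that is already recorded in the three bullet points immediately preceding the statement. My plan is as follows.

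First I would apply Lemma~\ref{lem:nontrivial} to replace the question ``how many small squares of $Y$ contain $\sigma$?'' with the equivalent question ``how many small squares of $\Sigma_d$ contain a lift $\tau$ of $\sigma$?''. Since $\mathcal{G}(Y)$ is the trivial complex of groups and $\Sigma_d$ is the universal cover of $\mathcal{W}(K_d)$, any lift $\tau$ of $\sigma$ has the same type $T \in \cS$ as $\sigma$. The three parts of the corollary then correspond to the three cases $|T| = 0, 1, 2$ already handled for $\Sigma_d$ in the bulleted list.

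For part~(1), a vertex of type $\emptyset$ in $\Sigma_d$ has link equal to $\G_d$, and the number of small squares containing it equals the number of edges of $\G_d$. A direct count from Figure~\ref{fig:poly-graph} gives $4d$ edges; equivalently, summing vertex valences yields $2(d+1) + 6(d-1) + 4 = 8d$, so $\G_d$ has $4d$ edges. Part~(2) follows from the ``$2n$'' formula recorded in the excerpt, once one reads off from $\G_d$ the valence $n$ of the vertex corresponding to $s$: the vertices $a_0$ and $b_0$ have valence $d+1$, the vertices $a_i$ and $b_i$ for $1 \leq i < d$ have valence $3$, and $a_d$ and $b_d$ have valence $2$. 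Substituting these into $2n$ gives the three subcases. Part~(3) is immediate, as any vertex with $|T| = 2$ lies at the centre of a big square and is therefore in exactly $4$ small squares.

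The argument is essentially bookkeeping; there is no substantial obstacle, only the combinatorial verification of the valences of vertices of $\G_d$, which is visible by inspection of the figure. The real purpose of the corollary is to furnish, when combined with Lemma~\ref{lem:types VEF}, the specific numerical constraints needed to derive a contradiction in the proof of Proposition~\ref{prop:no covering} for $d > 2$: one then compares the two independently determined counts of small squares at each vertex and checks that they cannot be reconciled.
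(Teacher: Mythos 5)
Your proposal is correct and matches the paper's (implicit) argument exactly: the paper states the corollary without a written proof, treating it as an immediate consequence of Lemma~\ref{lem:nontrivial} together with the three bullet points preceding the statement, which is precisely the reasoning you spell out. Your verification of the vertex valences of $\G_d$ ($d+1$, $3$, $2$) and the edge count $4d$ is the only bookkeeping the paper leaves to the reader, and you have done it correctly.
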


The relationship between the two type-systems is thus as follows.

\begin{cor}\label{cor:types} Let $\s$ be a vertex of $Y$.  Then:
\begin{enumerate}
\item  $\s$ has type $V$ if and only if it has type $\emptyset$;
\item  $\s$ has type $E$ if and only if it has type $T \in \cS$ with $|T| = 1$; and
\item  $\s$ has type $F$ if and only if it has type $T \in \cS$ with $|T| = 2$.
\end{enumerate}
\end{cor}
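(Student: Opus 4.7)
The plan is to combine the two square-count lemmas with the bipartite structure on $Y$ induced by each of the maps $\Psi$ and $f$; I assume $Y$ connected without loss of generality, since the argument applies to each component separately. For part~(1), Lemma~\ref{lem:nontrivial} implies that the number of small squares containing a vertex $\sigma$ is an invariant of $\sigma$, independent of type system. The possible values listed in Lemma~\ref{lem:types VEF} and Corollary~\ref{cor:types T} are $4d$, $2(d+1)$, $6$, and $4$; since $d > 2$ yields $4d > 2(d+1) > 6 > 4$, the value $4d$ is uniquely realized. It occurs exactly for type-$V$ vertices in the first system and exactly for type-$\emptyset$ vertices in the second, so these two classes of vertices coincide.

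For parts~(2) and~(3), I would exploit that each type system gives a bipartition of the vertices of $Y$. In $Z_d$, every edge of a small square joins a vertex of type $V$ or $F$ to one of type $E$, so $Z_d$ is bipartite with parts $\{V,F\}$ and $\{E\}$. Analogously, each edge of $K_d$ joins a vertex of type $\emptyset$ or of type $T \in \cS$ with $|T|=2$ to one of type $T \in \cS$ with $|T|=1$. Since $\Psi$ is a combinatorial covering and $f$ is a nondegenerate morphism, both send edges to edges, so both bipartitions pull back to $Y$. On the connected bipartite graph $Y$ the bipartition is unique up to swap, so the two pullback partitions of the vertex set coincide as unordered partitions. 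By part~(1), type-$V$ and type-$\emptyset$ vertices are the same, so the side of the bipartition containing them is the same in both systems. This forces the set of type-$V$ or type-$F$ vertices to equal the set of vertices whose $T$-type has cardinality $0$ or $2$, and likewise the type-$E$ vertices are exactly those of type $T$ with $|T|=1$. Combined with part~(1), this yields $E$ iff $|T|=1$ and $F$ iff $|T|=2$, proving (2) and (3).

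The main obstacle is that the square counts alone cannot distinguish type-$F$ from type-$E$ vertices whose image under $\Psi$ is the midpoint of $a_d$, nor vertices of type $T$ with $|T|=2$ from those of type $T$ with $|T|=1$ satisfying $f(\sigma) \in \{s_{d^+}, s_{d^-}\}$; all of these yield count $4$. The bipartite argument resolves this by using that both type systems are compatible with the same edge structure on $Y$, which forces them to determine the same bipartition up to the swap pinned down by part~(1).
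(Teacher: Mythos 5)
Your proposal is correct. Part~(1) is the same as in the paper. For parts~(2) and~(3), the paper argues locally: it notes that vertices of types $V$ and $F$ are never adjacent while every type-$E$ vertex is adjacent to at least one type-$V$ vertex (and similarly for the types $T \in \cS$), and then traces adjacencies directly through part~(1) to match the two classifications. You instead package those same adjacency facts as a global statement: both type systems induce the same bipartite structure on the square graph, a connected bipartite graph has a unique bipartition up to swap, and part~(1) pins down the swap. The underlying combinatorial facts about $Z_d$ and $K_d$ (namely, that small-square edges always join $\{V,F\}$-type to $E$-type, and $\{\emptyset\}\cup\{|T|=2\}$-type to $|T|=1$-type) are identical; your version trades the explicit ``adjacent to at least one $V$'' chase for the uniqueness-of-bipartition principle plus the componentwise reduction. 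This is a clean alternative framing, and your closing remark correctly identifies the point where the square counts alone fail (the ambiguous count $4$) and why the bipartition argument resolves it.
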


\begin{proof}  
Part (1) is immediate from Lemma~\ref{lem:types VEF} and Corollary~\ref{cor:types T}.  Parts (2) and (3) follow from these results, together with the observations that vertices of types $V$ and $F$ are never adjacent while every vertex of type $E$ is adjacent to at least one vertex of type $V$, and similarly for the types $T \in \cS$.
\end{proof}

To complete the proof of Proposition~\ref{prop:no covering}, fix a vertex $\sigma$ in $Y$ of type $V$, equivalently of type $\emptyset$, and consider the set of vertices of $Y$ which are adjacent to $\sigma$.   These vertices are all of type $E$, and so by part (2) of Lemma~\ref{lem:types VEF} they are all of type $T \in \cS$ with $|T| = 1$.  Since $f$ is a nondegenerate morphism and $G_\s$ is trivial, the restriction of $f$ to the set of edges of $Y$ which are incident to $\s$ is a bijection to the set of edges of $K_d$ which are incident to $\s_\emptyset$, and this bijection of edges induces a bijection from the set of vertices adjacent to $\s$ to the set of vertices adjacent to $\s_\emptyset$.  In particular, for each vertex adjacent to $\s$, there is a unique edge of $Y$ containing both $\s$ and this vertex.  By definition of types $T \in \cS$, it also follows that no two vertices adjacent to $\s$ have the same type $T \in \cS$.

Therefore, by part (2)(c) of Corollary~\ref{cor:types T}, there are exactly $2$ vertices adjacent to $\s$ which are contained in exactly $4$ small squares of $Y$, and we may denote these vertices by $\s_{d^+}$ and $\s_{d^-}$ where without loss of generality $f(\s_{d^+}) = s_{d^+}$ and $f(\s_{d^-}) = s_{d^-}$.  Similarly, since $d > 2$, there are exactly $2$ vertices adjacent to $\s$ which are contained in exactly $2(d+1)$ small squares of $Y$, and we may denote them by $\s_{0^+}$ and $\s_{0^-}$, such that $f(\s_{0^+}) = s_{0^+}$ and $f(\s_{0^-}) = s_{0^-}$.   

By part (2) of Corollary~\ref{cor:types} and part (2)(c) of Lemma~\ref{lem:types VEF}, $\Psi(\s_{d^+})$ is the midpoint of the edge $a_d$ of $X_d$.   Now there is a unique edge of $Y$ containing both $\s$ and $\s_{d_+}$, and so as $\Psi$ is a combinatorial covering map, there is a unique vertex $\s'$ in $Y$ which is of type $V$, is adjacent to $\s_{d^+}$ and is not equal to $\s$.   There are thus $2$ small squares in $Y$ which contain both $\s_{d^+}$ and $\s'$. 

Let $\s_E$ be the unique vertex in $Y$ of type $E$ such that $\s_E$ and $\s_{d^+}$ are both adjacent to $\s'$, and $\s_E$, $\s_{0^+}$ and $\s_{d^+}$ are all adjacent to the same vertex, say $\s_F$, of type $F$.  Now, by Lemma~\ref{lem:types VEF}, we have that $\Psi(\s_{0^+})$ is the midpoint of $a_0$ and $\Psi(\s_{d^+})$ is the midpoint of $a_d$.  Thus $\Psi(\s_F)$ is the centre of the only big square in $X_d$ with both $a_0$ and $a_d$ as edges, namely the square with boundary label $a_d^{-1}a_0 a_{d} a_{d-1}^{-1}$.  Therefore $\Psi(\s_E)$ is the midpoint of $a_{d-1}$.  Thus the vertex $\s_E$ is contained in $6$ small squares, and so by part (2)(b) of Corollary~\ref{cor:types T}, since $d > 2$ we have $f(\s_E) \in \{ s_{i^+}, s_{i^-} \}$ for some $1 \leq i < d$.  That is, $\s_E$ is of type $i^+$ or $i^-$ for some $1 \leq i < d$.

By part (3) of Corollary~\ref{cor:types}, the vertex $\s_F$ has type some $T \in \cS$ with $|T| = 2$.  Since $\s_F$ is adjacent to vertices of types $0^+$ and $d^+$, the vertex $\s_F$ is of type $\{0^+,d^+\}$.  But $\s_F$ is also adjacent to vertices of types $d^+$ and $i^\pm$ with $1 \leq i < d$, so $\s_F$ is of type $\{i^\pm, d^+\}$.  This is impossible.  Therefore there is no square complex $Y$ such that both (1) and (2) of Proposition~\ref{prop:no covering} hold.
\end{proof}

For $d > 2$ we do not know if $G_d$ and $W_d$ are commensurable, or even quasi-isometric.  If they are commensurable, then there are finite square complexes $Y$ and $Y'$ with isomorphic fundamental groups, such that there is a combinatorial covering map $Y \to Z_d$ and a covering of complexes of groups $\mathcal{G}(Y') \to \mathcal{W}(K_d)$, with $\mathcal{G}(Y')$ the trivial complex of groups over $Y'$ by Lemma~\ref{lem:no torsion}.  By Proposition~\ref{prop:no covering}, we know that $Y \neq Y'$ and that the universal covers of $Y$ and $Y'$ are not isometric.  Hence if there is some Mostow-type rigidity result which implies that the isomorphism $\pi_1(Y) \cong \pi_1(Y')$ is induced by an isometry of universal covers, we would obtain that $W_d$ and $G_d$ are not in fact commensurable.  However, the only Mostow-type  rigidity results  for $\CAT(0)$ square complexes that we know of are Theorem 1.4.1 of~\cite{burger-mozes-zimmer}, for certain uniform lattices on products of trees, and Corollary 1.8 of~\cite{bestvina-kleiner-sageev-RAAG}, concerning right-angled Artin groups, and neither of these results can be applied here.

\bibliographystyle{siam}
\bibliography{refs}

\end{document}